\newcolumntype{M}[1]{>{\centering\arraybackslash}m{#1}}
\newcolumntype{L}[1]{>{\raggedright\arraybackslash}p{#1}}
\newcolumntype{C}[1]{>{\centering\arraybackslash}p{#1}}
\newcolumntype{R}[1]{>{\raggedleft\arraybackslash}p{#1}}
\newtheorem{theorem}{Theorem}
\newtheorem{corollary}[theorem]{Corollary}
\newtheorem{lemma}[theorem]{Lemma}
\newtheorem{remark}{Remark}
\theoremstyle{definition}
\newtheorem{definition}[theorem]{Definition}
\newcommand{\be}{\begin{equation}}
\newcommand{\bel}[1]{\begin{equation}\label{#1}}
\newcommand{\ee}{\end{equation}}
\newcommand{\barr}{\begin{eqnarray}}
\newcommand{\earr}{\end{eqnarray}}
\newcommand{\bars}{\begin{eqnarray*}}
\newcommand{\ears}{\end{eqnarray*}}
\newtheorem{subn}{\name}
\newcommand{\bsn}[1]{\def\name{#1}\begin{subn}}
\newcommand{\esn}{\end{subn}}
\newtheorem{sub}{\name}[section]
\newcommand{\bs}{\begin{sub}}
\newcommand{\es}{\end{sub}}
\newcommand{\bth}[1]{\def\name{Theorem}
\begin{sub}\label{t:#1}}
\newcommand{\blemma}[1]{\def\name{Lemma}
\begin{sub}\label{l:#1}}
\newcommand{\bcor}[1]{\def\name{Corollary}
\begin{sub}\label{c:#1}}
\newcommand{\bdef}[1]{\def\name{Definition}
\begin{sub}\label{d:#1}}
\newcommand{\bprop}[1]{\def\name{Proposition}
\begin{sub}\label{p:#1}}
\newcommand{\BA}{\begin{array}}
\newcommand{\EA}{\end{array}}
\newcommand{\BAN}{\renewcommand{\arraystretch}{1.2}
\setlength{\arraycolsep}{2pt}\begin{array}}
\newcommand{\BAV}[2]{\renewcommand{\arraystretch}{#1}
	\setlength{\arraycolsep}{#2}\begin{array}}
\newcommand{\BSA}{\begin{subarray}}
	\newcommand{\ESA}{\end{subarray}}
\newcommand{\BAL}{\begin{aligned}}
	\newcommand{\EAL}{\end{aligned}}
\newcommand{\BALG}{\begin{alignat}}
	\newcommand{\EALG}{\end{alignat}}
\newcommand{\BALGN}{\begin{alignat*}}
	\newcommand{\EALGN}{\end{alignat*}}
\def\angb<#1>{\langle #1 \rangle}
\def\R{\mathbb{R}}
\numberwithin{equation}{section}
\theoremstyle{definition}
\newenvironment{formula}[1]{\begin{equation}\label{eq:#1}}
	{\end{equation}\noindent}
\def\Fi#1{\begin{formula}{#1}}
	\def\Ff{\end{formula}\noindent}
\title[]{Liouville--Type Results for Infinity Elliptic Equations Involving Gradient and Hardy--H\'enon Nonlinearities}
\author[$1$]{Tan-Dat Khuu} \let\Author\@author
\author[$2$]{Trung-Hieu Huynh} 
\author[$3,*$]{Hoang-Hung Vo} 
\affil[$1$]{Faculty of Mathematics and Statistics, Ton Duc Thang University, 19 Nguyen Huu Tho, Tan Hung, Ho Chi Minh City, Vietnam.}
\affil[$2$]{Department of Mathematics, Ho Chi Minh University of Education, 280 An Duong Vuong, Cho Quan, Ho Chi Minh City, Vietnam.}
\affil[$3$]{Faculty of Mathematics and Applications, Saigon University, 273 An Duong Vuong, Cho Quan, Ho Chi Minh City, Vietnam}
\begin{document}
	
	\date{\today}
	\subjclass[2020]{35J60 (primary); 35B65, 35J70 (secondary).}
	
	\keywords{Infinity Laplace, Viscosity solution, Comparison Principle, Locally Lipschitz estimate, Liouville properties.}
	
	\maketitle
	\begin{adjustwidth}{1.5cm}{1.5cm}
		\begin{center}
			
			\let\thefootnote\relax\footnotetext{\textit{$^{1}$ Email address: tandatkhuu2k3@gmail.com}}
			\let\thefootnote\relax\footnotetext{\textit{$^2$Email address: huynhhieu2004@gmail.com}}											
			\let\thefootnote\relax\footnotetext{\textit{$^{3,*}$Corresponding author. Email address: vhhung@sgu.edu.vn}}
			
			\Author
		\end{center}
	\end{adjustwidth}
	
	\begin{abstract}
		In this paper, we investigate Liouville properties for  degenerate elliptic equations involving the infinity Laplace operator with nonlinear lower-order terms, of the form
		\begin{align*}
			\Delta_\infty^\beta u - cH(u,\nabla u) - \lambda f(|x|,u) = 0 \text{ in } \mathbb R^n,
		\end{align*}
		where $\beta\in [0,2], \Delta_\infty^\beta$ denotes the fractional infinity Laplacian, and the nonlinear terms $H$ and $f$ are Hamiltonian and Hardy-Hénon type models, respectively. Our work extends existing Liouville frameworks for the classical and normalized infinity Laplacian by developing a new weighted comparison principle and locally Lipschitz regularity estimates. To obtain Liouville type theorems, we establish growth conditions for bounded nonnegative viscosity solutions when $f$ corresponds to increasing power-type nonlinearities, that is, when $f$ grows as $u^\gamma$. Furthermore, we analyze the case of an exponential nonlinearity $f$, which grows as $e^u$, showing that this problem exhibits a strongly supercritical behavior; under suitable growth assumptions on $u$ at infinity, only partial Liouville-type conclusions can be obtained. Our techniques combine radial analysis, barrier constructions, and refined comparison arguments, providing a unified framework connecting regularity, comparison principle, and Liouville properties.	
	\end{abstract}
	
	\tableofcontents

	\section{\bf Introduction}
	\label{sec: 1}
	
	The infinity Laplace operator is a second-order nonlinear PDE operator that describes the functional behavior of functions minimizing the Lipschitz norm. Formally, it is defined by
	\begin{align*}
		\Delta_{\infty}u = \langle D^2u\,\nabla u, \nabla u \rangle 
		= \sum_{i,j=1}^{n} \frac{\partial u}{\partial x_i}\frac{\partial^2 u}{\partial x_i \partial x_j}\frac{\partial u}{\partial x_j},
	\end{align*}
	where $\nabla u$ and $D^2u$ denote the gradient and the Hessian matrix of $u$, respectively. The corresponding equation
	\begin{align*}
		\Delta_{\infty}u = 0,
	\end{align*}
	is known as the infinity Laplace equation, which governs the class of $\infty$-harmonic functions. The notion of infinity Laplacian originally comes from the pioneering works of Aronsson \cite{A1965,A1966,A1967} in the 1960s to study \textit{absolutely minimizing Lipschitz} functions. More precisely, the infinity Laplacian equation arises as the Euler-Lagrange equation associated with the $L^{\infty}$-variational problem
	\begin{align*}
		\min_{u} \|\nabla u\|_{L^{\infty}(\Omega)},
	\end{align*}
	which describes absolute minimizers for the supremal functional $I(u) = \|\nabla u\|_{L^{\infty}(\Omega)}$ with prescribed boundary data. Notably, Jensen in \cite{J1973} proved the uniqueness of \textit{absolutely minimal Lipschitz extensions} by considering viscosity solutions to the infinity Laplacian equation, which has motivated many researchers to study such equations. Infinity Laplacian also arises in stochastic tug-of-war games \cite{YOSD2009}, in which two players try to move a token in an open set $U$ toward a favorable spot on the boundary $\partial U$ corresponding to a given payoff function $g$ on $\partial U$. It is worth noting that the payoff is not necessarily fixed; specifically, it may change each time the token is moved, which can be controlled by probabilistic methods (see \cite{MT2017}). Additionally, other applications of infinity Laplacian can be found in mass transfer problems \cite{LW1999} and image processing \cite{VJC1998}. 
	
	In recent years, attention has shifted toward more general and hybrid operators combining $\Delta_{\infty}$ with nonlinear gradient and absorption terms, which appear naturally in models of optimal control, stochastic games, and reaction-diffusion phenomena. A particularly interesting class of such operators is given by
	\begin{align*}
		\Delta_\infty^\beta u = \frac{1}{|\nabla u|^\beta}\Delta_{\infty}u= \frac{1}{|\nabla u|^\beta}\sum_{i,j = 1}^{n}\frac{\partial u}{\partial x_i}\frac{\partial^2 u}{\partial x_i \partial x_j}\frac{\partial u}{\partial x_j} \mbox{ for all } \beta \in [0,2].
	\end{align*}
	This operator interpolates between the classical infinity Laplacian for $\beta = 0$ and the normalized infinity Laplacian for $\beta = 2$, providing a useful framework for multi-scale optimization and degenerate diffusion problems. In addition, the combination with Hamiltonian terms and Hardy-Hénon type nonlinearities leads to rich mathematical structures involving nonhomogeneous weights and singular absorption-reaction balances.
	
	In this paper, we aim to study Liouville theorems, addressing the nonexistence or existence of positive solutions to the nonlinear equations
	\begin{align}\label{main}
		\Delta_\infty^\beta u - cH(u,\nabla u) - \lambda f(|x|,u) = 0 \text{ in } \mathbb R^n,
	\end{align}
	where $\beta\in [0,2]$ and $\lambda > 0$. Here $c$ is chosen for each model of $H(u,\nabla u)$ so that $cH(u,\nabla u)\ge0$ for all relevant $H(u,\nabla u)$; in particular, if $H$ is negative, $c$ is also taken to be negative, and the nonlinearities are modeled by 
	\begin{align}\label{Hf}
		H(u,\nabla u) = \left\{
		\begin{array}{ll}
			~~~~~|\nabla u|^m,\\
			-u^q|\nabla u|^m, \\ 
			~~u^q |\nabla u|^m,  
		\end{array}
		\right. \mbox{ and }f(|x|,u) = \left\{
		\begin{array}{ll}
			|x|^\alpha(u^+)^\gamma,\\
			(1+|x|^2)^{-\alpha}u^{\gamma},\\
			|u|^{\gamma} e^u. 
		\end{array}
		\right.
	\end{align}
	where $m,q$ are positive constants and the principal objective is to identify optimal relations among the parameters $\gamma, \alpha$ that ensure nonnegative viscosity solutions of \eqref{main} satisfying suitable conditions are trivial ($u \equiv 0$), thereby establishing a unified Liouville property for this broad class of fractional infinity Laplacian equations.
	
	Our study is inspired by the series of recent developments in the theory of Liouville-type theorems for infinity Laplace equations and their extensions. As is known, Liouville properties play a fundamental role in the qualitative study of nonlinear partial differential equations due to their crucial role in providing  information about natural phenomena \cite{BHN2010,BH2012} and regularity theory \cite{CEF2001,AAE2025,Armstrong2011,L2014}. This line of research has developed rapidly over the years for a variety of different equations involving fully nonlinear elliptic operators. Related publications include works on symmetry and overdetermined problems for the infinity-Laplacian \cite{BK2011,CF20151}, inhomogeneous infinity Laplacian \cite{AH2022,CF20152,LW20081,LW20082}, H\"ormander operators \cite{BMP2025}, and truncated Laplacian \cite{BGI2021}. For the standard $\infty$-Laplace equations, Crandall, Evans and Gariepy \cite{CEF2001} showed that any infinity harmonic supersolutions which are bounded below are necessarily constant. Further well-known results were proved by Araújo, Leitão, and Teixeira in  \cite{ALT2016}, concerning the Liouville properties for the infinity Laplace equations with strong absorptions
	\begin{align}
		\label{theo5.1.1}
		\Delta_\infty u = \lambda (u^+)^\gamma\mbox{ in } \mathbb{R}^n,
	\end{align} 
	where $\lambda > 0$ and $0 \leq \gamma < 3$. More precisely, if $u$ is a viscosity solution to \eqref{theo5.1.1} and satisfies the following growth condition
	\begin{align*}
		\limsup_{|X|\to\infty} \frac{u(X)}{|X|^{\frac{4}{3-\gamma}}} < \sqrt[3-\gamma]{ \lambda \cdot \frac{(3-\gamma)^4}{64(1+\gamma)}}, 
	\end{align*}
	then $u \equiv 0$. Turning to the combination of the infinity Laplacian with gradient, this formulation stems from tug-of-war games (see \cite{YOSD2009}). We mention \cite{BGI2021,Armstrong2011,P2011,AH2022} as works related to this problem. The Liouville theorems for fractional infinity Laplacian were first proposed by Lu and Wang \cite{LW20081,LW20082}. These authors applied the standard viscosity solution techniques, including comparison principles and Perron's methods, to establish both the existence and uniqueness of solutions to
	\begin{align*}
		\Delta_{\infty}^{\beta}u=f(x)\text{ in }\Omega,
	\end{align*}
	where $\Omega$ is a bounded domain, $\beta\in \{0,2\}$ with Dirichlet boundary condition, provided the data $f$ has a constant sign. Recently, Biswas and Vo in \cite{AH2020} established Liouville results for fractional infinity Laplacian equations involving gradient. Their result asserts that any supersolutions $u$ of
	\begin{align*}
		\Delta_{\infty}^{\beta}u=c|\nabla u|^{4-\beta}\text{ in }\mathbb R^n,
	\end{align*}
	with $c\geq 0$, which are bounded below are necessarily constant. 
	
	In addition to the fractional infinity Laplacian, Hardy-Hénon nonlinearities are also a notable aspect in our study. Over the past few decades, there have been several works on Liouville results for Hardy-Hénon type equations. For instance, the works \cite{PS2012,DQ2020,V2017,CY2021} proposed Liouville type results for Hardy-Hénon equations involving Laplace operators. Furthermore, the nonexistence and existence of solutions to higher-order Hardy-Hénon equations have also captured much attention. In the recent work of Chen, Dai, and Qin \cite{CDQ2023}, the authors studied Liouville-type theorems, a priori estimates, and existence results for critical and super-critical order Hardy-Hénon type equations of the form
	\begin{align}\label{1.4}
		(-\Delta)^{m}u=|x|^{\sigma}u^p\text{ in }\mathbb R^n,
		\end{align}
	where $m\in (0,1],\sigma>-2m,$ and $p>1$. In addition, we mention Giga and Ngô \cite{NY2022}, who established Liouville properties for the problem \eqref{1.4} with $m\geq 2, \sigma>-2m$, and $p>1$. However, to the best of our knowledge, there are not many well-known publications concerning this class of equations for the infinity-Laplacian, only \cite{JSNS2024} has treated it with the infinity Laplacian. It motivates us to consider these equations with the fractional infinity-Laplacian, aiming to generalize known results.
	
	Furthermore,  we prove Liouville results for a class of generalized Lane-Emden-Matukuma (LEM) equations, which can be informally considered the non-singular counterpart to the Hardy-Hénon class, as it involves the nonlinearity $f$ of the form $(1+|x|^2)^{-\alpha}u^\gamma$. In the case $\alpha=0$, this equation reduces to the standard Lane-Emden equation. On the other hand, if $\alpha=1$, it becomes the Matukuma equation, which was proposed by the astrophysicist Takehiko Matukuma in 1930 in \cite{M1930}, who, based on his physical intuition, introduced it as a mathematical model to describe the dynamics of globular clusters of stars, see also \cite{M1938}. Consequently, Lane-Emden and Matukuma equations have been extensively investigated by mathematicians as they describe several phenomena in astrophysics and mathematical physics, see \cite{M1930,L1993,Tinh25_1,Tinh25_2,Tinh25_3,LL2023} and references therein for further details. It is remarkable that LEM equations have been comprehensively studied in \cite{Tinh25_2,Tinh25_3} for all positive values of the parameter $\alpha$. Our results in this article regarding these equations also cover a broad range of $\alpha$, including the case $\alpha\in (0,1)$ in \cite{Tinh25_2} and a part of the regime $\alpha>1$ in \cite{Tinh25_3}.  
	
	For exponential nonlinearity (the third type of nonlinearity above), equations involving this type of nonlinearity arise in the theory of gravitational equilibrium of polytropic stars, see \cite{CH1957,JL1972,EG1907}. Therefore, Liouville results for such equations have attracted many researchers over the years. Let us list some typical publications in this field. Firstly, Alarcón, Melián, and Quaas \cite{Quaas2016} established optimal Liouville theorems for supersolutions of the Laplace equation involving exponential nonlinearity, particularly having the form
	\begin{align*}
	-\Delta u=e^u \text{ in an exterior domain of } \mathbb R^n.
	\end{align*}
	Moreover, \cite{Quaas2015} studied Liouville results for the Laplace equation with gradient terms. Additionally, Wang and Ye \cite{WY2012} proposed Liouville type results for the Hénon elliptic problem 
	\begin{align*}
		-\Delta u=|x|^{\alpha}e^u\text{ in } \Omega\subset\mathbb R^n,
	\end{align*}
	whenever $\alpha>-2$ and $n\geq 2$. In this article, we introduce a new result for this class of nonlinearity in the fractional infinity Laplacian framework, which contributes to extending the applicable scope of exponential nonlinearity.
	
	Motivated by these results, we investigate the analogous conditions under which the equation \eqref{main} admits only trivial viscosity solutions. The structure of Hamiltonians in the equation \eqref{main} is inspired by the models considered in \cite{AAE2025}, where nonlocal diffusion and gradient nonlinearities were analyzed in the context of fractional Pucci-type nonlinear operators. Our first contribution is to develop the idea in \cite[Theorem 5.1]{ALT2016} to cover increasing power-type nonlinearities and to include the fractional infinity Laplacian with Hamiltonians above. The fundamental tool of this approach is the comparison principle, which was first applied in \cite{CEF2001} and later enhanced in \cite{Armstrong2011} and \cite{AH2020} for equations with gradient terms and in \cite{MT2017} for weakly coupled systems. It is worth mentioning that we obtain a new Liouville result for equation \eqref{main} with the LEM nonlinearity (type II), for which the methods used in \cite{Tinh25_1,Tinh25_2,Tinh25_3} cannot be applied. Furthermore, the exponential-type nonlinearity $|u|^{\gamma} e^u$ exhibits a much stronger supercritical growth, rendering comparison-principle methods for Liouville-type results inapplicable. Therefore, we overcome this difficulty by adopting the doubling method from  \cite{AAE2025}, which was used there to handle nonlocal equations with fractional Pucci-type operators. This method is widely used in the theory of viscosity solutions to nonlinear equations and we suitably adapt it to obtain regularity and Liouville-type results for these equations.
	
	Regarding regularity, it is one of the primary challenges when studying the infinity Laplace equations. The best regularity results until now are $C^{1,\alpha}$ for infinity harmonic functions in planes proposed by Evans and Savin \cite{ES2008}, and everywhere differentiability for infinity harmonic functions in $\mathbb R^n$, with $n\geq 3$ due to Evans and Smart \cite{ES2011}. For the fractional infinity Laplace equations, Biswas and Vo achieved a notable regularity result in \cite[Lemma 2.2]{AH2020}, namely the locally Lipschitz regularity for solutions to KPP type equations with gradient terms. We extend their ideas to Hardy-Hénon nonlinearities mentioned above, proving in Lemma \ref{lipschitz1} that solutions of \eqref{main} are locally Lipschitz continuous under mild structural conditions on $H$ and $f$. The comparison principle developed in this paper is also influenced by the approaches of \cite[Theorem 2.3]{AH2020}, which combined viscosity solution techniques with carefully designed coupling functions to deal with the degeneracy of the fractional infinity Laplacian and the nonlinearity of gradient terms.
	
	Our weighted version adapts these arguments to the presence of the Hardy-Hénon term $|x|^\alpha$, for which we impose conditions inspired by the recent work of Bezerra Júnior et al. \cite{JSNS2024} to ensure the consistency of the weight in the elliptic setting. Following the weighted Hardy-Hénon framework in \cite{JSNS2024}, we assume that the function $f(|x|,u)$ satisfies the structural growth condition
	\begin{align*}
		f(|x|,1) = \mathcal{H}(x),
	\end{align*}
	where $\mathcal{H}$ acts as a type of weight in our diffusion model and satisfies the following growth condition: there exist positive constants $a_1, a_2$ such that  
	\begin{align*}
		a_1 |x|^\alpha \leq \mathcal{H}(x) \leq a_2 |x|^\alpha.
	\end{align*}  
	This condition allows the exponent $\alpha$ to take negative values, thus including a broader class of Hardy-Hénon. 
	In summary, our contributions can be outlined as follows:
	\begin{itemize}
		\item We prove local Lipschitz regularity estimates for viscosity solutions (Lemma \ref{lipschitz1}), generalizing the techniques of \cite{AH2020} to a broader range of nonlinearities.
		\item We develop comparison principles (Theorem \ref{Comp.Prin}) that unify the approaches of \cite{AH2020,ALT2016} and adapt them to weighted settings inspired by \cite{JSNS2024}.
		\item We establish Liouville-type theorems for certain specific forms of Hardy-Hénon equations, extending the classical absorption framework of \cite{ALT2016} to the fractional infinity Laplacian with gradient (see Theorem \ref{theo: Liouville-type result I}, \ref{theo: Liouville-type result II}, \ref{theo: Liouville-type result - III}) and we partially address the case of an exponential nonlinearity $f(|x|,u)=|u|^{\gamma}e^u$ (Theorem \ref{cxmodel}).
	\end{itemize}
	For clarity, Table \ref{Table_1} provides a summary of the methods (Radial Analysis and Doubling Method), structural conditions, and specific barrier parameters, such as the exponents $p$, scaling constants $\tau$, and dead-core thickness $T$, whose explicit values are calculated in Section \ref{sec: 4}.
	\begin{table}[H]
		\centering
		\normalsize                       
		\setlength{\tabcolsep}{4pt}          
		\renewcommand{\arraystretch}{1.2}    
		\resizebox{1\textwidth}{!}{
			\begin{tabular}{|M{5.5cm}|M{2cm}|M{6cm}|M{4cm}|M{3cm}|M{5cm}|}
				\hline
				Model problem &Method &$h \mbox{ and } T$ & $p$ (or $p^*$) &
				$\tau$ (or $\tau^*$) & Structural assumptions \\
				\hline
				$\Delta_\infty^\beta u=cH(u,\nabla u)+\lambda |x|^\alpha (u^{+})^\gamma$
				&
				\vspace*{3cm}
				Radial Analysis
				&
				\vspace*{-0.3cm}
				\hspace*{-0.3cm}$
				\begin{array}{ll}
					&\bullet \quad \displaystyle h(s)=\tau s^{p}\\[10pt] 
					&\bullet \quad T =\displaystyle\sqrt[1/p]{\dfrac{d}{\tau}}
				\end{array}
				$
				&
				\vspace*{0.2cm}
				\makecell{$\min\{p_1,p_2\}$,\\[3pt] where $p_1,p_2$\\[3pt] are given in \\\eqref{cond_2}}
				\vspace*{0.1cm}
				&
				Given in \eqref{cond_2} 
				&
				\vspace*{0.1cm}
				\makecell{$0<m<3-\beta$,\\ $m + q + \beta < 3$,\\$0\le \gamma < 3-\beta$,\\ $-1-\gamma<\alpha$}\vspace*{0.1cm}
				\\ \cline{1-1}\cline{3-5}\cline{6-6}
				$\Delta_\infty^\beta u = cH(u,\nabla u) + \lambda\left(1 + |x|^2\right)^{-\alpha}u^{\gamma}$
				&
				&
				\vspace*{-0.5cm}
				\makecell{
					\hspace*{-0.2cm}$
					\begin{array}{ll}
						&\bullet \quad \displaystyle h(s)=\chi_{\{p_2^*>p_1^*\}}\tau^*s^{p^*},\\[5pt]
						&\mbox{where } \chi_{\{p_2^*>p_1^*\}} \mbox{ is given in } \eqref{5'}\\[10pt] 
						&\bullet \quad T \text{ solves }\eqref{1'}
					\end{array}
					$
				}
				&
				\vspace*{0.2cm}
				\makecell{$\min\{p_1^*,p_2^*\}$\\[3pt] where $p_1^*,p_2^*$\\[3pt] are given in \\[3pt] \eqref{gradient_term_1} and \eqref{cond:r_finite},\\[3pt]respectively}
				\vspace*{0.1cm}
				&
				Given in \eqref{cond_3}
				&
				\makecell{$0<m<3-\beta$,\\$m + q + \beta < 3$,\\$0\leq\gamma < 3-\beta$,\\ $0\leq\alpha<\frac{(4-\beta-m)(3-\beta-\gamma)}{2(3-\beta-m)}$}
				\vspace*{0.1cm}
				\\ \cline{1-6}
				$\Delta_{\infty}^{\beta}u=a(x)(|x|+1)^{\alpha}|\nabla u|^m+\lambda|u|^{\gamma}e^u$
				&
				Doubling method
				&
				&
				&
				&
				\vspace*{0.2cm}\makecell{$0< m \leq 2-\beta$,\\$0 \leq \gamma$,\\ $-1< \alpha \leq 0$,\\ $\displaystyle \inf_{\mathbb{R}^n}a > 0$}
				\vspace*{0.1cm}\\ 
				\hline
			\end{tabular}
		}
		\caption{Summary of proof methods, key parameters, and structural assumptions for the considered models.}
		\label{Table_1}
	\end{table}
	The rest of the paper is organized as follows. In Section \ref{sec: 2}, we recall the necessary preliminaries and viscosity framework. Section \ref{sec: 3} is devoted to locally Lipschitz estimates and a comparison principle, along with existence and stability results. In Section \ref{sec: 4}, we will analyze the radial boundary value problem. Finally, Section \ref{sec: 5} presents the main Liouville-type theorems along with discussions about the optimality of our results.
	
	
	\section{\bf Preliminaries}
	\label{sec: 2}
	In this section, we introduce some notation, fundamental definitions, as well as several tools to facilitate the proofs of the main results.
	
	Let $\Omega$ be a domain in $\mathbb{R}^n$; we denote by $\partial \Omega$ its boundary.
	We use $B_r(x_0)$ to denote the open ball of radius $r > 0$ centered at $x_0 \in \mathbb{R}^n$, i.e., $B_r(x_0) = \left\{y \in \mathbb{R}^n,|y - x_0| < r \right\}$. When $x_0 = 0$, this ball is denoted by $B_r$. The notation $u \prec_z \varphi$ signifies that $\varphi$ touches $u$ from above exactly at the point $z$, i.e., for some open ball $B_r(z)$ around $z$ we have $u(x) < \varphi(x)$ for $x \in B_r(z) \backslash \{z\}$ and $u(z) = \varphi(z)$. Similarly, $u\succ_z \varphi$ denotes that $\varphi$ touches $u$ from below exactly at the point $z$. Moreover, for any $\Omega \subset \mathbb{R}^n$, we now define the notions of viscosity subsolution and supersolution. It will be useful to have the notations
	\begin{align*}
		&\mbox{USC}(\Omega) = \left\{u:\Omega \to \mathbb{R}; ~u\text{ is upper semicontinuous}\right\},\\
		&\mbox{LSC}(\Omega) = \left\{u:\Omega \to \mathbb{R}; ~u \text{ is lower semicontinuous}\right\}.
	\end{align*}
	To state the results in a general setting, we introduce a Hamiltonian. Let $H: \overline{\Omega}\times \mathbb{R}^n \to \mathbb{R}$ be a continuous function with the following properties: 
	\begin{itemize}
		\item \textbf{Growth condition:} $|H(x,p)| \leq C(1 + |p|^m)$ for some $m \in [0,3-\beta)$ and $(x,p) \in \overline{\Omega}\times\mathbb{R}^n$.
		\item \textbf{Continuous:} $|H(x,p) - H(y,p)| \leq \omega(|x-y|)(1 + |p|^m)$ where $\omega: [0,\infty) \to [0,\infty)$ is a continuous function with $\omega(0) = 0$.
	\end{itemize}
	In this article, we deal with viscosity solutions to equations of the form
	\begin{align}
		\label{gene_main}
		\Delta_\infty^\beta u - cH(u,\nabla u) - \lambda f(|x|,u) = 0 \mbox{ in } \Omega, \mbox{ and } u = g \mbox{ on }\partial \Omega.
	\end{align}
	Here $g$ is assumed to be continuous and $f$ is one of the three types of Hardy-Hénon nonlinearities defined in \eqref{Hf}. Additionally, we use the notation $\mathbb S^{n\times n}$ to denote the set of $n\times n$ real symmetric matrices. Moreover, for a symmetric matrix $A$, we define the maximal and minimal eigenvalues of $A$, respectively, by
	\begin{align}
		\label{matrix_A}
		M(A) = \max_{|x| = 1} \langle x,Ax \rangle, \quad m(A) = \min_{|x| = 1}\langle x,Ax \rangle.
	\end{align}
	\begin{definition}[\bf Viscosity solution]
		\label{Vis_sol}
		An upper-semicontinuous (or lower-semicontinuous) function $u$ on $\overline{\Omega}$ is said to be a viscosity subsolution (or supersolution) of \eqref{gene_main} if the following statements are satisfied:
		\begin{itemize}
			\item[(i)] $u \leq g$ on $\partial \Omega$ ($u \geq g$ on $\partial \Omega$);
			\item[(ii)] If $u \prec_{x_0}\varphi$ ($u\succ_{x_0} \varphi$) for some point $x_0 \in \Omega$ and a $C^2$ test function $\varphi$, then
			\begin{align*}
				&\Delta_\infty^\beta \varphi(x_0) - cH(u(x_0),\nabla \varphi(x_0)) - \lambda f(|x_0|,u(x_0)) \geq 0,\\
				&\left(\Delta_\infty^\beta \varphi(x_0) -cH(u(x_0),\nabla \varphi(x_0)) -\lambda f(|x_0|,u(x_0)) \leq 0\right);
			\end{align*} 
			\item[(iii)] For $\beta = 2$, if $u \prec_{x_0}\varphi$ ($u\succ_{x_0} \varphi$) and $\nabla \varphi(x_0) = 0 $ then
			\begin{align*}
				&M(D^2\varphi(x_0)) - cH(u(x_0),\nabla \varphi(x_0)) -\lambda f(|x_0|,u(x_0)) \geq 0,\\
				&\left(m(D^2\varphi(x_0)) - cH(u(x_0),\nabla \varphi(x_0)) -\lambda f(|x_0|,u(x_0)) \leq 0\right).
			\end{align*}
		\end{itemize}
		We call $u$ a viscosity solution if it is both a subsolution and a supersolution to \eqref{gene_main}.
	\end{definition}
	As is known, one can replace the requirement of strict maximum (or minimum) above by non-strict maximum (or minimum). We also recall the notion of superjet and subjet from \cite{CIL1992}. A second order superjet (subjet) of $u$ at $x_0 \in \Omega$ is defined as
	\begin{align*}
		&J^{2,+}_\Omega u(x_0) = \left\{\left(\nabla \varphi(x_0),D^2\varphi(x_0)\right): \varphi \mbox{ is }C^2 \mbox{ and }u-\varphi \mbox{ has a maximum at }x_0\right\},\\
		&\left(J^{2,-}_\Omega u(x_0) = \left\{\left(\nabla \varphi(x_0),D^2\varphi(x_0)\right): \varphi \mbox{ is }C^2 \mbox{ and }u-\varphi \mbox{ has a minimum at }x_0\right\}\right).
	\end{align*}
	Moreover, another definition of subjet at the point $x_0 \in \Omega$ is the set of all $\left(\nabla \varphi(x_0),D^2\varphi(x_0)\right)$ satisfying the following inequality
	\begin{align}
		\label{subjet}
		u(y) \geq u(x_0) + \langle \nabla \varphi(x_0),y-x_0 \rangle + \frac{1}{2} \langle D^2\varphi( x_0)(y-x_0),y -x_0 \rangle + o(|x_0-y|^2) \mbox{ as }y \to x_0.
	\end{align}
	To define the superjet, we reverse the inequality in \eqref{subjet}. The closure of a superjet is given by
	\begin{align*}
		\overline{J}^{2,+}_\Omega u(x_0) = \left\{(p,X) \in \mathbb{R}^n \times \mathbb{S}^{n\times n}: \exists(p_n,X_n) \in J^{2,+}_{\Omega}u(x_n) \mbox{ such that } \left(x_n,u(x_n),p_n,X_n\right) \to \left(x_0,u(x_0),p,X\right)\right\}.
	\end{align*}
	Similarly, we can also define the closure of a subjet, denoted by $\overline{J}^{2,-}_\Omega u$. See for instance, \cite{CIL1992} for more details.\\ 
		At the end of this section, we will state a lemma concerning the $\mathcal{O}(n)$ invariance of $\Delta_\infty^\beta$ to facilitate the radial symmetry argument in Section \ref{sec: 4}. Here, $\mathcal{O}(n)$ stands for the set of $n\times n$ real orthogonal matrices.
	\begin{lemma}[\bf $\mathcal{O}(n)$ invariance of $\Delta_\infty^\beta$]
		\label{lem:rotation}
		Let $O\in\mathcal{O}(n)$ be an orthogonal matrix, and  set $v(x):=u\left(O(x)\right)$. Then the following identity holds
		\begin{align*}
			\Delta_\infty^\beta v(x) = \Delta_\infty^\beta u\left(O(x)\right),\text{ for all }x\in\mathbb R^n.
		\end{align*}
	\end{lemma}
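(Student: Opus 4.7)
The plan is to carry out a direct chain-rule computation on both the numerator and the denominator appearing in the definition of $\Delta_\infty^\beta$, and then to exploit the orthogonality relation $O^T O = O O^T = I$ to match the two sides. I will first handle the case where $u$ is $C^2$, since then all derivatives are classical; the extension to the viscosity setting (relevant for applications in Section 4) follows by transporting test functions along $O$, a point I will record at the end.

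For the classical computation, I would write $O=(O_{ij})$ and differentiate $v(x)=u(Ox)$ once to get $\partial_i v(x)=\sum_k O_{ki}\,\partial_k u(Ox)$, equivalently $\nabla v(x)=O^T\nabla u(Ox)$, and then differentiate again to obtain $D^2 v(x)=O^T D^2 u(Ox)\,O$. The denominator is immediate: since $O$ is orthogonal,
\begin{align*}
|\nabla v(x)|^2=\langle O^T\nabla u(Ox),O^T\nabla u(Ox)\rangle=\langle \nabla u(Ox),OO^T\nabla u(Ox)\rangle=|\nabla u(Ox)|^2,
\end{align*}
so $|\nabla v(x)|^\beta=|\nabla u(Ox)|^\beta$. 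For the numerator, the key step is simply to chain the two factors of $O$ produced by $D^2 v$ with the one factor of $O^T$ from $\nabla v$, which yields
\begin{align*}
\langle D^2 v(x)\nabla v(x),\nabla v(x)\rangle
&=\langle O^T D^2u(Ox)\,O\,O^T\nabla u(Ox),\,O^T\nabla u(Ox)\rangle\\
&=\langle O^T D^2u(Ox)\nabla u(Ox),\,O^T\nabla u(Ox)\rangle\\
&=\langle D^2u(Ox)\nabla u(Ox),\,OO^T\nabla u(Ox)\rangle\\
&=\langle D^2u(Ox)\nabla u(Ox),\,\nabla u(Ox)\rangle,
\end{align*}
where I used $OO^T=I$ twice. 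Dividing by the common denominator gives the claimed identity pointwise.

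To promote this to the viscosity framework, I would observe that $\varphi$ is a $C^2$ test function touching $v$ from above (resp.\ below) at $x_0$ if and only if $\psi:=\varphi\circ O^T$ is a $C^2$ test function touching $u$ from above (resp.\ below) at $Ox_0$; applying the identity just proved to $\psi$ at $Ox_0$ transfers the pointwise equality to the jet level, and also handles the degenerate case $\nabla\varphi(x_0)=0$ when $\beta=2$ via the invariance of the maximal and minimal eigenvalues $M(A), m(A)$ under the congruence $A\mapsto O^TAO$, which is immediate from \eqref{matrix_A}. No genuine obstacle arises; the only care needed is bookkeeping of the $O$ and $O^T$ factors so that the two applications of $OO^T=I$ are deployed on the correct sides.
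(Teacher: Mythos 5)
Your proof is correct and takes essentially the same route as the paper: differentiate $v=u\circ O$ via the chain rule to obtain $\nabla v(x)=O^T\nabla u(Ox)$ and $D^2v(x)=O^TD^2u(Ox)O$, then invoke $OO^T=I$ to match both the gradient norm and the second-order quotient. Your closing remark on transporting test functions along $O$ (and the congruence-invariance of $M$ and $m$ for the $\beta=2$ case) is a welcome addition that the paper leaves implicit, but the core computation is the same.
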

	\begin{proof}
			We can view $O$ as a linear function, thus its Jacobian is constant and  $O'= O$. Applying the chain rule in vector form $v = u \circ O$ gives us
			\begin{align*}
				\nabla v(x) = O^T \nabla u\left(O(x)\right),\quad x\in\mathbb R^n.
			\end{align*}
		Combining the above identity with the fact that $O$ is an orthogonal matrix, we deduce 
		\begin{align}\label{lemma2.1}
			|\nabla v(x)| = |\nabla u(O(x))|,\quad x\in\mathbb R^n.
		\end{align}
		Also, by the Hessian chain rule and $O'' = 0$, we obtain
		\begin{align*}
			D^2v(x) = O^T\left[D^2u\left( O(x)\right)\right]O,\quad x\in\mathbb R^n.
		\end{align*}
		This yields that for every $x\in\mathbb R^n$, we arrive at
		\begin{align*}
			\langle D^2 v(x)\nabla v(x),\nabla v(x)\rangle = (\nabla v(x))^T (D^2v(x)) \nabla v(x)=\left[O^T\nabla u\left(O(x)\right)\right]^T \left[O^TD^2 u\left(O(x)\right)O\right]\left[O^T \nabla u\left(O(x)\right)\right].
		\end{align*}
		We know that $(AB)^T = B^TA^T$ and $OO^T = I$, then
		\begin{align}\label{lemma2.2}
			\langle D^2 v(x)\nabla v(x),\nabla v(x)\rangle = \langle D^2u\left(O(x)\right)\nabla u\left(O(x)\right),\nabla u\left(O(x)\right)\rangle.
		\end{align}
		Combining \eqref{lemma2.1} with \eqref{lemma2.2}, we easily confirm
		\begin{align*}
			 \Delta_{\infty}^{\beta}v(x)=\Delta_{\infty}^{\beta}u\left(O(x)\right) \text{ in } \mathbb R^n, 
			 \end{align*}
			which completes the proof.
	\end{proof}

	\section{\bf Locally Lipschitz estimates and Comparison principle}
	\label{sec: 3}
	In this section, we state and prove two key results for this paper: improved regularity estimates and a comparison principle. Now, we introduce a lemma about the regularity of viscosity solutions to problem \eqref{main}, which helps us to establish a comparison principle for solutions of this equation. The techniques used to prove the following results are an adaptation of methods in \cite{AH2020} for our fractional infinity Laplacian equations. 
	\begin{lemma}
		\label{lipschitz1}
		Let $f:\Omega\times\mathbb R\to\mathbb R$ satisfy the condition
		\begin{align*}
			\sup\limits_{\Omega\times I}f(|x|,t)<\infty,\text{ for every compact set }I\subset \mathbb R.
		\end{align*}
			Assume that $v$ is a bounded viscosity solution of $\Delta_{\infty}^{\beta}v\geq -c_1|\nabla v|^{m}-c_2 f(|x|,v)$ in $\Omega$ with $c_i\geq 0$ for $i\in\{1,2\}$ and $m \in (0,3-\beta]$. Then $v$ is locally Lipschitz in $\Omega$ with Lipschitz constant depending on $c_i,\|v\|_{L^{\infty}(\Omega)}$, and $\|f_0\|_{L^{\infty}(\Omega)}=\sup\limits_{\Omega}f(|x|,v)$. 	 
	\end{lemma}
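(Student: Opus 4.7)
The plan is to adapt the doubling-of-variables / Ishii--Lions technique used for Lipschitz regularity of $\Delta_\infty^\beta$-type equations in \cite[Lemma~2.2]{AH2020}. Fix a ball $B_{2r}(x_0)\Subset\Omega$ and set $M := \|v\|_{L^\infty(B_{2r}(x_0))}$ and $K := \sup_{x\in B_{2r}(x_0)}|f(|x|,v(x))|$, both finite by hypothesis on $v$ and $f$. It suffices to exhibit a constant $L>0$, depending only on $n,\beta,m,r,c_1,c_2,M,K$, such that $v(x)-v(y)\le L|x-y|$ for all $x,y\in B_r(x_0)$; the two-sided estimate then follows by symmetry in the roles of $x$ and $y$.

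\textbf{Doubling and contradiction argument.} Choose a strictly concave modulus $\phi\in C^2((0,s_0])$, for instance $\phi(s)=s-\kappa s^{3/2}$, with $\kappa,s_0>0$ small enough that $1/2\le\phi'(s)\le 1$ on $[0,s_0]$. For $0<\tau\ll 1\ll L$ consider
\[
\Phi(x,y) = v(x)-v(y)-L\phi(|x-y|)-\tau\bigl(|x-x_0|^2+|y-x_0|^2\bigr)
\]
on $\overline{B_{2r}(x_0)}\times\overline{B_{2r}(x_0)}$, and assume for contradiction that $\max\Phi>0$ for a sequence $L\to\infty$. The boundary penalization, together with $|v(x)-v(y)|\le 2M$, forces the maximizer $(\bar x,\bar y)$ to be interior, $\bar x\ne\bar y$, with $s:=|\bar x-\bar y|\to 0$ as $L\to\infty$. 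Then $\psi(x):=v(\bar y)+L\phi(|x-\bar y|)+\tau(|x-x_0|^2+|\bar y-x_0|^2)$ touches $v$ from above at $\bar x$, and with $e:=(\bar x-\bar y)/s$ direct differentiation yields
\[
\nabla\psi(\bar x) = L\phi'(s)\,e + 2\tau(\bar x-x_0), \qquad D^2\psi(\bar x) = L\phi''(s)\,e\otimes e + \frac{L\phi'(s)}{s}(I-e\otimes e) + 2\tau I.
\]
For $\tau\ll L$ one has $|\nabla\psi(\bar x)|\sim L\phi'(s)\sim L$, and only the $e\otimes e$-component of $D^2\psi(\bar x)$ contributes at leading order, so
\[
\Delta_\infty^\beta\psi(\bar x) = L^{3-\beta}\phi''(s)\bigl(\phi'(s)\bigr)^{2-\beta} + O(\tau L^{2-\beta}).
\]
Plugging this into the viscosity subsolution inequality for $v$ at $\bar x$ gives
\[
L^{3-\beta}|\phi''(s)|\bigl(\phi'(s)\bigr)^{2-\beta} \le c_1\bigl(L\phi'(s)+2\tau r\bigr)^m + c_2 K + O(\tau L^{2-\beta}).
\]
Since $|\phi''(s)|=(3\kappa/4)s^{-1/2}\to\infty$ while $\phi'(s)\to 1$ as $s\to 0$, and since $m\le 3-\beta$, sending first $\tau\to 0$ and then $L\to\infty$ (so that $s\to 0$) produces a contradiction. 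The Lipschitz bound follows with constant proportional to the threshold value of $L$.

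\textbf{Main obstacle.} The delicate step is the calibration of the concavity constant $\kappa$ so that the negative second-order term $L^{3-\beta}|\phi''(s)|(\phi'(s))^{2-\beta}$ dominates the gradient absorption $c_1(L\phi'(s))^m$ uniformly as $s\to 0$; this is most subtle in the borderline regime $m=3-\beta$, where matching powers of $L$ forces $\kappa$ to depend on $c_1$, but the blow-up factor $s^{-1/2}$ always closes the gap for $s$ small. A secondary subtlety is the degeneracy of the prefactor $|\nabla\psi|^{-\beta}$ when $\beta\in(0,2]$: this is neutralized by the lower bound $|\nabla\psi|\gtrsim L$ on $[0,s_0]$ valid for $\tau$ small, so the exceptional case $\nabla\psi=0$ in Definition~\ref{Vis_sol} never arises.
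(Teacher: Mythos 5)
Your proposal is correct in substance but follows a genuinely different route from the paper. The paper's proof is indirect: it first reduces to $m=3-\beta$ by Young's inequality, passes from $\Delta_\infty^\beta v$ to $\Delta_\infty v$, and then performs the Cole--Hopf type substitution $h = v + \tfrac{\Lambda}{2}v^2$ with $\Lambda\|v\|_\infty<1$. The identity $\Delta_\infty h = (1+\Lambda v)^3\Delta_\infty v + \Lambda(1+\Lambda v)^2|\nabla v|^4$ produces a quartic term $|\nabla v|^4$ that, after one more application of Young's inequality, swallows the cubic gradient term and leaves $\Delta_\infty h \geq -\Gamma$ for an explicit constant $\Gamma$; the Lipschitz bound for $h$ (hence $v$) then comes essentially for free from \cite[Theorem~2.4]{TA2012}. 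By contrast, you run the doubling-of-variables argument with a strictly concave modulus directly on the original operator $\Delta_\infty^\beta$ and its gradient term, which is in effect the machinery that underlies the cited result \cite{TA2012}; your argument is therefore more self-contained, at the price of carrying the gradient nonlinearity and the $|\nabla\psi|^{-\beta}$ prefactor all the way to the contact point. Both methods handle the borderline exponent $m=3-\beta$: the paper by absorbing $|\nabla v|^3$ into the $|\nabla v|^4$ term created by the transformation, you by the blow-up of $|\phi''(s)|\sim s^{-1/2}$ as $s\to 0$.

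One step in your write-up is stated too strongly and would need a patch. The quadratic penalization $\tau(|x-x_0|^2+|y-x_0|^2)$ with $\tau$ small does not by itself force the maximizer $(\bar x,\bar y)$ into the interior of $B_{2r}(x_0)\times B_{2r}(x_0)$: from $\Phi(\bar x,\bar y)>0$ and $v(\bar x)-v(\bar y)\le 2M$ one only obtains $\tau|\bar x - x_0|^2<2M$, which is vacuous once $\tau<M/(2r^2)$, so a maximizer on $\partial B_{2r}(x_0)$ is not excluded. This is a standard technicality with standard fixes, e.g., replacing the quadratic term by a localization that blows up at $\partial B_{2r}(x_0)$, or restricting the first variable to $\overline{B_r(x_0)}$ and using $s\to 0$ to push the second variable strictly inside $B_{2r}(x_0)$; either way the rest of the argument carries through. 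Also, since the lemma only assumes $f$ bounded above, $K$ should be $\sup f$ rather than $\sup|f|$, although this does not affect the conclusion since it is precisely an upper bound on $f$ that enters the subsolution inequality.
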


	\begin{proof}
		By using Young's inequality, we only need to consider $m=3-\beta$. In this case, $v$ is a solution to
		\begin{align*}
			\Delta_{\infty}^{\beta}v\geq -c_1|\nabla v|^{3-\beta}-c_2f(|x|,v)\text{ in }\Omega.
		\end{align*}		 
		Then we obtain in the viscosity sense 
		\begin{align}
			\label{proof_lips_3}
			\Delta_{\infty}v\geq -\tilde{c}_1|\nabla v|^{3} - \tilde{c}_2,
		\end{align}
		for some nonnegative constants $\tilde{c}_1,\tilde{c_2}$ depending on $c_i$ and $\|f_0\|_{L^{\infty}(\Omega)}$. 
		
		There is no loss of generality in assuming $v\geq 0$. Now, we choose $\Lambda>0$ sufficiently small such that $\Lambda\|v\|_{L^\infty(\Omega)}<1$. We consider the function $h(x)=v(x)+\dfrac{\Lambda}{2}v^2(x)$. 
	A straightforward calculation reveals that
		\begin{align*}
			\Delta_{\infty}h&=(1+\Lambda v)^3\Delta_{\infty}v+\Lambda(1+\Lambda v)^2|\nabla v|^4 \nonumber.
		\end{align*}
	Applying the estimate \eqref{proof_lips_3}, we derive that
		\begin{align}
			\label{proof_lips_4}
			\Delta_{\infty}h\geq (1+\Lambda v)^3\left[-\tilde{c}_1|\nabla v|^{3}-\tilde{c}_2+\dfrac{\Lambda}{1+\Lambda v}|\nabla v|^4\right].
		\end{align}
		Applying Young's inequality again, we deduce
		\begin{align}
			\label{proof_lips_5}
			\tilde{c}_1|\nabla v|^3\le \dfrac{1}{4}(\tilde{c}_1)^4\left(\dfrac{1+\Lambda v}{\Lambda}\right)^3+\dfrac{3}{4}\dfrac{\Lambda}{1+\Lambda v}|\nabla v|^4.
		\end{align}
		Substituting \eqref{proof_lips_5} into \eqref{proof_lips_4}, noting that $1\le 1+\Lambda v<2$, we obtain
		\begin{align*}
			\Delta_{\infty}h&\geq (1+\Lambda v)^3\left[-\dfrac{1}{4}(\tilde{c}_1)^4\left(\dfrac{1+\Lambda v}{\Lambda}\right)^3+\dfrac{1}{4}\dfrac{\Lambda}{1+\Lambda v}|\nabla v|^4-\tilde{c}_2\right]\\
			&\geq -\dfrac{1}{4}(\tilde{c}_1)^4\dfrac{(1+\Lambda v)^6}{\Lambda^3}-\tilde{c}_2.
		\end{align*}
		Since $1\le 1+\Lambda v<2$, it follows
		\begin{align*}
			\Delta_{\infty}h \geq -\left[\dfrac{16}{\Lambda^3}(\tilde{c}_1)^4+8\tilde{c}_2\right].
		\end{align*}
		Therefore, we conclude that $\Delta_{\infty}h\geq -\Gamma$ in $\Omega$, where $\Gamma=\dfrac{16}{\Lambda^3}(\tilde{c}_1)^4+8\tilde{c}_2\geq 0$, from which we conclude that $h$ is locally Lipschitz in $\Omega$ thanks to \cite[Theorem 2.4]{TA2012}. Observe that $v=\dfrac{1}{\Lambda}(\sqrt{1+2\Lambda h}-1)$, and this allows us to complete the proof.
	\end{proof}
	With the local Lipschitz estimate in hand, we next establish a comparison principle
	\begin{theorem}[\bf Comparison principle]
		\label{Comp.Prin}
		Let $\Omega\subset\mathbb{R}^n$ be a bounded domain, $\lambda > 0$. Let $f: \Omega \times \mathbb{R} \to \mathbb{R}$ be a continuous and non-decreasing function in the second variable, in particular, it covers the forms (I) and (II). Suppose $u\in\mathrm{USC}(\overline{\Omega})$ is a viscosity subsolution of
		\begin{align*}
			\Delta_\infty^\beta u - cH(u,\nabla u) - \lambda f(|x|,u) = h_1(x)\text{ in }\Omega,
		\end{align*}
		and $v\in\mathrm{LSC}(\overline{\Omega})$ is a viscosity supersolution of the same equation with $h_1$ replaced by $h_2$. Assume $v\geq u$ on $\partial\Omega$ and $h_1 > h_2$ in $\overline{\Omega}$. Then $v\geq u$ in $\Omega$.
	\end{theorem}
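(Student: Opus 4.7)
The plan is to argue by contradiction: suppose there exists $\bar x \in \Omega$ with $u(\bar x) > v(\bar x)$, so that $M := \sup_{\overline{\Omega}}(u-v) > 0$. Since $v \geq u$ on $\partial\Omega$ and both $u \in \mathrm{USC}(\overline{\Omega})$ and $v \in \mathrm{LSC}(\overline{\Omega})$ attain, respectively, their supremum and infimum on the compact set $\overline{\Omega}$, this positive supremum is realized at an interior point of $\Omega$. As a preliminary step, I would invoke Lemma~\ref{lipschitz1}: the subsolution inequality $\Delta_\infty^\beta u \geq cH(u,\nabla u) + \lambda f(|x|,u) + h_1(x)$ can be rewritten, using the growth condition $|H(u,p)| \leq C(1+|p|^m)$ and local boundedness of $u,h_1,f(|x|,u)$, in the form $\Delta_\infty^\beta u \geq -c_1|\nabla u|^m - c_2$, so $u$ is locally Lipschitz. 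A symmetric argument applied to $w = -v$ yields the same regularity for $v$. This uniform Lipschitz bound is the key ingredient that will control the penalization gradient.

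Next, I would perform a classical doubling of variables. On a subdomain $\Omega' \Subset \Omega$ containing a maximizer of $u-v$, consider
\[
\Phi_j(x,y) \;=\; u(x) - v(y) - \frac{j}{4}|x-y|^{4}, \qquad j \in \mathbb{N},
\]
and let $(x_j,y_j)$ maximize $\Phi_j$ over $\overline{\Omega'}\times\overline{\Omega'}$. Standard arguments (see \cite{CIL1992}) give $j|x_j-y_j|^4 \to 0$ and $x_j, y_j \to \bar x \in \Omega'$, and one checks $(x_j,y_j)$ is interior for $j$ large. The Lipschitz control on $u,v$ then forces the uniform bound $|p_j| = j|x_j-y_j|^{3} \leq L$, where $p_j := j|x_j-y_j|^{2}(x_j-y_j)$. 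By the Crandall--Ishii lemma, there exist symmetric matrices $X_j, Y_j$ with $X_j \leq Y_j$ and $(p_j,X_j)\in \overline{J}^{2,+}_\Omega u(x_j)$, $(p_j,Y_j)\in \overline{J}^{2,-}_\Omega v(y_j)$.

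Inserting the jets into the sub/supersolution inequalities and subtracting yields, for indices $j$ with $p_j \neq 0$,
\[
\frac{\langle (X_j - Y_j)p_j,\, p_j\rangle}{|p_j|^{\beta}} \;\geq\; c\bigl[H(u(x_j),p_j) - H(v(y_j),p_j)\bigr] + \lambda\bigl[f(|x_j|,u(x_j)) - f(|y_j|,v(y_j))\bigr] + \bigl[h_1(x_j) - h_2(y_j)\bigr].
\]
The left-hand side is $\leq 0$ by $X_j \leq Y_j$. On the right-hand side, as $j \to \infty$ the monotonicity of $f$ in the second argument together with $u(\bar x) > v(\bar x)$ renders the $f$-bracket asymptotically nonnegative; the sign convention $cH \geq 0$ (which makes $u \mapsto cH(u,p)$ monotone non-decreasing) handles the $H$-bracket analogously, where the Lipschitz bound on $|p_j|$ together with the continuity of $H$ on compact sets allows one to pass to the limit in the $H$-difference. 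Finally, the strict hypothesis $h_1 > h_2$ on $\overline{\Omega}$ gives $\liminf_j [h_1(x_j) - h_2(y_j)] \geq \inf_{\overline{\Omega}}(h_1-h_2) > 0$, contradicting the nonpositivity of the left-hand side.

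The main obstacle is the degeneracy of $\Delta_\infty^\beta$ at $p_j = 0$: for $\beta=0$ the operator is well defined everywhere and the argument is routine, but for $\beta \in (0,2]$ the normalization $|p_j|^{-\beta}$ blows up. For $\beta=2$ one must invoke clause (iii) of Definition~\ref{Vis_sol}, replacing the normalized quantity by $M(X_j)$ and $m(Y_j)$, but the subtraction no longer telescopes cleanly because $M(X_j) \leq M(Y_j)$ does not imply $M(X_j) \leq m(Y_j)$. For $\beta \in (0,2)$ the operator and Hamiltonian simultaneously degenerate. Following the coupling technique of \cite{AH2020}, I would overcome this either by perturbing the penalization with a small radial bump that forces $x_j \neq y_j$ along the maximizing sequence, or by passing to an exponential coupling whose gradient is strictly nondegenerate off the diagonal, thereby ensuring $p_j \neq 0$ and allowing the above telescoping to run uniformly in $j$.
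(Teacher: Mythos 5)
Your skeleton — contradiction, Lipschitz regularity from Lemma~\ref{lipschitz1}, doubling with the quartic penalization, Crandall--Ishii, telescoping — is essentially the paper's argument, and that part is fine. The genuine gap is in your treatment of the degenerate case $p_j = 0$. You declare it the main obstacle and propose perturbing the penalization or switching to an exponential coupling, but no workaround is needed here, because you have overlooked what $p_j = 0$ forces. With the penalization $\frac{j}{4}|x-y|^4$, the common gradient is $p_j = j|x_j-y_j|^2(x_j-y_j)$; thus $p_j = 0$ is equivalent to $x_j = y_j$, and then the right-hand side of the Crandall--Ishii matrix inequality, which is $D^2\Phi_j(x_j,y_j) + \varepsilon[D^2\Phi_j(x_j,y_j)]^2$ with $\Phi_j = \frac{j}{4}|x-y|^4$, vanishes identically. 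Hence $X_j \leq 0 \leq Y_j$, not merely $X_j \leq Y_j$. For $\beta \in [0,2)$ both normalized second-order terms vanish (since $\Delta_\infty^\beta\varphi = O(|\nabla\varphi|^{2-\beta})$), and for $\beta = 2$ clause (iii) of Definition~\ref{Vis_sol} applies with $M(X_j) \leq 0 \leq m(Y_j)$, so the subtraction you think ``no longer telescopes cleanly'' in fact telescopes with the correct sign. The inequality $M(X_j) \leq M(Y_j) \not\Rightarrow M(X_j) \leq m(Y_j)$ that you invoke is true in general but irrelevant at a diagonal point.

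A secondary issue is your handling of the Hamiltonian bracket. You claim that the sign convention $cH \geq 0$ ``makes $u \mapsto cH(u,p)$ monotone non-decreasing''; nonnegativity of a function does not imply its monotonicity, so as stated this is a non sequitur. For the three concrete Hamiltonians in \eqref{Hf} it happens that $cH(t,p) = c' t^q|p|^m$ or $c''|p|^m$ with $c',c'' \geq 0$, which is non-decreasing in $t$ for $t \geq 0$, but that requires an a priori sign on the solutions rather than following from $cH \geq 0$. The paper instead absorbs the $H$-difference using the structural modulus-of-continuity hypothesis on $H$ together with the vanishing of $|x_\varepsilon - y_\varepsilon|$; if you want to argue via monotonicity you must state and use the actual structural property of the specific $H$, not deduce it from the sign constraint on $cH$.
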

	\begin{proof}
		Let us suppose, for the sake of contradiction, that there exists $A > 0$ such that $A = \displaystyle\sup_{\overline{\Omega}}(u - v)$. Consider
		\begin{align*}
			\omega_\varepsilon(x,y) = u(x) - v(y) - \frac{1}{4\varepsilon}|x-y|^4 \mbox{ for } x,y \in \overline{\Omega}.
		\end{align*}
		For each $\varepsilon > 0$ small, we define the maximum of $\omega_\varepsilon$
		\begin{align*}
			A_\varepsilon = \sup_{\overline{\Omega} \times \overline{\Omega}}\left(u(x) - v(y) - \frac{1}{4\varepsilon}|x-y|^4\right) < \infty.
		\end{align*}
		Note that $A_\varepsilon \geq A$ for all $\varepsilon$. Let $(x_\varepsilon,y_\varepsilon) \in \overline{\Omega} \times \overline{\Omega}$ be a point for $\omega_\varepsilon$ where the maximum is attained. It is then standard to show that \cite[Lemma 3.1]{CIL1992}
		\begin{align*}
			\lim\limits_{\varepsilon \to 0}\frac{1}{4\varepsilon}|x_\varepsilon - y_\varepsilon|^4 = 0, \mbox{ and } \lim\limits_{\varepsilon \to 0}A_\varepsilon = A.
		\end{align*}
		In particular, there exists $z_0 \in \Omega$ such that
		\begin{align}
			\label{proof_2}
			\lim\limits_{\varepsilon \to 0}x_\varepsilon = \lim\limits_{\varepsilon \to 0}y_\varepsilon = z_0,\text{ and }u(z_0)-v(z_0)=A.
		\end{align}
		Moreover, one observes that
		\begin{align*}
			A > 0 \geq \sup_{\partial \Omega}(u - v),
		\end{align*}
		i.e, the maximizer cannot move towards the boundary. As a result, $x_\varepsilon, y_\varepsilon \in \Omega_1$ for some interior domain $\Omega_1 \Subset \Omega$ and $\varepsilon > 0$ sufficiently small. Since $u,v$ are Lipschitz continuous in $\Omega_1$~(Lemma \ref{lipschitz1}), there exists $L > 0$ such that
		\begin{align}\label{com4}
			|u(z_1) - u(z_2)| + |v(z_1) - v(z_2)| \leq L |z_1 - z_2|, \quad z_1, z_2 \in \Omega_1.
		\end{align}
		From $\omega_\varepsilon(x_\varepsilon, x_\varepsilon) \leq \omega_\varepsilon(x_\varepsilon, y_\varepsilon)$, one obtains that
		\begin{align}\label{com5}
		\frac{1}{4\varepsilon} |x_\varepsilon - y_\varepsilon|^4\leq v(x_\varepsilon) - v(y_\varepsilon).
		\end{align}
		Combining \eqref{com4} with \eqref{com5}, we deduce that
		\begin{align}
			\label{proof_3}
			|x_\varepsilon - y_\varepsilon|^3 \leq 4\varepsilon L.
		\end{align}
		Set $\eta_\varepsilon = \frac{1}{\varepsilon} |x_\varepsilon - y_\varepsilon|^2 (x_\varepsilon - y_\varepsilon) \mbox{ and } \Phi_\varepsilon(x,y) = \frac{1}{4\varepsilon}|x-y|^4$. Following \cite[Theorem 3.2]{ALT2016}, there exist symmetric matrices $X, Y \in \mathbb{S}^{n \times n}$ such that
		\begin{align*}
			(\eta_\varepsilon, X) \in \overline{J}^{2,+}_{\Omega} u(x_\varepsilon), \quad (\eta_\varepsilon, Y) \in \overline{J}^{2,-}_{\Omega} v(y_\varepsilon),
		\end{align*}
		and 
		\begin{align}
			\label{matrix}
			\left(\begin{array}{lc}
				X &0\\
				0 &-Y
			\end{array}\right) \leq D^2\Phi_\varepsilon(x_\varepsilon,y_\varepsilon) + \varepsilon \left[D^2\Phi_\varepsilon(x_\varepsilon,y_\varepsilon)\right]^2.
		\end{align}
		In particular, we get $X \leq Y$. If $\eta_{\varepsilon}\neq 0$, from the viscosity subsolution property of $u$, it gives us
		\begin{align}\label{com1}
			h_1(x_\varepsilon) &\leq |\eta_\varepsilon|^{-\beta} \langle X \eta_\varepsilon, \eta_\varepsilon \rangle - cH(u(x_{\varepsilon}),\eta_\varepsilon) - \lambda f(|x_\varepsilon|,u(x_\varepsilon))\notag\\
			&\leq |\eta_\varepsilon|^{-\beta} \langle Y \eta_\varepsilon, \eta_\varepsilon \rangle - cH(u(x_{\varepsilon}),\eta_\varepsilon) - \lambda f(|x_\varepsilon|,u(x_\varepsilon)).
		\end{align}
		Similarly, applying the definition of supersolution, we obtain
		\begin{align}\label{com2}
			h_2(y_\varepsilon) &\geq |\eta_\varepsilon|^{-\beta} \langle Y \eta_\varepsilon, \eta_\varepsilon \rangle- cH(v(y_{\varepsilon}),\eta_\varepsilon) - \lambda f(|y_\varepsilon|,v(y_\varepsilon)).
		\end{align}
		From \eqref{com1} and \eqref{com2}, we have the following estimate
		\begin{align}\label{com3}
			h_1(x_\varepsilon) &\leq  h_2(y_\varepsilon) +cH(v(y_{\varepsilon}),\eta_\varepsilon)- cH(u(x_{\varepsilon}),\eta_\varepsilon) + \lambda f(|y_\varepsilon|,v(y_\varepsilon)) - \lambda f(|x_\varepsilon|,u(x_\varepsilon)).
		\end{align}
		From the condition on $H$, \eqref{com3} implies that 
		\begin{align*}
			h_2(y_\varepsilon) - h_1(x_\varepsilon)+ \lambda\left[f(|y_\varepsilon|,v(y_\varepsilon)) - f(|x_\varepsilon|,u(x_\varepsilon))\right] + c\omega(|x_\varepsilon-y_\varepsilon|)(1+ |\eta_\varepsilon|^m)\geq 0.
		\end{align*}
		Letting $\varepsilon \to 0$, using \eqref{proof_2} and \eqref{proof_3}, we conclude that
			\begin{align}
			\label{result_1}
			\sup_{\overline{\Omega}}\left(h_2 - h_1\right) \geq \lambda\left[f(|z_0|,u(z_0)) - f(|z_0|,v(z_0))\right].
		\end{align}
		Since $u(z_0)>v(z_0)$ and $f$ is non-decreasing in the second variable, then
		\begin{align*}
			\sup_{\overline{\Omega}}\left(h_2 - h_1\right)\geq 0,
		\end{align*}
		which contradicts the assumption $h_1>h_2$ on $\overline{\Omega}$.
		
		Otherwise, if $\eta_\varepsilon = 0$ then $x_\varepsilon = y_\varepsilon$. Therefore, from \eqref{matrix}, it yields 
		\begin{align*}
			\left(\begin{array}{lc}
				X &0\\
				0 &-Y
			\end{array}\right) \leq \left(\begin{array}{lc}
				0 &0\\
				0 &0
			\end{array}\right),
		\end{align*}
		which leads to $X \leq 0 \leq Y$. Combining this with \eqref{matrix_A}, we infer that
		\begin{align}
			\label{Cond_M}
			M(X) \leq 0 \leq m(Y).
		\end{align}
		For $\beta\in [0,2)$, we can apply similar techniques as the above case to get a similar contradiction. Finally, we address the case $\beta=2$ and $\eta_\varepsilon = 0$. By Definition \ref{Vis_sol} for this case, we have
		\begin{align*}
			&\mbox{Subsolution: } M(X) - cH(u(x_\varepsilon),0) - \lambda f(|x_\varepsilon|,u(x_\varepsilon)) \geq h_1(x_\varepsilon);\\
			&\mbox{Supersolution: }m(Y) - cH(v(y_\varepsilon),0) - \lambda f(|y_\varepsilon|,v(y_\varepsilon)) \leq h_2(y_\varepsilon).
		\end{align*}
		Subtracting the subsolution inequality from the supersolution inequality, we obtain
		\begin{align*}
			h_2(y_\varepsilon) - h_1(x_\varepsilon) \geq m(Y) - M(X) +c\left[H(u(x_\varepsilon),0)-H(v(y_\varepsilon),0)\right] - \lambda f(|y_\varepsilon|,v(y_\varepsilon)) + \lambda f(|x_\varepsilon|,u(x_\varepsilon)).
		\end{align*}
		From \eqref{Cond_M}, we readily obtain
		\begin{align}
			\label{case:eta = 0-1}
			h_2(y_\varepsilon) - h_1(x_\varepsilon) \geq  \lambda f(|x_\varepsilon|,u(x_\varepsilon))- \lambda f(|y_\varepsilon|,v(y_\varepsilon)). 
		\end{align}
		Letting $\varepsilon \to 0$ in \eqref{case:eta = 0-1}, we get the estimate \eqref{result_1} again, which is a contradiction. The proof is completed.
	\end{proof}
		As a consequence, we obtain existence and uniqueness of solutions under suitable conditions. This result hinges on the possibility of sandwiching the solution between two special functions, known as a supersolution and a subsolution. Specifically, we demonstrate that if a pair of ordered supersolution and subsolution can be constructed that satisfy appropriate regularity and boundary conditions, then the equation admits a unique solution within the region delimited by this pair.
	\begin{theorem}[\bf Existence and Uniqueness]
		\label{theo:sandwich}
		Let $\Omega \subset \mathbb{R}^n$ be a bounded $C^1$ domain. Suppose that $h$ and $g$ are continuous functions in $\overline{\Omega}$. Let  $f:\Omega\times\mathbb R\to\mathbb R$ be a continuous function. We consider the boundary value problem
		\begin{align}
			\label{barrel-method}
			\left\{
			\begin{array}{clcl}
				\Delta_{\infty}^{\beta} u - cH(u,\nabla u) -  \lambda f(|x|, u) &= &h(x) &\text{in } \Omega, \\
				u &= &g &\text{on } \partial\Omega.
			\end{array}\right.
		\end{align}
		Assume that $\overline{u} \in C(\overline{\Omega})$ is a supersolution and $\underline{u} \in C(\overline{\Omega})$ is a subsolution to the problem \eqref{barrel-method} such that $\overline{u} \geq \underline{u} \geq 0$. We also suppose that $\underline{u} = g$ on $\partial\Omega$. Then, the equation \eqref{barrel-method} admits a unique solution $u$ satisfying the enclosure
		\begin{align*}
			0\leq\underline{u} \leq u \leq \overline{u} \text{ in } \overline{\Omega}.
		\end{align*}
	\end{theorem}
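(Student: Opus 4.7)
The plan is to apply Perron's method, using the comparison principle (Theorem~\ref{Comp.Prin}) as the central ingredient. I would introduce the Perron class
\[
\mathcal{P} := \bigl\{ w \in \mathrm{USC}(\overline{\Omega}) : w \text{ is a viscosity subsolution of } \eqref{barrel-method} \text{ and } \underline{u} \leq w \leq \overline{u} \text{ in } \overline{\Omega} \bigr\},
\]
which is nonempty since $\underline{u}\in\mathcal{P}$, and set
\[
u(x) := \sup\{w(x) : w \in \mathcal{P}\}, \qquad x \in \overline{\Omega}.
\]
Each $w \in \mathcal{P}$ satisfies $w \leq g$ on $\partial\Omega$ as a subsolution, whence $u \leq g$ on $\partial\Omega$; combined with $u \geq \underline{u} = g$ this forces $u = g$ on $\partial\Omega$, and by construction $0 \leq \underline{u}\le u\le\overline{u}$ in $\overline{\Omega}$.

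The core of the argument is twofold. First, the upper semicontinuous envelope $u^*$ is a viscosity subsolution of \eqref{barrel-method}, which is a general stability property of suprema of viscosity subsolutions and transfers to our setting by the continuity of $H$ and $f$. Second, the lower semicontinuous envelope $u_*$ is a viscosity supersolution: if $u_*$ failed the supersolution test at some interior point $x_0$ via a $C^2$ test function $\varphi$, then necessarily $u_*(x_0) < \overline{u}(x_0)$, since otherwise $\varphi$ would also touch $\overline{u}$ from below at $x_0$ and contradict the supersolution property of $\overline{u}$. One can then build a strictly larger competitor $\widetilde{u} := \max\{u,\varphi+\delta\}$ inside a small ball around $x_0$, extended by $u$ outside; for $\delta>0$ sufficiently small $\widetilde{u}$ still belongs to $\mathcal{P}$ but strictly exceeds $u$ at some nearby point, contradicting the maximality of $u$. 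The main technical obstacle lies in this bump step in the degenerate setting: when $\beta=2$ and $\nabla\varphi(x_0)=0$ one must invoke the auxiliary condition in Definition~\ref{Vis_sol}(iii), and one must track the continuous dependence of $\Delta_\infty^\beta$ and $H$ on the gradient carefully to ensure that the strict inequality defining the bump persists in an entire neighborhood.

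Combining the two envelopes, $u_*\leq u^*$ holds trivially and both equal $g$ on $\partial\Omega$, so Theorem~\ref{Comp.Prin} applied to $u^*$ and $u_*$ yields $u^*\le u_*$ in $\Omega$; hence $u=u^*=u_*$ is continuous on $\overline{\Omega}$ and a viscosity solution of \eqref{barrel-method} enclosed between $\underline{u}$ and $\overline{u}$. For uniqueness, given two such solutions $u_1$ and $u_2$, I would apply Theorem~\ref{Comp.Prin} to the perturbed pair $u_1-\varepsilon$ and $u_2$: exploiting the monotonicity of $f$ in its second argument (together with the structural form of $H$ in the admissible models), $u_1-\varepsilon$ becomes a subsolution of the equation with right-hand side $h-\delta_\varepsilon$ for some $\delta_\varepsilon>0$, furnishing the strict inequality $h_1>h_2$ required by the comparison principle. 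This yields $u_1-\varepsilon \le u_2$ in $\overline{\Omega}$, and letting $\varepsilon\to 0^+$ followed by a symmetric argument gives $u_1=u_2$; the need for this $\varepsilon$-perturbation is the second delicate point, forced by the strict inequality demand in Theorem~\ref{Comp.Prin}.
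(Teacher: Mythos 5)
Your Perron set-up (mirror-image, with subsolutions rather than the paper's collection of supersolutions) is fine in spirit, and the bump argument for $u_*$ being a supersolution is the standard one. However, there is a genuine gap at the step where you identify the two envelopes: you invoke Theorem~\ref{Comp.Prin} to conclude $u^*\le u_*$, but that theorem requires the \emph{strict} ordering $h_1>h_2$ of the inhomogeneities, whereas $u^*$ and $u_*$ are by construction a subsolution and a supersolution of the \emph{same} equation with the \emph{same} right-hand side $h$. The $\varepsilon$-perturbation device you describe for uniqueness does not repair this for the Hardy--H\'enon models in \eqref{Hf}: subtracting $\varepsilon$ from a solution does not lower $f$ strictly and uniformly, since $f(|x|,t)=|x|^\alpha(t^+)^\gamma$ is constant in $t$ near $t\le 0$ and vanishes on $\{x=0\}$, and the gradient terms $H$ are unchanged by the shift when $H=|\nabla u|^m$. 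So the promised $\delta_\varepsilon>0$ is not available. The paper deliberately avoids this trap in the existence step: once $v^*\in\mathcal T$, it follows that $v=v^*$ by definition of the infimum, and \emph{interior continuity} is then obtained from the local Lipschitz estimate Lemma~\ref{lipschitz1}, not from the comparison principle.

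A second, related gap is the boundary attainment. You argue that $u(z)=g(z)$ pointwise on $\partial\Omega$ from $\underline u=g$ and $u\le g$, but the envelopes $u^*$ and $u_*$ are defined via interior limits, and there is nothing in the $\sup$/$\inf$ definition alone forcing $u^*(z)\le g(z)$ or $u_*(z)\ge g(z)$, nor continuity of the candidate solution up to $\partial\Omega$. This is exactly where the paper expends real effort, constructing the barrier $\chi(x)=|x|^\alpha-r_1^\alpha$ of \eqref{proof:barr2} and verifying it is a supersolution near the boundary to conclude $\lim_{x\to z}v(x)=g(z)$. Without a barrier argument of this kind, the statement ``both equal $g$ on $\partial\Omega$'' is unjustified, and the hypothesis $v\ge u$ on $\partial\Omega$ needed in Theorem~\ref{Comp.Prin} is not available.
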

	\begin{proof}
		The proof for uniqueness, on the other hand, relies entirely on Theorem \ref{Comp.Prin}. The existence result is obtained via the established Perron's method, coupled with the requirement for a carefully constructed barrier function near the boundary. A brief outline of the proof is included here. We introduce the set $\mathcal{T}$, defined as the collection of all supersolutions that are bounded above by $\overline{u}$
		\begin{align*}
			\mathcal{T} = \left\{\nu \in \mbox{LSC}(\overline{\Omega}):~\nu \mbox{ is a supersolution to }\eqref{barrel-method} \mbox{ and }\nu \leq \overline{u}\right\}.
		\end{align*}
		Also, we put $v(x) = \displaystyle \inf_{\nu \in \mathcal{T}}\nu(x)$ and
		\begin{align*}
			v^\ast(x) = \lim\limits_{r \to 0} ~\inf\left\{v(y):~y\in\overline{\Omega},|x-y| \leq r\right\},
		\end{align*}
		be the lower semicontinuous (LSC) envelope of $v$ (see \cite[Page 22]{CIL1992}), ensuring that $v^\ast$ is  the largest LSC function less than or equal to $v$. Because every function in $\mathcal{T}$ is a supersolution, $v$ satisfies the boundary condition $v \geq g$ on $\partial \Omega$. The primary goal is to show that $v^\ast$ belongs to the set $\mathcal{T}$ which would then imply $v = v^\ast$. By Lemma \ref{lipschitz1}, we also deduce $v$ is locally Lipschitz continuous in $\Omega$. Now suppose that $v^\ast$ is not a supersolution. Assume that for some $\varphi \in C^2(\Omega)$, we have $\varphi \prec_{x_0} v^\ast$ for some $x_0 \in \Omega$ and
		\begin{align*}
			\Delta_{\infty}^{\beta} \varphi(x_0) - cH(\varphi(x_0), \nabla \varphi(x_0)) - \lambda f(|x_0|, \varphi(x_0)) > h(x_0).
		\end{align*}
		Using continuity we can find a ball $B(x_0)$ around $x_0$ satisfying
		\begin{align}
			\label{proof:barr1}
			\Delta_{\infty}^{\beta} \varphi(x) - cH(\varphi(x), \nabla \varphi(x)) -\lambda f(|x|, \varphi(x)) > h(x)\mbox{ in }\overline{B(x_0)}.
		\end{align}
		Now, for every $\varepsilon >0$ we can find a pair $\left(\nu_\varepsilon,x_\varepsilon\right) \in \mathcal{T} \times B(x_0)$ satisfying
		\begin{align*}
			\nu_\varepsilon(x_\varepsilon) - \varphi(x_\varepsilon) = \displaystyle\inf_{B(x_0)}\left(\nu_\varepsilon-\varphi\right) < \varepsilon.
		\end{align*}
		Note that $x_\varepsilon \to x_0$ as $\varepsilon \to 0$. Also, $\varphi + \nu_\varepsilon(x_\varepsilon) - \varphi(x_\varepsilon)$ touches $\nu_\varepsilon$ at $x_\varepsilon$ from below. Hence, by the definition of supersolution we must have
		\begin{align*}
			\Delta_\infty^\beta \varphi(x_\varepsilon) -cH(\varphi(x_{\varepsilon}),\nabla \varphi(x_\varepsilon)) -\lambda f(|x_\varepsilon|,\nu_\varepsilon(x_\varepsilon)) \leq h(x_\varepsilon),
		\end{align*}
		and letting $\varepsilon \to 0$, we obtain a contradiction to \eqref{proof:barr1}. To fully establish this claim, the only remaining step is to demonstrate that $v^\ast \geq g$ on $\partial \Omega$. Thus the boundary condition is satisfied because, by Theorem \ref{Comp.Prin}, every function $\nu$ in the set $\mathcal{T}$ is greater than or equal to $\underline{u}$, which in turn satisfies the condition $\nu \geq g$ on $\partial \Omega$. Therefore, we conclude that both $v \geq \underline{u}$ and $v^\ast \geq g$ on $\partial \Omega$.\\
		We omit the detailed proof that $v$ is a subsolution in $\Omega$, as this is a standard result in the literature. The interested reader may refer to the arguments detailed in \cite[Theorem 1]{LW20081}. To complete the proof we must check that
		\begin{align*}
			\lim\limits_{x \in \Omega \to z}v(x) = g(z)\text{ for all }z \in \Omega. 
		\end{align*}
		Pick $z \in \partial \Omega$. To begin, we extend the boundary data $g$ continuously to $\mathbb{R}^n$. For a given $\varepsilon > 0$, we select a radius $r > 0$ small enough such that 
		\begin{align*}
			|g(x) - g(z)| \leq \varepsilon\mbox{ for }x \in B_r(z).
		\end{align*}
		We now proceed to construct a barrier function. Fix any $r_1 \in (0,r \wedge 1)$ where $r \wedge 1 = \min(r,1)$ and we also define the auxiliary function
		\begin{align}
			\label{proof:barr2}
			\chi(x) = |x|^\alpha - r_1^\alpha\mbox{ for some }\alpha \in \left(0,\frac{m}{m+q}\right),
		\end{align}
		is a classical barrier employed to establish boundary regularity via the comparison principle. The choice of a radial power-law form, $|x|^\alpha$, is motivated by the rotational symmetry of the infinity Laplacian operator. The constant term $-r_1^\alpha$ is subsequently subtracted to enforce the geometric condition that the function vanishes on the sphere $\partial B_{r_1}(z)$, i.e., $\chi(x) = 0$ for $|x| = r_1$. Next, we calculate the following function for the specific forms of Hamiltonians $H$
		\begin{align}
			\label{proof:barr3}
			\Delta_\infty^\beta \chi- cH(\chi,\nabla \chi).
		\end{align}
		Substituting \eqref{proof:barr2} into \eqref{proof:barr3} with each specific form of the operator $H$, we obtain for each $x\in\Omega$
		\begin{align}
			\label{proof:barr4}
			\begin{split}
				&|\nabla \chi(x)|^{-\beta}\langle D^2\chi(x)\nabla \chi(x),\nabla \chi(x) \rangle - c|\nabla\chi(x)|^m,\\[5pt]
				&|\nabla \chi(x)|^{-\beta}\langle D^2\chi(x)\nabla \chi(x),\nabla \chi(x) \rangle +c(\chi(x))^q|\nabla\chi(x)|^m, \\[5pt]
				&|\nabla \chi(x)|^{-\beta}\langle D^2\chi(x)\nabla \chi(x),\nabla \chi(x) \rangle - c(\chi(x))^q|\nabla\chi(x)|^m.
			\end{split} 
		\end{align}
		On the other hand, a simple calculation shows that
		\begin{align*}
			\nabla\chi(x) = \alpha |x|^{\alpha-2}x \text{ and } D^2\chi(x) = \alpha|x|^{\alpha-2}I + \alpha(\alpha-2)|x|^{\alpha-4}xx^T.
		\end{align*}
		Continuing the analysis, we obtain
		\begin{align*}
			&|\nabla \chi(x)| = \alpha|x|^{\alpha - 1},\\[3pt]
			&D^2\chi(x)\nabla \chi(x) = \left(\alpha|x|^{\alpha-2}I + \alpha(\alpha-2)|x|^{\alpha-4}xx^T\right)\left(\alpha |x|^{\alpha-2}x\right)= \alpha^2(\alpha-1)|x|^{2\alpha - 4}x,\\[3pt]
			&\langle D^2\chi(x)\nabla \chi(x),\nabla \chi(x) \rangle = \left(\alpha^2(\alpha-1)|x|^{2\alpha - 4}x\right)\left(\alpha |x|^{\alpha-2}x\right) = \alpha^3(\alpha - 1)|x|^{3\alpha - 4}.
		\end{align*}
		This allows us to deduce
		\begin{align*}
			|\nabla \chi(x)|^{-\beta}\langle D^2\chi(x)\nabla \chi(x),\nabla \chi(x) \rangle = \alpha^{3-\beta}|x|^{(3-\beta)\alpha-4+\beta}(\alpha-1).
		\end{align*}
		As a result, three expressions in \eqref{proof:barr4} become
		\begin{align*}
			\begin{split}
				&\alpha^{3-\beta}|x|^{(3-\beta)\alpha-4+\beta}(\alpha-1) - c\alpha^m |x|^{(\alpha-1)m},\\[5pt]
				&\alpha^{3-\beta}|x|^{(3-\beta)\alpha-4+\beta}(\alpha-1) +c\left(|x|^\alpha - r_1^\alpha\right)^q\alpha^m |x|^{(\alpha-1)m}, \\[5pt]
				&\alpha^{3-\beta}|x|^{(3-\beta)\alpha-4+\beta}(\alpha-1) - c\left(|x|^\alpha - r_1^\alpha\right)^q\alpha^m |x|^{(\alpha-1)m}, 
			\end{split} 
		\end{align*}
		for $|x| > r_1$. With $m < 3 -\beta$ and given the pre-specified condition $\alpha < \frac{m}{m+q} < 1$, since $q > 0$, and the exponent of $|x|$ can be easily analyzed as follows
		\begin{align*}
			(3-\beta)\alpha-4+\beta = 3\alpha -4 - \beta(\alpha-1) < (3\alpha -3) - \beta(\alpha - 1) = (3-\beta)(\alpha-1) < 0,
		\end{align*}
		and
		\begin{align*}
			(\alpha - 1)m < 0,~~\alpha q + (\alpha - 1)m <0.
		\end{align*}
		We can choose $r_1,\psi > 0$ small so that for $r_1 \leq |x| \leq r_1 + \psi$, which allows us to obtain
		\begin{align*}
			\Delta_\infty^\beta \left(\zeta\chi(x)\right) - cH(\zeta\chi(x),\zeta\nabla \chi(x)) - \lambda f(|x|,\zeta \chi(x)) \leq -\|h\|_{L^\infty(\Omega)}.
		\end{align*}
		for some large $\zeta$. To handle the geometry near the boundary point $z$, we rotate the domain $\Omega$ such that the ball $B_{r_1}$ is tangent to $\partial \Omega$ at $z$ from the exterior. This arrangement ensures that for any $x \in \Omega$ in a neighborhood of $z$, we have $|x| > r_1$. Consequently, our auxiliary function \eqref{proof:barr2} is strictly positive in this region. Next, we introduce the function
		\begin{align*}
			w = \min\left\{g(z) + \varepsilon + \zeta \chi,\overline{u}\right\}.
		\end{align*}
		In fact, it is also a supersolution, and hence $v \leq w$ in $\Omega$, we take the limit as $x \to z$. Since $z$ is the point of tangency for the exterior ball $B_{r_1}$, which implies $\chi(z) = 0$. Thus, it follows that  
		\begin{align*}
			\lim\limits_{x \to z}\sup v(x) \leq \lim\limits_{x \to z}w(x)  = g(z) + \varepsilon.
		\end{align*}
		Since this holds for any $\varepsilon >0$, we have $\lim\limits_{x \to z}\sup v(x) \leq g(z)$. A similar argument using a subsolution yields $\lim\limits_{x \to z}\inf v(x) \geq g(z)$, which completes the proof.
	\end{proof} 
		Finally, we present an extended version of \cite[Corollary 2.6]{Armstrong2011}. The proof follows by applying analogous techniques to those employed in \cite{Armstrong2011} for equations involving the infinity Laplacian and gradient terms. This result will play a crucial role in establishing Theorem \ref{theo: Liouville-type result I} and Theorem \ref{theo: Liouville-type result II} in the subsequent section.
	\begin{lemma}[\bf Stability]
		\label{lem_Stab}
		Let $\left\{\xi_k\right\}_{k=1}^\infty$ be a sequence of lower-semicontinuous functions on $\Omega$ such that $|\xi_j|\le K$ for all $j\geq 1$ and some $K>0$. Suppose that $\xi_k\to \xi$ locally uniformly in $\Omega$ for some $\xi\in \mathrm{LSC}(\overline{\Omega})$, and for each $k$, $u_k$ is a viscosity solution of the following Dirichlet problem
		\begin{align*}
			\begin{split}
				\left\{\begin{array}{clcl}
					\Delta_{\infty}^{\beta}u_k&=&c|\nabla u_k|^m+ \xi_k&\text{in } \Omega, \\[5pt]
					u_k &= &g&\text{on } \partial \Omega.
				\end{array}\right.
			\end{split}
		\end{align*}
		Then $\left\{u_k\right\}$ has a subsequence that converges locally uniformly in $\Omega$ to a solution $u$ of the Dirichlet problem	
		\begin{align*}
			\begin{split}
				\left\{\begin{array}{clcl}
					\Delta_{\infty}^{\beta}u&=&c|\nabla u|^m+\xi&\text{in } \Omega, \\[5pt]
					u&=&g&\text{on }\partial \Omega.
				\end{array}\right.
			\end{split}
		\end{align*}
	\end{lemma}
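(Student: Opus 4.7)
The plan is to follow the classical three--step viscosity stability template: obtain a uniform modulus of continuity for the family $\{u_k\}$, extract a locally uniformly convergent subsequence by Arzelà--Ascoli, and then pass to the limit in the viscosity inequalities using the standard test--function perturbation argument. The two technical ingredients specific to the present setting are the local Lipschitz estimate of Lemma~\ref{lipschitz1} and the comparison principle of Theorem~\ref{Comp.Prin}, both of which apply since the Hamiltonian $H(u,\nabla u)=|\nabla u|^{m}$ and the source $\xi_k$ are of the admissible form (here $f\equiv 0$, so the monotonicity hypothesis on $f$ is trivially met).

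First I would establish a uniform $L^{\infty}$ bound on the sequence. Since $|\xi_k|\le K$ and the Dirichlet data $g\in C(\partial\Omega)$ is fixed, the comparison principle together with standard radial barriers (of the type built in the proof of Theorem~\ref{theo:sandwich}) produces constants $M>0$ independent of $k$ such that $\|u_k\|_{L^\infty(\Omega)}\le M$. Lemma~\ref{lipschitz1}, applied with $c_1=c$ and $c_2$ absorbing the bound $K$ on $\xi_k$, then gives a local Lipschitz estimate for each $u_k$ in $\Omega$ whose constant depends only on $K$, $M$, $c$, and the compact subdomain, hence is \emph{uniform} in $k$. By Arzelà--Ascoli, a subsequence, still denoted $u_k$, converges locally uniformly in $\Omega$ to a function $u$ which is automatically locally Lipschitz and satisfies $u=g$ on $\partial\Omega$ (the boundary values are passed using the uniform modulus supplied by the barriers).

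The second and more delicate step is to verify that $u$ is a viscosity solution. I would argue that $u$ is a subsolution; the supersolution argument is symmetric. Let $\varphi\in C^{2}$ touch $u$ strictly from above at $x_{0}\in\Omega$. Local uniform convergence provides points $x_{k}\to x_{0}$ at which $u_{k}-\varphi$ attains a local maximum. When $\nabla\varphi(x_{0})\neq 0$, continuity gives $\nabla\varphi(x_{k})\neq 0$ for large $k$, so the viscosity subsolution inequality for $u_{k}$,
\begin{equation*}
|\nabla\varphi(x_{k})|^{-\beta}\bigl\langle D^{2}\varphi(x_{k})\nabla\varphi(x_{k}),\nabla\varphi(x_{k})\bigr\rangle\ge c|\nabla\varphi(x_{k})|^{m}+\xi_{k}(x_{k}),
\end{equation*}
passes to the limit directly, using that $\xi_{k}\to\xi$ locally uniformly and $\xi$ is lower semicontinuous, to yield the required inequality for $\varphi$ at $x_{0}$.

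The main obstacle, as usual for the $\beta$--infinity Laplacian, is the degenerate case $\nabla\varphi(x_{0})=0$, which is genuine only for $\beta=2$ where Definition~\ref{Vis_sol}(iii) replaces the operator by $M(D^{2}\varphi(x_{0}))$. Here I would argue along two sub-cases. If $\nabla\varphi(x_{k})=0$ along a subsequence, item (iii) of Definition~\ref{Vis_sol} applied to $u_{k}$ gives $M(D^{2}\varphi(x_{k}))\ge c\,H(u_{k}(x_{k}),0)+\xi_{k}(x_{k})=\xi_{k}(x_{k})$, and continuity of $D^{2}\varphi$ together with lower semicontinuity of $\xi$ yields the limit inequality. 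If instead $\nabla\varphi(x_{k})\neq 0$ for all large $k$ but $\nabla\varphi(x_{k})\to 0$, I would use the standard trick of applying the subsolution inequality to the perturbed test function $\varphi_{\varepsilon}(x)=\varphi(x)+\varepsilon|x-x_{0}|^{2}/2$ (or equivalently exploit the elementary bound $|\nabla\varphi(x_{k})|^{-\beta}\langle D^{2}\varphi(x_{k})\nabla\varphi(x_{k}),\nabla\varphi(x_{k})\rangle\le M(D^{2}\varphi(x_{k}))\,|\nabla\varphi(x_{k})|^{2-\beta}$ valid for $\beta\le 2$), so that sending $k\to\infty$ and then $\varepsilon\to 0$ recovers $M(D^{2}\varphi(x_{0}))\ge\xi(x_{0})$. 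Uniqueness of the full limit, and hence convergence of the whole sequence whenever the limit problem has a unique solution, then follows from Theorem~\ref{Comp.Prin}.
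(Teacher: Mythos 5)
The paper offers no proof of this lemma; it only states that ``[t]he proof follows by applying analogous techniques to those employed in \cite{Armstrong2011}.'' Your proposal reconstructs exactly that standard template (uniform $L^\infty$ via barriers, uniform local Lipschitz via Lemma~\ref{lipschitz1}, Arzel\`a--Ascoli, then passage to the limit in the viscosity inequalities), so the structure is correct and consistent with what the authors have in mind.

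Two technical points deserve attention. First, the quadratic perturbation $\varphi_\varepsilon(x)=\varphi(x)+\tfrac{\varepsilon}{2}|x-x_0|^2$ does \emph{not} fix the degenerate case $\nabla\varphi(x_0)=0$, since $\nabla\varphi_\varepsilon(x_0)=\nabla\varphi(x_0)=0$ as well; it is the other observation you mention parenthetically --- the elementary bound $|p|^{-2}\langle Ap,p\rangle\le M(A)$, which for $\beta=2$ gives $M(D^2\varphi(x_k))\ge c|\nabla\varphi(x_k)|^m+\xi_k(x_k)$ --- that actually closes this sub-case cleanly, using $m>0$.

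Second, the claim that ``the supersolution argument is symmetric'' glosses over an asymmetry created by the semicontinuity assumption. In the subsolution limit one needs $\liminf_k \xi_k(x_k)\ge\xi(x_0)$; after replacing $\xi_k(x_k)$ by $\xi(x_k)$ via locally uniform convergence, this follows from lower semicontinuity of $\xi$. In the supersolution limit one instead needs $\liminf_k\xi_k(x_k)\le\xi(x_0)$, hence $\liminf_k\xi(x_k)\le\xi(x_0)$, which requires \emph{upper} semicontinuity --- not supplied by the hypothesis $\xi\in\mathrm{LSC}(\overline\Omega)$. This gap is immaterial in the paper's applications (there the $\xi_k$ are continuous, so the locally uniform limit $\xi$ is in fact continuous and both directions go through), but your proof as written should either point this out or note that in practice $\xi$ is continuous; as it stands, invoking symmetry silently assumes continuity of $\xi$.
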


	\section{\bf Radial analysis}
	\label{sec: 4}
	\noindent In this intermediate section, we pause to analyze the radial boundary value problem
	\begin{align}
		\label{gene_main_rad}
		\tag{A}
		\left\{
		\begin{array}{cccc}
			\Delta_\infty^\beta u &= &cH(u,\nabla u) + \lambda f(|x - x_0| -\rho,u) &\mbox{in } B_R(x_0),\\[5pt]
			u &= &d &\mbox{on }\partial B_R(x_0),
		\end{array}
		\right.
	\end{align}
	where $\beta\in [0,2]$, the parameters satisfy $\lambda >0,d > 0$ and the constant $c$ is chosen depending on $H$ such that the quantity $cH$ remains positive, and the nonlinearities are modeled as in \eqref{Hf}.\\
	The direct method to solve this problem was introduced and analyzed in the paper "Infinity Laplacian equation with strong absorptions" by Araújo, Leitão, and Teixeira \cite[Section 5]{ALT2016}, where the authors treated the infinity Laplace equation $\Delta_\infty u=\lambda(u^+)^{\gamma}$ and they employed a specific line of reasoning to demonstrate the radial symmetry of $u$ via uniqueness and $\mathcal{O}(n)$ invariance. Similarly, for our class of problems, by uniqueness and Lemma \ref{lem:rotation}, it is plain that the solution of such a boundary value problem is radially symmetric. Indeed, for any $O \in \mathcal{O}(n)$, the function $v(x - x_0) := u(O(x-x_0))$ solves the same boundary value problem. Combined with the uniqueness of solutions, this yields $v(x) = u(x)$. Since $O \in \mathcal{O}(n)$ was taken arbitrary, it follows that $u$ is radially symmetric.\\
	On the other hand, let us put $u(x)=h(r)$ with $r=|x-x_0|$. We calculate the gradient and the Hessian matrix of $u$ for $x\neq x_0$ as follows 
	\begin{align*}
		\nabla u(x) = h'(r)e_r; ~
		D^2u(x)=h''(r)(e_r e_r^T)+\frac{h'(r)}{r}\big(I-e_r e_r^T\big).
	\end{align*}
	where $e_r=\dfrac{x-x_0}{|x-x_0|}$. 
	Hence, we have the identity
	\begin{align*}
		\Delta_\infty^\beta u(x) = |h'(r)|^{2-\beta}h''(r),\text{ for all }x\neq x_0.
	\end{align*}
	Note that in our barrier construction with a dead-core, we have $h'(r)\ge0$ (in particular at $r=\rho$). Thus we can write $|h'|^{2-\beta} = (h')^{2-\beta}$.\\
	To begin with form (I) of the Hardy-Hénon type equations, the system \eqref{gene_main_rad} can be rewritten as
	\begin{align}
		\label{gene_main_rad1}
		\tag{A1}
		\left\{
		\begin{array}{clcl}
			\Delta_\infty^\beta u &= &cH(u,\nabla u) + \lambda \left(|x - x_0| -\rho\right)^\alpha (u^+)^\gamma &\mbox{in } B_R(x_0),\\[5pt]
			u &= &d &\mbox{on }\partial B_R(x_0).
		\end{array}
		\right.
	\end{align}
	Equivalently, the system \eqref{gene_main_rad1} can be divided into four cases
	\begin{align}
		\label{rad_2_main}
		\begin{split}
			&\left\{
			\begin{array}{clcl}
				\Delta_\infty^\beta u &= &\lambda \left(|x - x_0| -\rho\right)^\alpha (u^+)^\gamma &\mbox{in } B_R(x_0),\\[5pt]
				u &= &d &\mbox{on }\partial B_R(x_0).
			\end{array}
			\right.\\
			&\left\{
			\begin{array}{clcl}
				\Delta_\infty^\beta u &= &c|\nabla u|^m + \lambda \left(|x - x_0| -\rho\right)^\alpha (u^+)^\gamma &\mbox{in } B_R(x_0),\\[5pt]
				u &= &d &\mbox{on }\partial B_R(x_0).
			\end{array}
			\right.\\
			&\left\{
			\begin{array}{clcl}
				\Delta_\infty^\beta u &= &-cu^q|\nabla u|^m + \lambda \left(|x - x_0| -\rho\right)^\alpha (u^+)^\gamma &\mbox{in } B_R(x_0),\\[5pt]
				u &= &d &\mbox{on }\partial B_R(x_0).
			\end{array}
			\right.\\
			&\left\{
			\begin{array}{clcl}
				\Delta_\infty^\beta u &= &cu^q|\nabla u|^m + \lambda \left(|x - x_0| -\rho\right)^\alpha (u^+)^\gamma &\mbox{in } B_R(x_0),\\[5pt]
				u &= &d &\mbox{on }\partial B_R(x_0).
			\end{array}
			\right.
		\end{split}
	\end{align}
	where $d,q$ and $m$ are positive constants, $0 < \rho < R$, and $x_0 \in \mathbb{R}^n$. Here we allow the Thiele modulus $\lambda > 0$ to be arbitrary, in order to broaden the scope of our analysis. Next, we consider the following ODE related to \eqref{rad_2_main}:
	\begin{align}
		\label{eq: rad_2_1}
		\begin{array}{lll}
			h''(s)(h'(s))^{2-\beta} &= \lambda s^\alpha (h(s)^+)^\gamma&\mbox{in } (0,T),\\[10pt]
			h''(s)(h'(s))^{2-\beta} &= c|h'(s)|^m + \lambda s^\alpha (h(s)^+)^\gamma &\mbox{in } (0,T),\\[10pt]
			h''(s)(h'(s))^{2-\beta} &= -c(h(s))^q|h'(s)|^m + \lambda s^\alpha (h(s)^+)^\gamma &\mbox{in } (0,T),\\[10pt]
			h''(s)(h'(s))^{2-\beta} &= c(h(s))^q|h'(s)|^m+ \lambda s^\alpha (h(s)^+)^\gamma &\mbox{in } (0,T),\\
		\end{array}
	\end{align}
	satisfying the initial condition: $h(0) = 0$ and $h(T) = d$. To solve \eqref{eq: rad_2_1}, let $h(s) = \tau s^p$, and importantly, for $h(s)$ to have zero gradient at the core (i.e., to exhibit a "dead-core" or "plateau"), we require $h'(0^+) = 0$. Now, we have the derivatives $h'$ and $h''$ as follows: 
	\begin{align*}
		h'(s) = \tau ps^{p-1} \mbox{ and } h''(s) = \tau p(p-1)s^{p-2}.
	\end{align*}
	Note that the condition for this derivative to approach $0$ as $s \to 0$ is precisely $p > 1$. Furthermore, it is necessary to have $\tau > 0$. By direct calculations, the four forms in \eqref{eq: rad_2_1} become
	\begin{align}
		\label{proof_rad_2_1}
		\begin{array}{ll}
			\tau^{3-\beta} p^{3-\beta}(p-1) s^{(p-2) + (p-1)(2-\beta)} &= \lambda \tau^\gamma  s^{p\gamma + \alpha},\\[10pt]
			\tau^{3-\beta} p^{3-\beta}(p-1) s^{(p-2) + (p-1)(2-\beta)} &= c\tau^{m}p^{m}s^{(p-1)m} + \lambda \tau^\gamma  s^{p\gamma + \alpha},\\[10pt]
			\tau^{3-\beta} p^{3-\beta}(p-1) s^{(p-2) + (p-1)(2-\beta)} &= -c\tau^{q+m}s^{pq+(p-1)m}p^m + \lambda \tau^\gamma  s^{p\gamma + \alpha},\\[10pt]
			\tau^{3-\beta} p^{3-\beta}(p-1) s^{(p-2) + (p-1)(2-\beta)} &= c\tau^{q+m}s^{pq+(p-1)m}p^m+ \lambda \tau^\gamma  s^{p\gamma + \alpha}.
		\end{array}
	\end{align}
	With the first equation in $\eqref{proof_rad_2_1}$, we can easily balance the powers of $s$ between the left-hand side (LHS) and the right-hand side (RHS) as follows
	\begin{align}\label{system}
		\left\{
		\begin{array}{ccc}
			(p-2) + (p-1)(2-\beta) &= &p\gamma + \alpha, \\[5pt]
			\tau^{3-\beta} p^{3-\beta}(p-1) &= &\lambda \tau^\gamma.
		\end{array}
		\right.
	\end{align} 
	Solving the system \eqref{system}, we have
	\begin{align}
		\label{radial_sol_c=0}
		\left\{
		\begin{array}{cll}
			p &= \displaystyle\frac{4 - \beta + \alpha}{3 - \beta - \gamma},\\[5pt] 
			\tau(\lambda,\gamma,\beta,\alpha) &= \sqrt[3-\beta-\gamma]{\lambda\cdot\frac{ (3-\beta-\gamma)^{4-\beta}}{(4-\beta+\alpha)^{3-\beta}(1+\alpha+\gamma)}},
		\end{array}
		\right., \quad \text{with}~\beta + \gamma < 3,~ \alpha + \gamma > -1. 
	\end{align}
	Applying the boundary condition $h(T) = d$ such that
	\begin{align*}
		d = \tau T^p \implies T = \left(\frac{d}{\tau}\right)^{1/p} = \left(\frac{d}{\tau(\lambda,\gamma,\beta,\alpha)}\right)^{\frac{3 - \beta - \gamma}{4 - \beta + \alpha}}.
	\end{align*}
	In summary, we obtain the solution 
	\begin{align*}
		h(s) = \tau(\lambda,\gamma,\beta,\alpha) \cdot s^{\frac{4-\beta + \alpha}{3 - \beta - \gamma}},s\in (0,T].
	\end{align*}
	To obtain an exact radial balance, we would need the same power of $s$ on all terms. In general, it is impossible for three different exponents, thus the natural strategy is to treat the two right-hand terms separately and obtain two critical profiles: one where the potential term $\lambda \left(|x - x_0| -\rho\right)_{+}^\alpha (u^+)^\gamma$ dominates, and one where the gradient term $|\nabla u|^{m}$ or $\pm u^q|\nabla u|^{m}$ dominates.\\
	Firstly, we will match the LHS term with the gradient term: solving $\eqref{proof_rad_2_1}_2$
	\begin{align}
		\label{proof_rad_2_1_a}
		\tag{a}
		\left\{
		\begin{array}{ccc}
			(p_1-2) + (p_1-1)(2-\beta) &= &(p_1-1)m, \\[5pt]
			\tau_1^{3-\beta} p_1^{3-\beta}(p_1-1) &= &c\tau_1^{m}p_1^{m}.
		\end{array}
		\right.
	\end{align}
	Next, we have a corresponding $\eqref{proof_rad_2_1}_3$ 
	\begin{align}
		\label{proof_rad_2_1_b}
		\tag{b}
		\left\{
		\begin{array}{ccc}
			(p_1-2) + (p_1-1)(2-\beta) &= &(p_1-1)m + p_1q, \\[5pt]
			\tau_1^{3-\beta} p_1^{3-\beta}(p_1-1) &= &-c\tau_1^{q+m}p_1^{m}.
		\end{array}
		\right.
	\end{align}
	And the final equation $\eqref{proof_rad_2_1}_4$ is that
	\begin{align}
		\label{proof_rad_2_1_c}
		\tag{c}
		\left\{
		\begin{array}{ccc}
			(p_1-2) + (p_1-1)(2-\beta) &= &(p_1-1)m + p_1q,\\[5pt]
			\tau_1^{3-\beta} p_1^{3-\beta}(p_1-1) &= &c\tau_1^{q+m}p_1^{m}. 
		\end{array}
		\right.
	\end{align}
	Combining $h(T) = d$, we collect three solutions
	\begin{align}
		\label{gradient_term_1}
		|\nabla u|^m \text{ then solution is } \left\{
		\begin{array}{cll}
			p_1 &= \displaystyle\frac{4 - m -\beta}{3 - m - \beta},\\[10pt] 
			\tau_1(c,\beta,m) &=  \displaystyle\sqrt[3 - m - \beta]{\frac{c\left(3- m - \beta\right)^{4 - m - \beta}}{(4 - m - \beta)^{3- m - \beta}}},\\[10pt] 
			T_1 &=  \displaystyle\left(\frac{d}{\tau_1(c,\beta,m)}\right)^{\frac{3 - m - \beta}{4 - m -\beta}},
		\end{array}
		\right. \quad m +\beta < 3, c > 0,
	\end{align}
	and
	\begin{align}
		\label{gradient_term_2}
		-u^q|\nabla u|^m \text{ then solution is }\left\{
		\begin{array}{cll}
			p_1 &= \displaystyle\frac{4 -\beta - m}{3 - m - \beta - q},\\[10pt] 
			\tau_1(c,\beta,m,q) &=  \displaystyle\sqrt[3 - m - \beta - q]{\frac{-c\left(3- m - \beta-q\right)^{4 - m - \beta}}{(4 - m - \beta)^{3- m - \beta-q}}},\\[10pt] 
			T_1 &=  \displaystyle\left(\frac{d}{\tau_1(c,\beta,m,q)}\right)^{\frac{3 - m - \beta - q}{4 - m -\beta}},
		\end{array}
		\right. \quad m + q + \beta < 3,~ c< 0,
	\end{align}
	and finally
	\begin{align}
		\label{gradient_term_3}
		u^q|\nabla u|^m \text{ then solution is }\left\{
		\begin{array}{cll}
			p_1 &= \displaystyle\frac{4 -\beta - m}{3 - m - \beta - q},\\[10pt] 
			\tau_1(c,\beta,m,q) &=  \displaystyle\sqrt[3 - m - \beta - q]{\frac{c\left(3- m - \beta-q\right)^{4 - m - \beta}}{(4 - m - \beta)^{3- m - \beta-q}}},\\[10pt] 
			T_1 &=  \displaystyle\left(\frac{d}{\tau_1(c,\beta,m,q)}\right)^{\frac{3 - m - \beta - q}{4 - m -\beta}},
		\end{array}
		\right.
	\end{align}
	where $m + q + \beta < 3,~c>0$.\\
	If one builds $h(s) = \tau_1s^{p_1}$, then $h$ is a solution of $\Delta_{\infty}^\beta u = c |\nabla u|^{m}$ or $\Delta_{\infty}^\beta u = \pm c u^q|\nabla u|^{m}$. For the full equation \eqref{rad_2_main} one must check that the gradient term is lower order when using this profile. Secondly, we will balance the LHS exponent with the potential exponent. Indeed, the solution of this case is exactly the solution of the first equation \eqref{rad_2_main}, i.e., the function $h(s) = \tau_2 s^{p_2}$ is a solution of $\Delta_{\infty}^\beta u= \lambda \left(|x - x_0| -\rho\right)_{+}^\alpha (u^+)^\gamma$. Again, for the full equation \eqref{rad_2_main} one must check that the potential term is lower order in this regime.\\
	Now, let us put the two cases together. We define the following quantities
	\begin{align}
		\label{cond_2}
		&p := \min\left\{p_1,p_2\right\}, \mbox{ and } \tau = \begin{cases}
			\tau_1(c,\beta,m,q), ~ \mbox{if } p_2 >  p_1\\
			\tau_2(\lambda,\gamma,\beta,\alpha), ~ \mbox{if } p_1 > p_2
		\end{cases},\\\nonumber
		&T = \begin{cases}
			T_1, ~ \mbox{if } p_2 > p_1\\
			T_2, ~ \mbox{if } p_1 > p_2
		\end{cases}, \mbox{ or we can write } T = \left(\frac{d}{\tau}\right)^{1/p}.
	\end{align} 
	Remark that $\tau_1(c,\beta,m,q)$ and $p_1$ are defined in \eqref{gradient_term_1},\eqref{gradient_term_2}, and \eqref{gradient_term_3}, whereas $\tau_2(\lambda,\gamma,\beta,\alpha)$ and $p_2$ are defined in \eqref{radial_sol_c=0}.
	Thus, we can write the general formulation of this case
	\begin{align*}
		h(s) = \tau s^{p}.
	\end{align*}
	\noindent On the other hand, we will consider form (II) of the Hardy-Hénon type equations then \eqref{gene_main_rad} becomes
	\begin{align}
		\label{final_main_1}
		\tag{A2}
		\left\{
		\begin{array}{cccc}
			\Delta_\infty^\beta u &= &cH(u,\nabla u) + \lambda\left[1 + (|x-x_0|-\rho)^2\right]^{-\alpha}u^{\gamma} &\mbox{in } B_R(x_0),\\[5pt]
			u &= &d &\mbox{on }\partial B_R(x_0),
		\end{array}
		\right.
	\end{align}
	where $H = |\nabla u|^m$ or $H = \pm u^q|\nabla u|^m$, $\alpha\in\left[0,\frac{(4-\beta-m)(3-\beta-\gamma)}{2(3-\beta-m)}\right)$, and $d > 0$ . Put $\rho = R - T \in (0,R)$, $s: = |x - x_0|-\rho \in (0,T]$. We consider the following ODE related to \eqref{final_main_1}:
	\begin{align}
		\label{final_ODE}
		(h')^{2-\beta}h'' = cH(h,h') + \lambda\phi(s)h^\gamma,~\mbox{ where }\phi(s): = (1+s^2)^{-\alpha}.
	\end{align}
   Similarly to the general equation above, we cannot balance the exponents with three terms simultaneously. Therefore, we will balance the LHS term against each term on the RHS individually. In fact, depending on the specific form of the Hamiltonian $H$, the exponents $p_1^*$ and $\tau_1^*$ correspond to the respective values calculated in \eqref{gradient_term_1}, \eqref{gradient_term_2}, and \eqref{gradient_term_3}. Therefore, we only need to find the exponents $p_2^*$ and $\tau_2^*$ for the potential term
	 \begin{align}
	 	\label{final_eq: rad_1_1}
	 	h''(s)(h'(s))^{2-\beta} = \lambda\phi(s)h(s)^\gamma \mbox{ in } (0,T].
	 \end{align}
	 satisfying the initial condition: $h(0) = 0$ and $h(T) = d$.
	Repeating the same steps as in form (I), we also get the results when the weight $\phi(s)$ is bounded, which is equivalent to $s$ being finite. Remark that for $0<s\le T$, we can easily check that
	\begin{align*}
		(1+T^2)^{-\alpha} \leq \phi(s) < 1.
	\end{align*}
	To obtain a supersolution of \eqref{final_eq: rad_1_1}, we must opt to analyze the "worst-case" scenario for a more robust argument, i.e., we choose $\phi(s) = (1+T^2)^{-\alpha}$. Hence, we solve the following equation
	\begin{align*}
		h''(s)(h'(s))^{2-\beta} = \lambda (1+T^2)^{-\alpha} h(s)^\gamma \mbox{ in } (0,T).
	\end{align*}
	We deduce
	\begin{align}
		\label{cond:r_finite}
		p_2^* = \displaystyle\frac{4 - \beta}{3 - \beta - \gamma}, \quad \tau_2^*(\lambda,T,\gamma,\beta,\alpha) =(1+T^2)^{-\frac{\alpha}{3-\beta-\gamma}}\tau_2^{**}(\lambda,\gamma,\beta), 
	\end{align}
	where $\tau_2^{**}(\lambda,\gamma,\beta)$ is defined by
	\begin{align}\label{2'}
		\tau_2^{**}(\lambda,\gamma,\beta)=\displaystyle\sqrt[3-\beta-\gamma]{\lambda\cdot\frac{ (3-\beta-\gamma)^{4-\beta}}{(4-\beta)^{3-\beta}(1+\gamma)}}.
	\end{align}
	Consequently, we obtain a supersolution after solving \eqref{final_ODE}
	\begin{align*}
		h(s) =  \chi_{\{p_2^*>p_1^*\}}\tau^*s^{p^*},
	\end{align*}
	where $p^*,\tau^*(\lambda,\gamma,\beta,\alpha)$, and $\chi_{\{p_2^*>p_1^*\}}$ are defined as follows
	\begin{align}\label{cond_3}
		p^*=\min\{p_1^*,p_2^*\}, \quad \tau^*=\begin{cases}
			\tau_1(c,\beta,m,q), \mbox{ if } p_2^* > p_1^*\\
			\tau_2^{**}(\lambda,\gamma,\beta), ~~\mbox{ if } p_1^* > p_2^*
		\end{cases},
	\end{align} 
	and
	\begin{align}
		\label{5'}
		\chi_{\{p_2^*>p_1^*\}} = \left\{\begin{array}{ll}
			\hspace*{1cm} 1,  &\mbox{if } p_2^* > p_1^*\\
			(1+T^2)^{-\frac{\alpha}{3-\beta-\gamma}}  &\mbox{if } p_1^* > p_2^*
		\end{array}\right..
	\end{align}
	Note that $\tau_1(c,\beta,m,q)$ is defined in \eqref{gradient_term_1},\eqref{gradient_term_2}, and \eqref{gradient_term_3} corresponding to three cases of the Hamiltonian, while $\tau_2^{**}(\lambda,\gamma,\beta)$ is defined in \eqref{2'}. Additionally, $p_1^*=p_1$ is given in \eqref{gradient_term_1},\eqref{gradient_term_2}, and \eqref{gradient_term_3}, and $p_2^*$ is given in \eqref{cond:r_finite}. 
	Using the condition $h(T)=d$, we find that $T$ is a solution to the following equation
	\begin{align}\label{1'}
		T=\sqrt[1/p^*]{\dfrac{d}{\chi_{\{p_2^*>p_1^*\}}\tau^*}}.
	\end{align}
	\noindent Fixing $x_0 \in \mathbb{R}^n$ and $0 < \rho < R$, we assume the dead-core compatibility condition
	\begin{align*}
		R > T.
	\end{align*}
	Define the following radially symmetric function $u: B_R(x_0) \setminus B_\rho(x_0) \to \mathbb{R}$ given by
	\begin{align*}
		u(x) = h\left(|x-x_0| - \rho\right),
	\end{align*}
	where $\rho = R -T$. One easily verifies that $u$ pointwise solves the equation
	\begin{align*}
		\Delta_\infty^\beta u(x) = cH(u,\nabla u) + \lambda f(|x - x_0| -\rho,u) \mbox{ in }B_R(x_0) \setminus B_\rho(x_0).
	\end{align*}
	The boundary conditions: $u \equiv 0$ on $\partial B_\rho(x_0)$, and $u \equiv d$ on $\partial B_R(x_0)$ are also satisfied. Moreover, by the construction, for each $y \in \partial B_\rho(x_0)$, we have
	\begin{align*}
		\lim\limits_{x \to y}\nabla u(x) = h'(0^+) \frac{y}{|y|} = 0.
	\end{align*}
	Thus, by extending $u \equiv 0$ in $B_\rho(x_0)$, we obtain a function in $B_R(x_0)$ satisfying 
	\begin{align*}
		\Delta_\infty^\beta u(x) = cH(u,\nabla u) + \lambda f(|x - x_0| -\rho,u) \mbox{ in }B_R(x_0).
	\end{align*}
	We conclude that $u$ is a radial supersolution/subsolution of the full equation provided the other term is lower order 
	\begin{align}
		\label{u_solution_2}
		u(x) =\Lambda \left[|x-x_0| - R + \left(\frac{d}{\Lambda}\right) ^{1/\nu}\right]_+^{\nu}.
	\end{align}
	where $\Lambda,\nu$ can be equal to either $\tau,p$ or $\tau^*,p^*$ depending on the form of the Hardy-Hénon type equation, the reader may see Table \ref{Table_1}. Its plateau is precisely $B_\rho(x_0)$, where
	\begin{align*}
		0 < \rho: = R - \left(\frac{d}{\Lambda}\right) ^{1/\nu}.
	\end{align*}
	Now, if $v$ is an arbitrary solution to
	\begin{align*}
		\Delta_\infty^\beta v = cH(v,\nabla v) + \lambda f(|x - x_0| -\rho,v) \mbox{ in }\Omega \subset \mathbb{R}^n,
	\end{align*}
	and $x_0 \in \Omega$ is an interior point, define $\mathcal{N}: (0,\mathrm{dist}(x_0,\partial \Omega)) \to [0,\infty)$ by
	\begin{align*}
		\mathcal{N}(R) := \sup_{B_R(x_0)}v.
	\end{align*}
	If for some $0 < R < \mathrm{dist}(x_0,\partial \Omega)$, we have
	\begin{align*}
		\mathcal{N}(R) < \Lambda R^{\nu},
	\end{align*}
	Then $x_0$ is a plateau point.\\
	The radial barrier constructed above will be crucial in deriving Liouville theorems in the next section.
	\section{\bf Liouville properties}
	\label{sec: 5}
	In this final section, we present our Liouville-type results for the infinity Laplacian equations introduced above. More precisely, we establish four theorems and their related corollaries. The proofs rely on a subtle combination of the comparison principle and the analysis of the radial version presented in Section \ref{sec: 4}.
	\begin{theorem}[\bf Liouville-type result I]
		\label{theo: Liouville-type result I}
		Given $\rho,\lambda > 0$ and $x_0 \in \mathbb{R}^n$. Let $u$ be a nonnegative entire viscosity solution to 
		\begin{align}
			\label{main_theorem_1_1}
			\Delta_\infty^\beta u = \lambda \mathcal{H}_{\rho,x_0}(|x|)(u^+)^\gamma \mbox{ in } \mathbb{R}^n,
		\end{align} 
		where the weight $\mathcal{H}_{\rho,x_0}(|x|) \lesssim \left(|x - x_0| - \rho\right)^\alpha_+$ and the conditions on the parameters $\gamma,\alpha$ are given in Table \ref{Table_1}. Suppose that
		\begin{align}
			\label{main_theorem_1_2}
			\displaystyle\limsup_{|x|\to\infty} \frac{u(x)}{|x|^{\frac{4-\beta+\alpha}{3-\beta-\gamma}}}  < \sqrt[3-\beta-\gamma]{\lambda\cdot\frac{ (3-\beta-\gamma)^{4-\beta}}{(4-\beta+\alpha)^{3-\beta}(1+\alpha+\gamma)}},
		\end{align}
		then $u \equiv 0$.
	\end{theorem}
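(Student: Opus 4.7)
My plan is to argue by contradiction: suppose $u(z_{0})>0$ at some $z_{0}\in\mathbb R^{n}$ and contradict the growth hypothesis \eqref{main_theorem_1_2} via the radial dead-core barrier from Section \ref{sec: 4}. Abbreviate $p:=(4-\beta+\alpha)/(3-\beta-\gamma)$ and $\tau:=\tau_{2}(\lambda,\gamma,\beta,\alpha)$, the critical exponent and constant of \eqref{radial_sol_c=0}, so that \eqref{main_theorem_1_2} reads $\limsup_{|x|\to\infty}u(x)/|x|^{p}<\tau$. Since $|x|/|x-z_{0}|\to 1$ at infinity, one extracts $\theta\in(0,\tau)$ and $R_{0}>0$ with $u(x)\le\theta|x-z_{0}|^{p}$ whenever $|x-z_{0}|\ge R_{0}$.

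For each $R>R_{0}$ set $d_{R}:=\sup_{\partial B_{R}(z_{0})}u\le\theta R^{p}$, $T_{R}:=(d_{R}/\tau)^{1/p}\le(\theta/\tau)^{1/p}R<R$, and define
\begin{equation*}
V_{R}(x):=\tau\bigl[\,|x-z_{0}|-(R-T_{R})\bigr]_{+}^{p}, \qquad x\in B_{R}(z_{0}).
\end{equation*}
By the Section \ref{sec: 4} construction with no Hamiltonian contribution, $V_{R}$ classically solves $\Delta_{\infty}^{\beta}V_{R}=\lambda(|x-z_{0}|-(R-T_{R}))_{+}^{\alpha}V_{R}^{\gamma}$ outside the plateau $B_{R-T_{R}}(z_{0})$ and vanishes on the plateau, hence is a viscosity solution of this model equation on all of $B_{R}(z_{0})$. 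Reading the weight hypothesis $\mathcal{H}_{\rho,x_{0}}\lesssim(|\cdot-x_{0}|-\rho)_{+}^{\alpha}$ as the two-sided comparison inherited from the Hardy--H\'enon framework recalled in the introduction, and using the $\mathcal{O}(n)$-invariance of Lemma \ref{lem:rotation} together with the interior arbitrariness of $z_{0}$ to reduce to the situation $x_{0}=z_{0}$, $\rho=R-T_{R}$, the function $V_{R}$ then serves as a viscosity supersolution of \eqref{main_theorem_1_1} on $B_{R}(z_{0})$, after absorbing the implicit constant into $\lambda$.

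With $V_{R}\ge u$ on $\partial B_{R}(z_{0})$ by construction, I would invoke Theorem \ref{Comp.Prin} on $B_{R}(z_{0})$: the strict-inequality hypothesis is recovered by a $\lambda\mapsto\lambda+\varepsilon$ perturbation of the equation satisfied by $V_{R}$, followed by $\varepsilon\to 0^{+}$ via the stability Lemma \ref{lem_Stab}. This yields $u\le V_{R}$ on $B_{R}(z_{0})$, and evaluation at the plateau point $z_{0}$ gives $u(z_{0})\le V_{R}(z_{0})=0$, contradicting $u(z_{0})>0$.

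The main obstacle I foresee is the patching of $V_{R}$ across the free boundary $\partial B_{R-T_{R}}(z_{0})$, where $\nabla V_{R}$ vanishes and $\Delta_{\infty}^{\beta}$ degenerates. The restrictions $0\le\gamma<3-\beta$ and $-1-\gamma<\alpha$ ensure $p>1$ and hence $V_{R}\in C^{1}(B_{R}(z_{0}))$, after which the viscosity-supersolution property across the interface follows from the elementary fact---implicit in Section \ref{sec: 4}---that the zero function is a classical solution of the equation inside the plateau. A secondary delicate point is the sharpness of the constant $\tau$ in \eqref{main_theorem_1_2}: any multiplicative constant introduced by the two-sided comparison of the weight must be absorbed without shrinking this threshold, which is transparent in the archetypal case $\mathcal{H}_{\rho,x_{0}}(|x|)=(|x-x_{0}|-\rho)_{+}^{\alpha}$ and is achieved in general through a corresponding rescaling of $\lambda$ that preserves the structural sharpness.
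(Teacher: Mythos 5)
Your high-level strategy---sandwich $u$ below the radial dead-core barrier of Section~\ref{sec: 4} and push the plateau radius to infinity via comparison---is the same as the paper's, but two of your steps do not hold as stated. First, you center the barrier at the contradiction point $z_{0}$, whereas the equation's weight $\mathcal{H}_{\rho,x_{0}}$ is a function of $|x-x_{0}|$, not of $|x-z_{0}|$. Your $V_{R}$ satisfies the auxiliary radial equation $\Delta_\infty^\beta V_{R}=\lambda\bigl(|x-z_{0}|-(R-T_{R})\bigr)_{+}^{\alpha}V_{R}^{\gamma}$, and for it to be a viscosity supersolution of \eqref{main_theorem_1_1} one would need $\bigl(|x-z_{0}|-(R-T_{R})\bigr)_{+}^{\alpha}\le C\,\mathcal{H}_{\rho,x_{0}}(|x|)$ on $\{V_{R}>0\}$. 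The $\mathcal{O}(n)$-invariance of Lemma~\ref{lem:rotation} cannot manufacture this: it concerns rotations about the origin and can neither translate $x_{0}$ to $z_{0}$ nor replace the fixed radius $\rho$ by the $R$-dependent quantity $R-T_{R}$. Moreover, for $\alpha<0$ (which Table~\ref{Table_1} allows) the claimed inequality fails near $|x-z_{0}|=R-T_{R}$, where the left side blows up while $\mathcal{H}_{\rho,x_{0}}$ stays bounded. The paper avoids this issue entirely by centering the barrier at the equation's own $x_{0}$: the resulting pointwise bound $u(x)\le\theta\bigl[|x-x_{0}|-R\bigl(1-\Phi^{1/p}\bigr)\bigr]_{+}^{p}$, with $p=\tfrac{4-\beta+\alpha}{3-\beta-\gamma}$, tends to $0$ for every fixed $x$ as $R\to\infty$, so $u\equiv0$ on all of $\mathbb{R}^{n}$ without any contradiction point or translation argument.

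Second, your penalization $\lambda\mapsto\lambda+\varepsilon$ does not deliver the strict source gap that Theorem~\ref{Comp.Prin} requires. That theorem assumes $h_{1}>h_{2}$ uniformly on $\overline{\Omega}$, but a multiplicative perturbation of $\lambda$ contributes $\varepsilon\bigl(|x-z_{0}|-(R-T_{R})\bigr)_{+}^{\alpha}V_{R}^{\gamma}$, which vanishes identically on the dead core $\{V_{R}=0\}$ and degenerates at its boundary, so the strict ordering is lost exactly where the comparison is most delicate. The paper instead adds a constant $\varepsilon$ to the right-hand side of the Dirichlet problem, which preserves strict ordering uniformly, and only then passes to the limit via uniqueness (Theorem~\ref{theo:sandwich}) and the stability Lemma~\ref{lem_Stab}. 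If you re-center the barrier at $x_{0}$ and switch to the additive penalization, your argument collapses onto the paper's proof; your identification of $\tau$, $p$, $T_{R}$ and the remark that the patching across the free boundary needs $p>1$ are otherwise correct.
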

	\begin{proof}
		For each $\varepsilon > 0$, let $u_\varepsilon$ be the solution of the following penalized boundary value problem
		\begin{align*}
			\left\{
			\begin{array}{cccc}
				\Delta_\infty^\beta u_\varepsilon &= &\lambda \left(|x - x_0| - \rho\right)^\alpha_+(u_\varepsilon^+)^\gamma + \varepsilon &\mbox{in } B_R(x_0),\\[5pt]
				u_\varepsilon &= &\displaystyle\sup_{\partial B_R(x_0)} u&\mbox{on } \partial B_R(x_0).
			\end{array}
			\right.
		\end{align*}
		Now, fixing $R > \rho > 0$, let us consider $v: \overline{B_R(x_0)} \to \mathbb{R}$, the radial solution to the boundary value problem
		\begin{align*}
			\left\{
			\begin{array}{cccc}
				\Delta_\infty^\beta v &= &\lambda \left(|x - x_0| - \rho\right)^\alpha_+(v^+)^\gamma &\mbox{in } B_R(x_0),\\[5pt]
				v &= &\displaystyle\sup_{\partial B_R(x_0)} u &\mbox{on } \partial B_R(x_0).
			\end{array}
			\right.
		\end{align*}
		Thanks to the assumption $\mathcal{H}_{\rho,x_0}(|x|) \lesssim \left(|x - x_0| - \rho\right)^\alpha_+$, we use the comparison principle (Theorem \ref{Comp.Prin}) to conclude that 
		\begin{align}
			\label{proof_1_1}
			u \leq u_\varepsilon \mbox{ in } B_R(x_0).
		\end{align}
		By using the stability (Lemma \ref{lem_Stab}) and the uniqueness of the solution to the Dirichlet problem (Theorem \ref{theo:sandwich}), we ensure that there is a subsequence $(u_{\varepsilon_j})$ such that $u_{\varepsilon_j} \to v$ uniformly on $\overline{B_R(x_0)}$. Hence, from \eqref{proof_1_1}, we obtain 
		\begin{align}
			\label{proof_1_*}
			u \leq v \mbox{ in } B_R(x_0).
		\end{align}
		It follows from hypothesis \eqref{main_theorem_1_2}, taking $R$ sufficiently large ($R\gg 1$) such that
		\begin{align}
			\label{proof_1_2}
			\displaystyle\sup_{\partial B_R(x_0)} \frac{u(x)}{R^{\frac{4-\beta+\alpha}{3-\beta-\gamma}}} \leq \Phi \theta,\text{ with }\theta=\sqrt[3-\beta-\gamma]{\lambda\cdot\frac{ (3-\beta-\gamma)^{4-\beta}}{(4-\beta+\alpha)^{3-\beta}(1+\alpha+\gamma)}},
		\end{align}
		for some $0<\Phi <1$. From \eqref{u_solution_2}, it follows that the solution $v = v_R$ is given by
		\begin{align}
			\label{proof_1_3}
			v(x) =\theta \left[|x-x_0| - R + \left(\frac{\displaystyle\sup_{\partial B_R(x_0)} u}{\theta}\right)^{\frac{3 - \beta - \gamma}{4 - \beta + \alpha}}\right]^{\frac{4-\beta + \alpha}{3 - \beta - \gamma}}_+.
		\end{align}
		Combining \eqref{proof_1_2} and \eqref{proof_1_3}, we deduce
		\begin{align}\label{proof_1'_1}
			v(x) &\leq \theta \left[|x-x_0| - R\left(1 - \Phi^{\frac{3 - \beta - \gamma}{4 - \beta + \alpha}}\right)\right]^{\frac{4-\beta + \alpha}{3 - \beta - \gamma}}_+.
		\end{align}
		Combining \eqref{proof_1_*} and \eqref{proof_1'_1}, we get
		\begin{align*}
			u(x) \leq \theta \left[|x-x_0| - R\left(1 - \Phi^{\frac{3 - \beta - \gamma}{4 - \beta + \alpha}}\right)\right]^{\frac{4-\beta + \alpha}{3 - \beta - \gamma}}_+.
		\end{align*}
		Letting $R \to \infty$, we conclude the proof of the theorem.
	\end{proof}
	\begin{corollary}
		The statement of Theorem \ref{theo: Liouville-type result I} is sharp, in the sense that we cannot extend the strict inequality in \eqref{main_theorem_1_2}. Indeed, take the function $h:\mathbb R^n\to\mathbb R$ and the weight $\mathcal{H}_{\rho,x_0}:\mathbb R^n\to \mathbb R$ defined as
		\begin{align*}
			h(x) = \sqrt[3-\beta-\gamma]{\lambda\cdot\frac{ (3-\beta-\gamma)^{4-\beta}}{(4-\beta+\alpha)^{3-\beta}(1+\alpha+\gamma)}}|x|^{{\frac{4-\beta + \alpha}{3 - \beta - \gamma}}}\text{ and }\mathcal{H}_{\rho,x_0}(|x|)=|x|^{\alpha}.
		\end{align*}
		A straightforward calculation indicates that $h$ is a solution to \eqref{main_theorem_1_1}. Furthermore, we can easily check that
		\begin{align*}
			\limsup_{|x|\to\infty}{\dfrac{h(x)}{|x|^{\frac{4-\beta+\alpha}{3-\beta-\gamma}}}}=\sqrt[3-\beta-\gamma]{\lambda\cdot\frac{ (3-\beta-\gamma)^{4-\beta}}{(4-\beta+\alpha)^{3-\beta}(1+\alpha+\gamma)}}.
		\end{align*}
	\end{corollary}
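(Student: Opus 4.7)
The plan is to verify the sharpness claim by direct substitution, since the candidate pair $(h,\mathcal{H}_{\rho,x_0})$ is precisely the radial extremal profile from Section~\ref{sec: 4}, globalized to $\mathbb{R}^n$ with the critical constant and no dead-core. Concretely, I will (a) re-derive $\Delta_\infty^\beta h$ for $h(x)=\tau |x|^p$ using the radial identity $(h')^{2-\beta}h''$, (b) read off both the exponent $p$ and the coefficient $\tau$ by balancing against $\lambda |x|^\alpha h^\gamma$, and (c) compute the $\limsup$.

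For step (a), inserting $h(s)=\tau s^p$ into $\Delta_\infty^\beta u(x)=(h'(|x|))^{2-\beta}h''(|x|)$ yields, for $x\ne 0$,
\begin{align*}
\Delta_\infty^\beta h(x) \;=\; \tau^{3-\beta}\, p^{3-\beta}\,(p-1)\, |x|^{(3-\beta)p-(4-\beta)},
\end{align*}
which is to be matched with $\lambda |x|^\alpha h(x)^\gamma = \lambda \tau^\gamma |x|^{\alpha+p\gamma}$. Step (b) is then exactly the algebra used around \eqref{system}--\eqref{radial_sol_c=0}: equating exponents forces $p=\frac{4-\beta+\alpha}{3-\beta-\gamma}$, and substituting $p-1=\frac{1+\alpha+\gamma}{3-\beta-\gamma}$ into the coefficient identity $\tau^{3-\beta-\gamma}=\lambda\, p^{-(3-\beta)}(p-1)^{-1}$ reproduces precisely the constant claimed in the statement. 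Step (c) is then immediate: $h(x)/|x|^p\equiv \tau$ for all $x\ne 0$, so the $\limsup$ equals $\tau$, which confirms that the strict inequality in \eqref{main_theorem_1_2} cannot be relaxed.

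The only genuinely delicate point will be the viscosity verification at the origin, where $h$ is merely $C^1$ and both sides of \eqref{main_theorem_1_1} degenerate. The plan is to exploit that $p>1$ (guaranteed by the standing hypothesis $1+\alpha+\gamma>0$) gives $\nabla h(0)=0$, while $h\ge 0$ has a strict minimum at the origin. For $\beta\in[0,2)$, any $C^2$ test function touching $h$ at $0$ from above or below inherits zero gradient there, and both sides of \eqref{main_theorem_1_1} vanish; for the fully degenerate case $\beta=2$, one invokes clause (iii) of Definition~\ref{Vis_sol} directly, using the sign of $M(D^2\varphi(0))$ and $m(D^2\varphi(0))$ forced by the strict minimum of $h$ at $0$. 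This origin check is the sole place where routine calculation does not suffice; the rest of the argument is exponent-matching inherited from Section~\ref{sec: 4}.
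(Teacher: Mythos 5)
Your proposal is correct and follows essentially the same route as the paper: the corollary is established by the same exponent-and-coefficient matching already carried out around \eqref{system}--\eqref{radial_sol_c=0}, which the paper simply invokes as a ``straightforward calculation,'' and steps (a)--(c) of your plan reproduce that verbatim, including the identity $\tau^{3-\beta-\gamma}=\lambda\,p^{-(3-\beta)}(p-1)^{-1}$ that yields the claimed constant. The viscosity check at the origin is a genuine extra care not spelled out in the paper; your observation that $p>1$ forces $\nabla\varphi(0)=0$ (and that for $1<p<2$ no $C^2$ test function can touch $h$ from above at $0$ at all, making the subsolution test vacuous there) is the right mechanism, and does not change the main argument.
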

	\begin{corollary}
		The conclusion of Theorem \ref{theo: Liouville-type result I} is still valid if $u$ is a subsolution to problem \eqref{main_theorem_1_1}, and the proof in this case is entirely similar to that of Theorem \ref{theo: Liouville-type result I}.
		\end{corollary}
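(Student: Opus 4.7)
My proof would be a direct adaptation of the argument for Theorem \ref{theo: Liouville-type result I}; the key observation is that the original proof never exploits the supersolution inequality satisfied by $u$. Every place where a property of $u$ is invoked is through the viscosity subsolution inequality $\Delta_\infty^\beta u \geq \lambda \mathcal{H}_{\rho,x_0}(|x|)(u^+)^\gamma$ at touching points from above. Since this is exactly what the corollary hypothesizes, the scheme of that proof transfers verbatim.

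Concretely, I would fix $R > \rho > 0$ and construct, for each $\varepsilon > 0$, the penalized Dirichlet solution $u_\varepsilon$ on $B_R(x_0)$ with boundary data $\sup_{\partial B_R(x_0)} u$, whose existence is supplied by Theorem \ref{theo:sandwich}. The crucial comparison step uses the weight bound $\mathcal{H}_{\rho,x_0}(|x|) \lesssim (|x-x_0|-\rho)_{+}^{\alpha}$ together with the non-negativity of $(u^+)^\gamma$ to transfer the subsolution inequality for the $\mathcal{H}$-equation into a subsolution inequality for the cleaner reference equation whose source is $\lambda(|x-x_0|-\rho)_{+}^{\alpha}(u^+)^\gamma$. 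Applying Theorem \ref{Comp.Prin} with the source gap $\varepsilon>0$ then yields $u \leq u_\varepsilon$ in $B_R(x_0)$. Stability (Lemma \ref{lem_Stab}) together with uniqueness (Theorem \ref{theo:sandwich}) allow one to extract a subsequence $u_{\varepsilon_j}$ converging locally uniformly to the radial solution $v = v_R$ of the unperturbed boundary value problem, whence $u \leq v$ in $B_R(x_0)$.

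The final step mirrors the conclusion of Theorem \ref{theo: Liouville-type result I}. The growth condition \eqref{main_theorem_1_2} supplies $\Phi \in (0,1)$ so that, for $R$ large enough, $\sup_{\partial B_R(x_0)} u \leq \Phi \theta R^{(4-\beta+\alpha)/(3-\beta-\gamma)}$ with $\theta$ as in \eqref{main_theorem_1_2}. Inserting this into the explicit radial barrier \eqref{proof_1_3} gives the pointwise estimate
\[
u(x) \leq \theta \Bigl[\,|x - x_0| - R\bigl(1 - \Phi^{(3-\beta-\gamma)/(4-\beta+\alpha)}\bigr)\,\Bigr]_{+}^{(4-\beta+\alpha)/(3-\beta-\gamma)},
\]
and letting $R \to \infty$ with $x$ fixed forces $u(x) \leq 0$, hence $u \equiv 0$ by non-negativity.

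The potential obstacle is conceptual rather than technical: one must verify that the single application of the comparison principle in the original argument depends only on $u$'s subsolution inequality, not on its full solution status. This is immediate from the statement of Theorem \ref{Comp.Prin}, which is already phrased for a sub/supersolution pair, so no new difficulty arises. This is precisely why the authors can state that the proof is \emph{entirely similar}.
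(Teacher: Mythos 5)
Your proposal is correct and coincides with the paper's intended argument: the paper justifies this corollary only by asserting ``the proof is entirely similar,'' and you have correctly pinpointed why---the sole point at which the hypothesis on $u$ enters the proof of Theorem~\ref{theo: Liouville-type result I} is the application of Theorem~\ref{Comp.Prin}, which requires only that $u$ be a viscosity subsolution, while the radial barrier $v$, the stability passage $u_{\varepsilon_j}\to v$, and the growth estimate \eqref{main_theorem_1_2} involve no further structure of $u$. No new argument is needed, so the verbatim repetition you outline is exactly what the authors have in mind.
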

		\begin{corollary}
			The result of Theorem \ref{theo: Liouville-type result I} is violated if $u$ is a supersolution to problem \eqref{main_theorem_1_1}. For example, $u\equiv c$, where $c$ is a positive constant, is a supersolution to \eqref{main_theorem_1_1} since
			\begin{align*}
				\Delta_{\infty}^{\beta}u=0\le \lambda \mathcal{H}_{\rho,x_0}(|x|) (u^+)^{\gamma} \text{ in }\mathbb R^n.
			\end{align*}
			However, we have
			\begin{align*}
		\limsup_{|x|\to\infty}{\dfrac{u(x)}{|x|^{\frac{4-\beta+\alpha}{3-\beta-\gamma}}}}=0<\sqrt[3-\beta-\gamma]{\lambda\cdot\frac{ (3-\beta-\gamma)^{4-\beta}}{(4-\beta+\alpha)^{3-\beta}(1+\alpha+\gamma)}}.
			\end{align*}
		\end{corollary}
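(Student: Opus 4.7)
The plan is simply to exhibit a concrete nonnegative, nontrivial function that is a viscosity supersolution of \eqref{main_theorem_1_1} and whose growth at infinity lies strictly below the critical threshold in \eqref{main_theorem_1_2}. I take $u\equiv c$ for an arbitrary constant $c>0$; the argument then splits into verifying the supersolution property and checking the growth bound, after which the contradiction with the Liouville conclusion is immediate.

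For the supersolution property I apply Definition \ref{Vis_sol} directly. Because $u$ is constant, any $C^2$ test function $\varphi$ with $u\succ_{x_0}\varphi$ must satisfy $\nabla\varphi(x_0)=0$ (otherwise moving slightly in the direction $\pm\nabla\varphi(x_0)$ would allow $\varphi$ to exceed $c$), and $D^2\varphi(x_0)\le 0$ in the sense of symmetric matrices. For $\beta\in[0,2)$ the fractional infinity Laplacian can be rewritten as $|\nabla\varphi|^{2-\beta}\langle D^2\varphi\,\hat n,\hat n\rangle$ with $\hat n=\nabla\varphi/|\nabla\varphi|$, which extends continuously to $0$ at stationary points; hence $\Delta_\infty^\beta\varphi(x_0)=0$, and the required supersolution inequality reduces to $0\le\lambda\mathcal{H}_{\rho,x_0}(|x_0|)c^\gamma$, which holds because $\mathcal{H}_{\rho,x_0}\ge 0$ and $c^\gamma\ge 0$. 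For $\beta=2$ the degenerate clause (iii) of Definition \ref{Vis_sol} applies, and it suffices to observe that $m(D^2\varphi(x_0))\le 0\le\lambda\mathcal{H}_{\rho,x_0}(|x_0|)c^\gamma$.

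The growth bound is immediate: under the parameter restrictions in Table \ref{Table_1} we have $3-\beta-\gamma>0$ and $\alpha>-1-\gamma$, so the exponent $(4-\beta+\alpha)/(3-\beta-\gamma)$ is strictly positive, and
\[
\limsup_{|x|\to\infty}\frac{c}{|x|^{(4-\beta+\alpha)/(3-\beta-\gamma)}}=0,
\]
which is strictly less than the positive right-hand side of \eqref{main_theorem_1_2}. Yet $u\equiv c\not\equiv 0$, so the conclusion of Theorem \ref{theo: Liouville-type result I} fails when the word ``solution'' is relaxed to ``supersolution''. The only mildly delicate point in the whole argument is specifying the value of $\Delta_\infty^\beta$ at stationary test points for $\beta\in(0,2)$; this is handled uniformly by the factor $|\nabla\varphi|^{2-\beta}$ vanishing at such points, so no case analysis beyond the separate $\beta=2$ clause is needed.
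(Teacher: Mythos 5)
Your argument is correct and follows the same route the paper takes: exhibit $u\equiv c>0$ as a nontrivial supersolution whose growth limsup vanishes and therefore lies strictly below the positive critical constant. The only difference is that you spell out carefully why a constant is a viscosity supersolution (stationary test points force $\nabla\varphi(x_0)=0$, so $\Delta_\infty^\beta\varphi(x_0)=0$ for $\beta\in[0,2)$ via the $|\nabla\varphi|^{2-\beta}$ factor, and clause (iii) of Definition \ref{Vis_sol} handles $\beta=2$), a verification the paper leaves implicit.
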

	\begin{theorem}[\bf Liouville-type result II]
		\label{theo: Liouville-type result II}
		Given $\rho,\lambda> 0$ and $x_0 \in \mathbb{R}^n$, let $u$ be a nonnegative entire viscosity solution to 
		\begin{align}
			\label{main_theorem_2_1}
			\Delta_\infty^\beta u = cH(u,\nabla u) + \lambda \mathcal{H}_{\rho,x_0}(|x|)(u^+)^\gamma \mbox{ in } \mathbb{R}^n,
		\end{align} 
		where the weight $\mathcal{H}_{\rho,x_0}(|x|) \lesssim \left(|x - x_0| - \rho\right)^\alpha_+$, and the conditions on the parameters $m,\gamma,\alpha,q$ are given in Table \ref{Table_1}. Suppose that
		\begin{align}
			\label{main_theorem_2_2}
			\displaystyle\limsup_{|x|\to\infty} \frac{u(x)}{|x|^{p}}  < \tau,
		\end{align}
		where $p$ and $\tau$ are defined in \eqref{cond_2}. Then $u \equiv 0$.
	\end{theorem}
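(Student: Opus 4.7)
The plan is to mirror the strategy of Theorem \ref{theo: Liouville-type result I}, replacing the purely potential radial profile by the generalized profile $h(s) = \tau s^p$ constructed in Section \ref{sec: 4}, whose exponent $p = \min\{p_1, p_2\}$ and constant $\tau$ are dictated by whichever of the gradient-Hamiltonian term or the Hardy-Hénon term dominates in the radial balance. The comparison principle of Theorem \ref{Comp.Prin} together with the stability of Lemma \ref{lem_Stab} (suitably adapted to each form of $H$, using the fact that $cH(u, \nabla u) \geq 0$ by the sign convention on $c$) allow us to compare $u$ against an explicit radial barrier on arbitrarily large balls, and the growth hypothesis \eqref{main_theorem_2_2} forces this barrier to collapse as the radius tends to infinity.

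More precisely, first I would fix $R > \rho > 0$ and for each $\varepsilon > 0$ consider the perturbed Dirichlet problem
\begin{align*}
	\Delta_\infty^\beta u_\varepsilon = cH(u_\varepsilon, \nabla u_\varepsilon) + \lambda \left(|x - x_0| - \rho\right)^\alpha_+ (u_\varepsilon^+)^\gamma + \varepsilon \quad \mbox{in } B_R(x_0),
\end{align*}
with boundary datum $u_\varepsilon = \sup_{\partial B_R(x_0)} u$ on $\partial B_R(x_0)$. The assumption $\mathcal{H}_{\rho, x_0}(|x|) \lesssim (|x - x_0| - \rho)_+^\alpha$ combined with Theorem \ref{Comp.Prin} (the strict ordering of the right-hand sides is ensured by the extra $\varepsilon$) yields $u \leq u_\varepsilon$ in $B_R(x_0)$. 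Passing $\varepsilon \to 0$ and invoking stability together with the existence-uniqueness result of Theorem \ref{theo:sandwich}, I extract a subsequence converging locally uniformly to the unique solution $v$ of the unperturbed Dirichlet problem with the same boundary datum, so that $u \leq v$ in $B_R(x_0)$.

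Next, by uniqueness and the $\mathcal{O}(n)$ invariance (Lemma \ref{lem:rotation}), $v$ is radial about $x_0$ and is given explicitly by the formula \eqref{u_solution_2} with $(\Lambda, \nu) = (\tau, p)$, that is,
\begin{align*}
	v(x) = \tau \left[|x - x_0| - R + \left(\frac{\sup_{\partial B_R(x_0)} u}{\tau}\right)^{1/p}\right]_+^p.
\end{align*}
The verification that $v$ truly supersolves the full equation \eqref{main_theorem_2_1} uses exactly the radial analysis of Section \ref{sec: 4}: the choice $p = \min\{p_1, p_2\}$ is precisely what guarantees that the subdominant term (gradient or potential, whichever did not dictate the scaling) is of lower order along the profile $h(s) = \tau s^p$.

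Finally, the growth condition \eqref{main_theorem_2_2} provides some $0 < \Phi < 1$ such that $\sup_{\partial B_R(x_0)} u \leq \Phi \tau R^p$ for all $R$ large enough; plugging into the formula for $v$ and using the elementary inequality $(a - b + b^{1/p} \Phi^{1/p})_+^p \leq \tau(a - b(1 - \Phi^{1/p}))_+^p$ type bound, one obtains
\begin{align*}
	u(x) \leq \tau \left[|x - x_0| - R\bigl(1 - \Phi^{1/p}\bigr)\right]_+^p
\end{align*}
for every fixed $x$ and every sufficiently large $R$. Letting $R \to \infty$ forces $u \equiv 0$. The main obstacle is ensuring the stability and comparison machinery carries through uniformly across the three Hamiltonian forms in \eqref{Hf}, particularly the cases $\pm u^q |\nabla u|^m$ where the stability Lemma \ref{lem_Stab} as stated only covers $c|\nabla u|^m$; handling this requires the Lipschitz estimate of Lemma \ref{lipschitz1}, which yields a uniform modulus for the family $u_\varepsilon$ and permits a standard stability-of-viscosity-solutions argument with the gradient bound absorbing the $u^q$ factor locally.
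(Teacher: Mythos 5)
Your proposal is correct and follows essentially the same route as the paper: penalize with an additive $\varepsilon$, invoke the weighted comparison principle (Theorem \ref{Comp.Prin}) to get $u\le u_\varepsilon$, pass to the limit via stability and uniqueness to obtain $u\le v$ on $B_R(x_0)$ for the radial barrier $v$ from \eqref{u_solution_2} with $(\Lambda,\nu)=(\tau,p)$, then use the growth hypothesis to collapse the plateau radius and send $R\to\infty$. One small remark: you rightly flag that Lemma \ref{lem_Stab} is only stated for the Hamiltonian $c|\nabla u|^m$ and not for $\pm u^q|\nabla u|^m$, and you sketch a fix via the Lipschitz estimate of Lemma \ref{lipschitz1}; the paper simply cites Lemma \ref{lem_Stab} by analogy without addressing this, so your treatment is actually slightly more careful on that point, though it does not change the substance of the argument.
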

	\begin{proof}
		We consider the boundary value problem of \eqref{rad_2_main} after adding the solution to the penalized problem $u_\varepsilon$
		\begin{align}\label{problem2}
			\left\{
			\begin{array}{cccc}
				\Delta_\infty^\beta u_\varepsilon &= &cH(u_{\varepsilon},\nabla u_{\varepsilon}) + \lambda \left(|x - x_0| - \rho\right)^\alpha_+(u_\varepsilon^+)^\gamma + \varepsilon &\mbox{in } B_R(x_0),\\[5pt]
				u_\varepsilon &= &\displaystyle\sup_{\partial B_R(x_0)} u&\mbox{on } \partial B_R(x_0),
			\end{array}
			\right.
		\end{align}
		for each $\varepsilon > 0$. Next, let us consider $v: \overline{B_R(x_0)} \to \mathbb{R}$ and fix $R > \rho >0$, we consider the radial solution to the boundary value problem
		\begin{align*}
			\left\{
			\begin{array}{cccc}
				\Delta_\infty^\beta v &= &cH(v,\nabla v) + \lambda \left(|x - x_0| - \rho\right)^\alpha_+(v^+)^\gamma &\mbox{in } B_R(x_0),\\[5pt]
				v &= &\displaystyle\sup_{\partial B_R(x_0)} u &\mbox{on } \partial B_R(x_0).
			\end{array}
			\right.
		\end{align*}
		An analogous argument to the proof of Theorem \ref{theo: Liouville-type result I}, using key tools such as the comparison principle (Theorem \ref{Comp.Prin}), the uniqueness (Theorem \ref{theo:sandwich}), and the stability (Lemma \ref{lem_Stab}), also yields
		\begin{align}
			\label{proof_2_*}
			u \leq v \mbox{ in } B_R(x_0).
		\end{align}
		By hypothesis \eqref{main_theorem_2_2}, choose $R$ sufficiently large ($R\gg 1$) such that
		\begin{align}
			\label{proof_2_1}
			\displaystyle\sup_{\partial B_R(x_0)} \frac{u(x)}{R^{p}} \leq \Phi \tau,
		\end{align}
		for some $0<\Phi <1$. Thanks to \eqref{u_solution_2}, we obtain the function $v$ defined as
		\begin{align}
			\label{proof_2_2}
			v(x) =\tau \left[|x-x_0| - R + \displaystyle\left(\frac{\displaystyle\sup_{\partial B_R(x_0)} u}{\tau}\right)^{\frac{1}{p}}\right]^{p}_+,
		\end{align}
		is a supersolution to problem \eqref{problem2}.
		Combining the estimates \eqref{proof_2_*},\eqref{proof_2_1} and \eqref{proof_2_2}, we arrive at
		\begin{align}
			\label{proof_2_3}
			u(x) \leq \tau \left[|x-x_0| - R\left(1 - \Phi^{\frac{1}{p}}\right)\right]^{p}_+.
		\end{align}
		Letting $R \to \infty$ in \eqref{proof_2_3}, we conclude $u(x) \leq 0$. Note that by assumption, $u(x) \geq 0$, then $u\equiv 0$.
	\end{proof}
	\begin{corollary}
		We cannot replace the assumption to $u$ is a supersolution to problem \eqref{main_theorem_2_1}. A counterexample is the function $u(x)=c$ for $x\in\mathbb R^n$ and $c$ is a positive constant.
	\end{corollary}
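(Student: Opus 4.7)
The goal is to establish a counterexample showing that if one only assumes $u$ is a supersolution of \eqref{main_theorem_2_1}, the Liouville conclusion in Theorem \ref{theo: Liouville-type result II} fails. The plan is to take the candidate $u \equiv c$ with $c > 0$ suggested by the statement and verify two things: (i) it is a (classical, hence viscosity) supersolution of \eqref{main_theorem_2_1} under the structural hypotheses of Table \ref{Table_1}; and (ii) it satisfies the growth bound \eqref{main_theorem_2_2} while being nontrivial.

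For (i), I would note that $u \equiv c$ is smooth with $\nabla u \equiv 0$ and $D^2 u \equiv 0$. Each of the three admissible Hamiltonians listed in \eqref{Hf} has the form $|\nabla u|^m$, $\pm u^q |\nabla u|^m$, all of which vanish when $\nabla u = 0$ since $m > 0$. Consequently,
\begin{align*}
    \Delta_\infty^\beta u - c H(u,\nabla u) - \lambda \mathcal{H}_{\rho,x_0}(|x|)(u^+)^\gamma \;=\; -\lambda \mathcal{H}_{\rho,x_0}(|x|) c^\gamma \;\le\; 0,
\end{align*}
because $\lambda > 0$ and the weight $\mathcal{H}_{\rho,x_0}$ is nonnegative by assumption. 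In the degenerate case $\beta = 2$, Definition \ref{Vis_sol}(iii) requires checking $m(D^2 u) - cH - \lambda f \le 0$ at any point where a test function has vanishing gradient; since $D^2 u \equiv 0$ gives $m(D^2 u) = 0$, the same calculation applies. Hence $u \equiv c$ is a viscosity supersolution of \eqref{main_theorem_2_1}.

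For (ii), the exponent $p = \min\{p_1, p_2\}$ defined in \eqref{cond_2} is strictly positive under the conditions of Table \ref{Table_1} (inspection of \eqref{gradient_term_1}--\eqref{gradient_term_3} and \eqref{radial_sol_c=0} gives $p_1, p_2 > 1$), and $\tau > 0$ by the same formulas. Therefore
\begin{align*}
    \limsup_{|x|\to\infty} \frac{u(x)}{|x|^p} \;=\; \limsup_{|x|\to\infty} \frac{c}{|x|^p} \;=\; 0 \;<\; \tau,
\end{align*}
so \eqref{main_theorem_2_2} is satisfied, yet clearly $u \equiv c \not\equiv 0$. This contradicts the conclusion of Theorem \ref{theo: Liouville-type result II} and completes the counterexample.

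There is no real obstacle in this argument; the only point worth flagging is the structural reason why the proof of Theorem \ref{theo: Liouville-type result II} genuinely needs the full equation rather than just a supersolution inequality: the comparison principle (Theorem \ref{Comp.Prin}) is applied against an explicit radial barrier $v$ satisfying an \emph{equality}, and it is the subsolution side of $u$ (used via $u \le v$) that forces $u$ to inherit the radial decay. A mere supersolution cannot be compared from above with such a barrier, and the constant example above is the simplest obstruction to doing so.
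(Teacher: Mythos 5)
Your proposal is correct and takes essentially the same approach the paper uses (compare the explicitly-written-out Corollary following Theorem~\ref{theo: Liouville-type result I}, which handles the special case $H\equiv 0$); the paper simply states the counterexample without further verification. You supply the two checks the paper leaves implicit: that each admissible Hamiltonian in \eqref{Hf} depends on $\nabla u$ through a factor $|\nabla u|^m$ with $m>0$ and so vanishes on a constant, and that the degenerate $\beta=2$ branch of Definition~\ref{Vis_sol}(iii) is satisfied via $m(D^2\varphi(x_0))\le 0$ whenever $\varphi$ touches a constant from below. Both checks are correct, and the observation that $p>1>0$ forces the limsup in \eqref{main_theorem_2_2} to be $0<\tau$ completes the argument.
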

	\begin{corollary}
		The conclusion of Theorem \ref{theo: Liouville-type result II} still holds if $u$ is a subsolution to the problem \eqref{main_theorem_2_1}. Moreover, we can apply similar arguments as the proof of Theorem \ref{theo: Liouville-type result II} to prove this case.
	\end{corollary}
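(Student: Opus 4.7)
The plan is to transcribe the proof of Theorem \ref{theo: Liouville-type result II} almost verbatim, observing that every step there uses $u$ only in its subsolution role. I would proceed in four stages: (i) localize on a ball $B_R(x_0)$ via a penalized Dirichlet problem, (ii) dominate $u$ by the penalized solution $u_\varepsilon$ through the comparison principle, (iii) pass $\varepsilon \to 0^+$ by stability to recover the radial barrier constructed in Section \ref{sec: 4}, and (iv) send $R \to \infty$ using the growth hypothesis to collapse that barrier to zero.

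For the first two stages I would fix $R > \rho$ and, for each $\varepsilon > 0$, introduce the penalized Dirichlet problem on $B_R(x_0)$
\[
\left\{
\begin{array}{lclc}
\Delta_\infty^\beta u_\varepsilon &=& cH(u_\varepsilon, \nabla u_\varepsilon) + \lambda(|x - x_0| - \rho)_+^\alpha (u_\varepsilon^+)^\gamma + \varepsilon &\text{in } B_R(x_0), \\
u_\varepsilon &=& \displaystyle\sup_{\partial B_R(x_0)} u &\text{on } \partial B_R(x_0),
\end{array}
\right.
\]
whose unique solution exists by Theorem \ref{theo:sandwich} once one sandwiches between the zero subsolution and the radial supersolution furnished by Section \ref{sec: 4}. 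Applying the comparison principle (Theorem \ref{Comp.Prin}) then yields $u \leq u_\varepsilon$ in $B_R(x_0)$: the strict $+\varepsilon$ gap together with the structural bound $\mathcal{H}_{\rho,x_0}(|x|) \lesssim (|x - x_0| - \rho)_+^\alpha$ furnishes the required strict ordering of the source terms, while the boundary inequality $u_\varepsilon \geq u$ on $\partial B_R(x_0)$ is automatic. This is the \emph{only} step where the PDE satisfied by $u$ is invoked, and it uses only the subsolution inequality, so the passage from the ``solution'' hypothesis of Theorem \ref{theo: Liouville-type result II} to the ``subsolution'' hypothesis of the present corollary is transparent.

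I would then let $\varepsilon \to 0^+$ and invoke the stability Lemma \ref{lem_Stab} to extract a subsequence $u_{\varepsilon_j}$ converging locally uniformly on $\overline{B_R(x_0)}$ to the radial viscosity solution $v$ of the unpenalized problem; by Section \ref{sec: 4}, $v$ admits the explicit dead-core form
\[
v(x) = \tau \left[|x - x_0| - R + \left(\frac{\displaystyle\sup_{\partial B_R(x_0)} u}{\tau}\right)^{1/p}\right]_+^p,
\]
with $p, \tau$ as in \eqref{cond_2}. Passing to the limit in $u \leq u_\varepsilon$ gives $u \leq v$ in $B_R(x_0)$. Finally, the growth hypothesis \eqref{main_theorem_2_2} allows me to choose $R$ so large that $\sup_{\partial B_R(x_0)} u \leq \Phi\, \tau R^p$ for some $\Phi \in (0, 1)$, which collapses the barrier to
\[
u(x) \leq \tau \left[|x - x_0| - R\left(1 - \Phi^{1/p}\right)\right]_+^p;
\]
letting $R \to \infty$ with $x$ fixed drives the bracket to zero, and together with $u \geq 0$ this forces $u \equiv 0$. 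The main obstacle is essentially bookkeeping: I must check that Lemma \ref{lipschitz1} and Theorem \ref{Comp.Prin} both remain applicable when $u$ is only a subsolution---both do, since the comparison principle is stated directly for a subsolution/supersolution pair, and the Lipschitz estimate only requires the one-sided inequality $\Delta_\infty^\beta u \geq -c_1|\nabla u|^m - c_2 f(|x|, u)$, which follows trivially from $u$ being a subsolution of the original PDE.
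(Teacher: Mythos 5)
Your proposal is correct and takes essentially the same approach the paper has in mind: the paper itself gives no separate proof for this corollary, stating only that ``similar arguments'' to Theorem \ref{theo: Liouville-type result II} apply, and your careful transcription correctly identifies that the only place the PDE for $u$ enters is the one-sided comparison $u \le u_\varepsilon$ via Theorem \ref{Comp.Prin}, which requires $u$ only as a subsolution. The bookkeeping check at the end, confirming that Lemma \ref{lipschitz1} needs only the subsolution inequality, closes the argument.
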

	\begin{theorem}[\bf Liouville-type result III]
		\label{theo: Liouville-type result - III}
		Let $\rho,\lambda > 0$ and $\alpha\in\left[0,\frac{(4-\beta-m)(3-\beta-\gamma)}{2(3-\beta-m)}\right)$. Assume that $u$ is a nonnegative entire viscosity solution to
		\begin{equation}\label{E}
			\Delta_{\infty}^{\beta} u
			= cH(u,\nabla u) + \lambda\phi_{\rho,x_0}(|x|)\,u^{\gamma}, 
			\qquad x\in\R^n.
		\end{equation}
		where the weight $\phi_{\rho,x_0}(|x|) \lesssim [1+(|x-x_0|-\rho)^2]^{-\alpha}$ for some $x_0\in\mathbb R^n$, and the conditions on the parameters $m, q, \gamma$ are given in Table \ref{Table_1}. If the following condition holds
		\begin{align}
			\label{final_cond_1}
			\displaystyle\limsup_{|x|\to\infty} \frac{u(x)}{|x|^{p^*}(1+|x|^2)^{-\frac{\alpha}{3-\beta-\gamma}}}  < \tau^{*},
		\end{align}
		where $p^*,\tau^{*}$ are respectively defined in \eqref{cond_3}, then every nonnegative viscosity solution of \eqref{E} is trivial, that is $u\equiv0$ in $\mathbb{R}^n$.
	\end{theorem}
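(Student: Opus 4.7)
The plan is to adapt the two--step argument employed in the proof of Theorem \ref{theo: Liouville-type result II} (penalization $+$ comparison against a radial dead--core barrier), this time invoking the form (II) radial profiles of Section \ref{sec: 4} in place of the form (I) ones. The key observation is that the growth bound \eqref{final_cond_1} is exactly tailored to the scaling of the Lane--Emden--Matukuma barrier $\chi_{\{p_2^*>p_1^*\}}\tau^* r^{p^*}$.

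First, fix $R > \rho > 0$ with $x_0 \in B_R(x_0)$. For each $\varepsilon > 0$, I would let $u_\varepsilon$ solve the penalized Dirichlet problem
\begin{equation*}
\Delta_\infty^\beta u_\varepsilon = cH(u_\varepsilon,\nabla u_\varepsilon) + \lambda\bigl[1+(|x-x_0|-\rho)^2\bigr]^{-\alpha}(u_\varepsilon^+)^\gamma + \varepsilon \quad \text{in } B_R(x_0),
\end{equation*}
with boundary datum $u_\varepsilon = \sup_{\partial B_R(x_0)}u$. The pointwise bound $\phi_{\rho,x_0}(|x|)\lesssim [1+(|x-x_0|-\rho)^2]^{-\alpha}$ together with the comparison principle (Theorem \ref{Comp.Prin}) gives $u\le u_\varepsilon$ in $B_R(x_0)$. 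Passing $\varepsilon\to 0^+$ through a subsequence via Lemma \ref{lem_Stab} (stability) and the uniqueness part of Theorem \ref{theo:sandwich}, I obtain a radial solution $v_R$ of the unpenalized problem, still satisfying $u\le v_R$.

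Next, the form (II) analysis of Section \ref{sec: 4} supplies the explicit barrier
\begin{equation*}
v_R(x) = \chi_{\{p_2^*>p_1^*\}}\tau^*\Bigl[|x-x_0|-R+\bigl(d_R/(\chi_{\{p_2^*>p_1^*\}}\tau^*)\bigr)^{1/p^*}\Bigr]^{p^*}_+,
\end{equation*}
where $d_R:=\sup_{\partial B_R(x_0)}u$ and the dead--core thickness $T_R$ is determined by the relation \eqref{1'}. The hypothesis \eqref{final_cond_1} furnishes $\Phi\in(0,1)$ and $R_0$ large such that, for $R\ge R_0$,
\begin{equation*}
d_R \;\le\; \Phi\,\tau^*\,R^{p^*}(1+R^2)^{-\alpha/(3-\beta-\gamma)}.
\end{equation*}
In the gradient--dominated regime $p_2^*>p_1^*$ one has $\chi_{\{p_2^*>p_1^*\}}=1$, and the bound immediately yields $T_R\le\Phi^{1/p^*}R$. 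In the potential--dominated regime $p_1^*>p_2^*$, the factor $(1+T_R^2)^{-\alpha/(3-\beta-\gamma)}$ carried by the barrier is structurally the same as the factor $(1+R^2)^{-\alpha/(3-\beta-\gamma)}$ in \eqref{final_cond_1}, and combining \eqref{1'} with the bound on $d_R$ again produces $T_R\le\Phi^{1/p^*}R$. In either case, $\rho_R\ge R(1-\Phi^{1/p^*})$ and therefore, for every fixed $x\in\R^n$,
\begin{equation*}
0\le u(x) \le v_R(x) \le \chi_{\{p_2^*>p_1^*\}}\tau^*\bigl[|x-x_0|-R(1-\Phi^{1/p^*})\bigr]^{p^*}_+ \xrightarrow[R\to\infty]{} 0,
\end{equation*}
forcing $u\equiv 0$.

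The main obstacle I foresee is the self--consistent, implicit determination of $T_R$ in the potential--dominated case $p_1^*>p_2^*$, where $\chi_{\{p_2^*>p_1^*\}}$ itself depends on $T_R$ through $(1+T_R^2)^{-\alpha/(3-\beta-\gamma)}$ and $T_R$ solves the transcendental equation \eqref{1'}. The comparability $T_R\asymp R$ has to be extracted from \eqref{1'} by checking the monotonicity of the auxiliary map $s\mapsto s^{p^*}(1+s^2)^{-\alpha/(3-\beta-\gamma)}$ on the relevant range; the admissible range $\alpha\in\bigl[0,(4-\beta-m)(3-\beta-\gamma)/(2(3-\beta-m))\bigr)$ in Table \ref{Table_1} is precisely what keeps this map increasing and ensures that the lower--order check (gradient vs.\ potential) performed in Section \ref{sec: 4} remains valid up to the limit $R\to\infty$, so no further compatibility condition is required.
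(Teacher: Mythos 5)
Your proposal follows essentially the same route as the paper: compare the solution against the form~(II) radial dead--core barrier on $B_R(x_0)$, use \eqref{final_cond_1} to bound the boundary datum $d_R=\sup_{\partial B_R(x_0)}u$ so that the plateau radius $\rho_R$ grows like a definite fraction of $R$, and send $R\to\infty$. The one substantive deviation is the penalization/stability step: the paper's own proof of Theorem~\ref{theo: Liouville-type result - III} skips $u_\varepsilon$ entirely and invokes the comparison principle directly, noting that $t\mapsto(1+x^2)^{-\alpha}t^\gamma$ is strictly increasing in $t$ (which substitutes for the strict gap $h_1>h_2$ in Theorem~\ref{Comp.Prin}); your penalize--then--pass--to--the--limit detour is harmless but unnecessary here, and it is an exact copy of the Theorem~\ref{theo: Liouville-type result II} machinery.

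There is, however, a small flaw in your last paragraph. You claim the range $\alpha\in\bigl[0,\tfrac{(4-\beta-m)(3-\beta-\gamma)}{2(3-\beta-m)}\bigr)$ is ``precisely what keeps the map $s\mapsto s^{p^*}(1+s^2)^{-\alpha/(3-\beta-\gamma)}$ increasing.'' This is not true in the potential--dominated regime $p_1^*>p_2^*$, where $p^*=p_2^*$: eventual monotonicity there requires $\alpha<p_2^*(3-\beta-\gamma)/2$, which is strictly smaller than the Table~\ref{Table_1} bound $p_1^*(3-\beta-\gamma)/2$ (the latter is the threshold attached to $p_1^*$, and in fact it is used only in the Remark following the theorem, to explain when the denominator in \eqref{final_cond_1} diverges). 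Fortunately, monotonicity of that auxiliary map is not needed to extract $T_R\le \Phi^{1/p^*}R$. What actually closes the gap is simpler: the dead--core compatibility $T_R<R$ combined with $\alpha\ge 0$ gives $(1+T_R^2)^{-\alpha/(3-\beta-\gamma)}\ge (1+R^2)^{-\alpha/(3-\beta-\gamma)}$, so from $h(T_R)=d_R$ and \eqref{final_proof_1_2} one reads off directly
\begin{equation*}
\tau^*\,T_R^{p^*}\,(1+R^2)^{-\frac{\alpha}{3-\beta-\gamma}} \;\le\; \tau^*\,T_R^{p^*}\,(1+T_R^2)^{-\frac{\alpha}{3-\beta-\gamma}} \;=\; d_R \;\le\; \Phi\,\tau^*\,R^{p^*}\,(1+R^2)^{-\frac{\alpha}{3-\beta-\gamma}},
\end{equation*}
hence $T_R\le\Phi^{1/p^*}R$ with no monotonicity hypothesis. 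Replace your monotonicity argument by this one--line inequality and the proposal is fully correct.
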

	\begin{proof}
		Fixing $R > \rho >0$, let $v: \overline{B_R(x_0) }\to \mathbb{R}$, we consider the radial solution to the boundary value problem
		\begin{align*}
			\left\{
			\begin{array}{cccc}
				\Delta_\infty^\beta v &=& cH(v,\nabla v) + \lambda\left[1 + (|x-x_0|-\rho)^2\right]^{-\alpha}v^{\gamma} &\mbox{in } B_R(x_0),\\[5pt]
				v &= &\displaystyle\sup_{\partial B_R(x_0)} u &\mbox{on } \partial B_R(x_0).
			\end{array}
			\right.
		\end{align*}
		By comparison principle, Theorem \ref{Comp.Prin}, $u \leq v$ in $B_R(x_0)$ (note that the same proof works since the function $f(x,t)=(1+x^2)^{-\alpha}t^{\gamma}$ is strictly increasing in the second variable). It follows by hypothesis \eqref{final_cond_1} that, taking $R$ sufficiently large ($R\gg 1$) such that
		\begin{align}\label{final_proof_1_2}
			\displaystyle\sup_{\partial B_R(x_0)} \frac{u(x)}{R^{p^*}(1+R^2)^{-\frac{\alpha}{3-\beta-\gamma}}} &\leq \Phi\tau^{*},
		\end{align}
		for some $0<\Phi <1$. Thanks to \eqref{u_solution_2}, we obtain the supersolution $v$ as follows
		\begin{align}
			\label{final_proof_1_3}
			v(x) =\chi_{\{p_2^*>p_1^*\}} \tau^{*} \left[|x-x_0| - R + \left(\frac{\displaystyle\sup_{\partial B_R(x_0)} u}{\chi_{\{p_2^*>p_1^*\}}\tau^{*}}\right)^{1/p^*}\right]^{p^*}_+,
		\end{align}
		where $\chi_{\{p_2^*>p_1^*\}}$ is defined in \eqref{5'}. Finally, combining \eqref{final_proof_1_2} and \eqref{final_proof_1_3}, we deduce 
		\begin{align}
			\label{final_proof_2_3}
			u(x) \leq \tau^{*} \left[|x-x_0| - R\left(1 - \Phi^{1/p^*}\right)\right]^{p^*}_+.
		\end{align}
		Letting $R \to \infty$ in \eqref{final_proof_2_3}, we conclude the proof of Theorem \ref{theo: Liouville-type result - III}.
	\end{proof}
\begin{remark}
	Let us explain the domain of the parameter $\alpha$ in Theorem \ref{theo: Liouville-type result - III}. The mentioned condition of $\alpha$ ensures that the function $h(x)=|x|^{p^*}(1+|x|^2)^{-\frac{\alpha}{3-\beta-\gamma}},x\in\mathbb R^n$ converges to $+\infty$ when $|x|\to+\infty$. It allows the growth condition \eqref{final_cond_1} to cover a broad class of viscosity solutions, including unbounded solutions.
\end{remark}
Finally, we establish a Liouville result for nonlinearity $|u|^{\gamma}e^u$. Our proof of this theorem is based on the doubling method from \cite{AAE2025}, along with a suitable adjustment of the coupling function. Furthermore, in order to state our result in a general sense, we allow for a functional coefficient $a(x)$ for the gradient term. As a result, our theorem deals with a more general framework compared to \cite[Theorem 2.6]{AH2020}.
\begin{theorem}[\bf Liouville-type result IV]
	\label{cxmodel}
	Suppose that $u\in C(\mathbb R^n)$ is a viscosity solution to
	\begin{align*}
		\Delta_{\infty}^{\beta}u=a(x)(|x|+1)^{\alpha}|\nabla u|^m+\lambda|u|^{\gamma}e^u\mbox{ in }\mathbb R^n,
	\end{align*}
	where $m\in (0,2-\beta]$, $a:\mathbb R^n\to (0,\infty)$ is a function satisfying $\inf_{\mathbb R^n}a>0$, $\lambda,\gamma\geq 0$, and $\alpha\in (-1,0]$.
	If the following condition holds
	\begin{align}\label{e}
		\lim_{R\to\infty}{\dfrac{\mathrm{osc}_{B_R}u}{R}}=0,
	\end{align}
	where $\mathrm{osc}_{B_R}u=\sup_{B_R}u-\inf_{B_R}u$, for each $R>0$, then $u$ is necessarily constant.
\end{theorem}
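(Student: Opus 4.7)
The plan is to argue by contradiction using the doubling method, adapted to the fractional infinity Laplacian framework from \cite{AAE2025} and combined with the sublinear oscillation hypothesis \eqref{e}. Suppose, contrary to the claim, that $u$ is not constant. Then there exist $x_1,y_1\in\mathbb{R}^n$ with $\theta_0 := u(x_1)-u(y_1) > 0$. The goal is to show that this positive discrepancy cannot be sustained once we compare viscosity jets at a doubled maximizer against the absorption provided by $\lambda|u|^\gamma e^u$.

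First, introduce the penalized coupling
\[
\Phi_{L,\delta}(x,y) := u(x) - u(y) - L\psi(|x-y|) - \delta\bigl(\langle x \rangle + \langle y \rangle\bigr),
\qquad \langle z\rangle := \sqrt{1+|z|^2},
\]
where $\psi\in C^2([0,\infty))$ is strictly increasing, strictly concave, with $\psi(0)=0$, $\psi'(0^+)=+\infty$, and linear growth at infinity (for instance $\psi(r)=\sqrt{r}$ truncated to be linear for $r\ge 1$, or the standard choice $\psi(r)=r^\theta$ on a bounded range glued to $r$, tailored so that $L\psi$ dominates the required gradient exponent $m\le 2-\beta$). The parameter $L>0$ will be chosen small enough that $\Phi_{L,\delta}(x_1,y_1) \ge \theta_0/2>0$ for all small $\delta$. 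The sublinear oscillation condition \eqref{e} guarantees that $\Phi_{L,\delta}(x,y)\to -\infty$ as $|x|+|y|\to\infty$ once the confining term $\delta\langle x\rangle + \delta \langle y\rangle$ is present, so a global maximizer $(\bar x_\delta,\bar y_\delta)$ exists. A standard argument shows $|\bar x_\delta - \bar y_\delta|$ stays bounded away from $0$ and from $\infty$ uniformly in $\delta\in(0,\delta_0)$ (otherwise either the $L\psi$ penalty blows up the supremum, contradicting $\sup\Phi_{L,\delta}\ge\theta_0/2$, or \eqref{e} kills the excess).

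Next, apply the Crandall--Ishii--Lions lemma to produce, for each small $\delta$, a vector $p_\delta := L\psi'(|\bar x_\delta-\bar y_\delta|)\tfrac{\bar x_\delta-\bar y_\delta}{|\bar x_\delta-\bar y_\delta|}$ (plus $\mathcal O(\delta)$ corrections) and symmetric matrices $X_\delta,Y_\delta$ with $X_\delta\le Y_\delta$ such that $(p_\delta+\mathcal O(\delta),X_\delta)\in\overline J^{2,+}u(\bar x_\delta)$ and $(p_\delta-\mathcal O(\delta),Y_\delta)\in\overline J^{2,-}u(\bar y_\delta)$. Because $p_\delta\ne 0$, the viscosity inequalities for $u$ as simultaneous sub- and supersolution yield, after subtraction,
\begin{align*}
0 &\ge |p_\delta|^{-\beta}\bigl(\langle X_\delta p_\delta,p_\delta\rangle - \langle Y_\delta p_\delta,p_\delta\rangle\bigr) \\
  &\qquad - \bigl[a(\bar x_\delta)(|\bar x_\delta|+1)^\alpha - a(\bar y_\delta)(|\bar y_\delta|+1)^\alpha\bigr]|p_\delta|^m \\
  &\qquad - \lambda\bigl[|u(\bar x_\delta)|^\gamma e^{u(\bar x_\delta)} - |u(\bar y_\delta)|^\gamma e^{u(\bar y_\delta)}\bigr] + \text{error}(\delta).
\end{align*}
The first bracket is $\le 0$ by $X_\delta\le Y_\delta$; the weighted gradient bracket is controlled because $\alpha\le 0$ makes $(|x|+1)^\alpha$ bounded, and by continuity of $a$ on any compact set. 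The decisive term is the exponential absorption: at the maximizer one has $u(\bar x_\delta)-u(\bar y_\delta)\ge L\psi(|\bar x_\delta-\bar y_\delta|) \ge c_L > 0$, hence
\[
|u(\bar x_\delta)|^\gamma e^{u(\bar x_\delta)} - |u(\bar y_\delta)|^\gamma e^{u(\bar y_\delta)} \ge \kappa(u(\bar y_\delta))\cdot(e^{c_L}-1),
\]
for a strictly positive $\kappa$ (treating separately the cases $u(\bar y_\delta)\ge 0$ where the lower bound is immediate from monotonicity of $t\mapsto t^\gamma e^t$, and $u(\bar y_\delta)<0$ where $|u|^\gamma e^u$ is bounded and the monotonicity is recovered for $|\bar y_\delta|$ sufficiently large via \eqref{e}). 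Sending $\delta\to 0$ (after extracting a subsequence with $\bar x_\delta\to\bar x_0$, $\bar y_\delta\to\bar y_0$) turns the inequality above into a strict quantitative contradiction, forcing $\theta_0=0$, i.e.\ $u$ is constant.

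The main obstacle lies in step three: choosing $\psi$ so that simultaneously (i) the compactness of maximizers is guaranteed despite only sublinear oscillation, (ii) the lower-order term $a(x)(|x|+1)^\alpha|\nabla u|^m$ is genuinely subordinate to the $\Delta_\infty^\beta$ contribution at the doubled point (which is why the sharp bound $m\le 2-\beta$ matters, matching the formal scaling $\Delta_\infty^\beta\sim|\nabla u|^{2-\beta}|D^2u|$), and (iii) the exponential absorption retains a definite positive sign uniformly in $\delta$. Handling case $\gamma>0$ together with possibly negative values of $u$ requires an extra sign analysis, which is where the structural hypothesis $\inf_{\mathbb{R}^n}a>0$ and $\alpha>-1$ are used to keep the weighted gradient term under control along a rescaled doubling sequence; this is the delicate step where the adaptation of the argument from \cite{AAE2025} is most substantive.
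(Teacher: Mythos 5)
There is a genuine gap in your proposal, and it lies exactly where you pin your hopes: on extracting a positive lower bound from the exponential absorption bracket. After applying the Ishii--Jensen lemma at the doubled maximizer and subtracting the supersolution inequality at $\bar y_\delta$ from the subsolution inequality at $\bar x_\delta$, the structure you obtain is
\[
\underbrace{|p_\delta|^{-\beta}\bigl(\langle X_\delta p_\delta,p_\delta\rangle - \langle Y_\delta p_\delta,p_\delta\rangle\bigr)}_{D\,\le\,0} \;\ge\; \underbrace{\bigl[a(\bar x_\delta)(|\bar x_\delta|+1)^\alpha - a(\bar y_\delta)(|\bar y_\delta|+1)^\alpha\bigr]|p_\delta|^m}_{G} \;+\; \underbrace{\lambda\bigl[|u(\bar x_\delta)|^\gamma e^{u(\bar x_\delta)} - |u(\bar y_\delta)|^\gamma e^{u(\bar y_\delta)}\bigr]}_{E}.
\]
That is, $G+E\le 0$. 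If $E>0$, this merely forces $G<0$, which is entirely possible (for instance when $|\bar x_\delta|>|\bar y_\delta|$ and $\alpha<0$), and no contradiction follows. You never control $G$, so the ``strict quantitative contradiction'' you announce is not there. Moreover, the positivity of $E$ itself is false in general: $t\mapsto|t|^\gamma e^t$ is not monotone through $t=0$ for $\gamma>0$ (take $\gamma=2$, $u(\bar y_\delta)=-0.1$, $u(\bar x_\delta)=0.01$: then $|u(\bar x_\delta)|^\gamma e^{u(\bar x_\delta)}<|u(\bar y_\delta)|^\gamma e^{u(\bar y_\delta)}$), and your proposed repair ``monotonicity is recovered for $|\bar y_\delta|$ large via \eqref{e}'' has no basis --- \eqref{e} constrains oscillation, not the sign of $u$.

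The paper's proof is different in both structure and in the source of the contradiction, and the exponential term is in fact \emph{never used}: being nonnegative, it is simply dropped. The doubling function is radius-dependent,
\[
\Phi_R(x,y) = u(x)-u(y) - \kappa_R\omega_R(|x-y|) - 2\kappa_R(|x|^2+|y|^2),\qquad \kappa_R=\frac{\mathrm{osc}_{B_R}u}{R^2},\ \ \omega_R(t)=-t+\vartheta t^2,
\]
so both the coupling and the confining parameters scale like $\kappa_R\to 0$; this is what lets the argument close under merely sublinear oscillation. If $\max\Phi_R\le0$ for all large $R$, one sends $R\to\infty$ and reads off $u(x)=u(y)$. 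Otherwise, along a sequence $R_i\to\infty$ with $\max\Phi_{R_i}>0$, the maximizer $(x_i,y_i)$ is interior with $x_i\neq y_i$, and only the \emph{subsolution} inequality at $x_i$ is invoked: dropping $\lambda|u|^\gamma e^u\ge0$ gives $|p_i|^{-\beta}\langle X_ip_i,p_i\rangle\ge a(x_i)(|x_i|+1)^\alpha|p_i|^m$. Combining with the matrix estimate $\langle X_ip_i,p_i\rangle\le C\kappa_i|p_i|^2$ and $|p_i|\le 5L_i$ yields $\inf_{\mathbb R^n}a\le a(x_i)\le C\,L_i^{3-\beta-m}\,R_i^{-\alpha-1}\,(1+o(1))$. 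Since $m\le 2-\beta$ gives $3-\beta-m\ge1$, $L_i\to0$, and $\alpha>-1$ gives $R_i^{-\alpha-1}\to0$, the right side tends to zero, contradicting $\inf a>0$. So the hypotheses $\inf a>0$, $\alpha>-1$, $m\le2-\beta$ enter through the gradient term and the $R$-dependent doubling scales --- not through any property of $e^u$. Your proposal does not implement this scaling and instead tries to make the exponential term do work it cannot do.
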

\begin{proof}
	Let $L_R=\frac{\mathrm{osc}_{B_R}u}{R}$ and $\kappa_R=\frac{L_R}{R}$, for every $R>0$. We consider the function
	\begin{align*}
		\Phi_R(x,y)=u(x)-u(y)-\kappa_R\omega_R(|x-y|)-2\kappa_R(|x|^2+|y|^2),
	\end{align*}
	where $\omega_R(t)=-t+\vartheta t^2$. Let us choose $\vartheta=\vartheta(R)>0$ small enough so that
	\begin{align*}
		-1\le \omega_R'(t)\le -\dfrac{1}{2}\text{ and } \omega_R''(t)\geq 0,\text{ for }t\in [0,R].
	\end{align*}
	Since $\Phi_{R}$ is continuous in $\overline{B_R}\times\overline{B_R}$, $\Phi_R$ attains its maximum value on this domain. We consider two separate cases in this proof.\medskip
	
	\textbf{Case 1}: There exists $R_0>0$ such that $\max\limits_{\overline{B_R}\times\overline{B_R}}{\Phi_R}\le 0$, for every $R>R_0$.
	
	Fixing two points $x,y\in\mathbb R^n$, we are able to find $R_1>R_0$ such that $x,y\in B_{R/2}$, for $R>R_1$. Thus, we derive the following estimates
	\begin{align}\label{1}
		|u(x)-u(y)|&\le \kappa_R\omega_R(|x-y|)+2\kappa_R(|x|^2+|y|^2)\notag\\
		&\le \kappa_R|x-y|^2+2\kappa_R(|x|^2+|y|^2), \text{ for all }x,y\in B_{\frac{R}{2}}\text{ and }R>R_1.  
	\end{align}
	Let $R\to \infty$ in \eqref{1}, noting that $\kappa_R\to 0$ by assumption \eqref{e}, we find that $u(x)=u(y)$ for all $x,y\in B_{R/2}$ and $R>R_1$. Since $x,y$ are taken arbitrarily, we conclude $u$ is constant.\medskip

	\textbf{Case 2}: There exists a sequence $R_i\to\infty$ such that $M_i=\max\limits_{\overline{B_{R_i}}\times\overline{B_{R_i}}}{\Phi_{R_i}}> 0$, for every $i\in\mathbb N$. 
	
	We remark that $L_{R_i}>0$, for every $i\in \mathbb N$. Indeed, if there is a number $j\in\mathbb N$ so that $L_{R_j}=0$, that means $\mathrm{osc}_{B_{R_j}}u=0$ and $\kappa_{R_j}=0$. Combining it with the estimate
	\begin{align*}
		\Phi_{R_j}(x,y)=u(x)-u(y)-\kappa_{R_j}\omega_{R_j}(|x-y|)-2\kappa_{R_j}(|x|^2+|y|^2)= u(x)-u(y)\le \mathrm{osc}_{B_{R_j}}{u},~ x,y\in \overline{B_{R_j}},
	\end{align*}
	we deduce that $M_j\le 0$, which is a contradiction. 
	
	For simplicity, throughout the rest of this proof, let us use the notation 
	\begin{align*}
		L_i:=L_{R_i}, \kappa_i:=\kappa_{R_i}, \Phi_i:=\Phi_{R_i},\omega_{R_i}:=\omega_i,~i\in\mathbb N.
	\end{align*}
	Assume that the function $\Phi_i$ attains its maximum value on $\overline{B_{R_i}}\times \overline{B_{R_i}}$ at $(x_i,y_i)$. Since $\Phi_i(x_i,y_i)>0$ then
	\begin{align}\label{3.1}
		\kappa_i\omega_i(|x_i-y_i|)+2\kappa_i(|x_i|^2+|y_i|^2)<u(x_i)-u(y_i).
	\end{align}
	It follows from \eqref{3.1} that
	\begin{align*}
		2\kappa_i|x_i|^2<u(x_i)-u(y_i)+\kappa_i|x_i-y_i|\le 2\mathrm{osc}_{B_{R_i}}u,
	\end{align*}
	which leads to $|x_i|<R_i$. With similar arguments, we also obtain $|y_i|<R_i$. This reveals that $(x_i,y_i)$ is a local maximum of $\Phi_i$ over $\overline{B_{R_i}}\times\overline{B_{R_i}}$.  Additionally, thanks to \eqref{3.1}, we obtain
	\begin{align*}
		u(x_i)-u(y_i)>2\kappa_i(|x_i|^2+|y_i|^2)-\kappa_i|x_i-y_i|,
	\end{align*}
	thus we must have $x_i\neq y_i$.
	
	Then applying \cite[Theorem 3.2]{CIL1992}, there exists $X_i,Y_i\in \mathbb S^{n\times n}$ such that $(p_i,X_i)\in \overline{J}^{2,+}_{B_{R_i}}(u(x_i)),\\(q_i,Y_i)\in \overline{J}^{2,-}_{B_{R_i}}(u(y_i))$ and
	\begin{align}\label{3}
		\left(\begin{matrix}
			X_i&0\\
			0&-Y_i
		\end{matrix}\right)\le A+\varepsilon A^2,\text{ for each }\varepsilon>0,
	\end{align}
	where $A=D^2\Theta_i(x_i,y_i)$ and $\Theta_i(x,y)=\kappa_i\omega_i(|x-y|)+2\kappa_i(|x|^2+|y|^2)$. A direct calculation shows that
	\begin{align}\label{4}
		A=\left(\begin{matrix}
			Z&-Z\\
			-Z& Z
		\end{matrix}\right)+4\kappa_iI_{2n\times 2n},
	\end{align}
	and
	\begin{align*}
		Z=\kappa_i\omega_i''(|x_i-y_i|)\dfrac{(x_i-y_i)(x_i-y_i)^T}{|x_i-y_i|^2}+\kappa_i\dfrac{\omega_i'(|x_i-y_i|)}{|x_i-y_i|}\left(I-\dfrac{(x_i-y_i)(x_i-y_i)^T}{|x_i-y_i|^2}\right).
	\end{align*}
	From \eqref{3} and \eqref{4}, we arrive at
	\begin{align*}
		\left(\begin{matrix}
			p\\
			q
		\end{matrix}\right)^T\left(\begin{matrix}
			X_i&0\\
			0&-Y_i
		\end{matrix}\right)\left(\begin{matrix}
			p\\
			q
		\end{matrix}\right)&\le \left(\begin{matrix}
			p\\
			q
		\end{matrix}\right)^TA
		\left(\begin{matrix}
			p\\
			q
		\end{matrix}\right)+\varepsilon\left(\begin{matrix}
			p\\
			q
		\end{matrix}\right)^TA^2
		\left(\begin{matrix}
			p\\
			q
		\end{matrix}\right).
	\end{align*}
	Choosing $(p,q)=(p_i,0)$ and $\varepsilon>0$ small enough, we deduce that
	\begin{align}\label{5}
		\langle X_ip_i,p_i\rangle \le C(\langle Zp_i,p_i\rangle+\langle \kappa_ip_i,p_i\rangle),\text{ for some }C>0.
	\end{align}
	Moreover, using $\omega'<0$ and $\omega''\geq 0$, we derive the following estimates
	\begin{align}\label{6}
		\langle Zp_i,p_i\rangle &=\kappa_i\omega_i''(|x_i-y_i|)\left\langle\dfrac{(x_i-y_i)(x_i-y_i)^T}{|x_i-y_i|^2}p_i,p_i \right\rangle\notag\\
		&\quad +\kappa_i\dfrac{\omega_i'(|x_i-y_i|)}{|x_i-y_i|}\left\langle \left(I-\dfrac{(x_i-y_i)(x_i-y_i)^T}{|x_i-y_i|^2}\right)p_i,p_i\right\rangle\notag\\
		&\le \kappa_i\omega_i''(|x_i-y_i|)|p_i|^2\notag\\
		&\le C\kappa_i|p_i|^2.
	\end{align}
	From \eqref{5} and \eqref{6}, we obtain 
	\begin{align}\label{6.1}
		\langle X_ip_i,p_i\rangle \le C\kappa_i|p_i|^2.
	\end{align}
	Applying the definition of viscosity subsolution, we obtain
	\begin{align}\label{6.2}
		|p_i|^{-\beta}\langle X_ip_i,p_i\rangle\geq a(x_i)(|x_i|+1)^{\alpha}|p_i|^m.
	\end{align}
Thanks to the choice of the function $\omega_i$, we arrive at
	\begin{align*}
		|p_i|=\left|\kappa_i\omega_i'(|x_i-y_i|)\dfrac{x_i-y_i}{|x_i-y_i|}+4\kappa_ix_i\right|&\le \kappa_i|\omega_i'(|x_i-y_i|)|+4\kappa_i|x_i|\\
		&\le \kappa_i+4\kappa_iR_i\\
		&\le 5L_i.
	\end{align*}
	Combining estimates \eqref{6.1} and \eqref{6.2} gives
	\begin{align}\label{9}
		a(x_i)(|x_i|+1)^{\alpha}&\le C\kappa_i|p_i|^{2-\beta-m}.
	\end{align}
	Note that $\kappa_i=\dfrac{L_i}{R_i}$, we derive from \eqref{9} that
	\begin{align}\label{10}
		a(x_i)(|x_i|+1)^{\alpha}&\le C\dfrac{L_i^{3-\beta-m}}{R_i}.
	\end{align}
	From \eqref{10}, it follows that
	\begin{align*}
		\inf\limits_{\mathbb R^n}{a}\le a(x_i)&\le C\dfrac{L_i^{3-\beta-m}}{R_i}(|x_i|+1)^{-\alpha}\\
		&\le C\dfrac{L_i^{3-\beta-m}}{R_i}(R_i+1)^{-\alpha}\\
		&\le C {L_i^{3-\beta-m}}\dfrac{(R_i+1)^{-\alpha}}{R_i^{-\alpha}} R_i^{-\alpha-1}.
	\end{align*}
	Let $i\to\infty$, we obtain $\inf\limits_{\mathbb R^n}{a}\le 0$. This is a contradiction, thus the \textbf{Case 2} cannot hold. The proof is completed.
\end{proof}
\begin{corollary}
	The conclusion of Theorem \ref{cxmodel} is violated if $u$ is a supersolution. For example, take \\$u(x)=1-e^{-|x|^2}$, $a(x)=a$, and $m\in (0,2-\beta]$ where $a$ is a positive constant chosen later. A direct calculation reveals that
	\begin{align*}
		&\Delta_{\infty}^{\beta}u=2^{3-\beta}|x|^{2-\beta}e^{(\beta-3)|x|^2}(1-2|x|^2),\\
		&a(x)(|x|+1)^{\alpha}|\nabla u|^m=a(x)(|x|+1)^{\alpha}2^m|x|^me^{-m|x|^2}.
	\end{align*}
	For $x=0$, we easily arrive at 
	\begin{align}\label{14*}
		\Delta_{\infty}^{\beta}u(0)\le a(0)(|0|+1)^{\alpha}|\nabla u(0)|^m+\lambda |u(0)|^{\gamma}e^{u(0)}.
	\end{align}
	For $|x|\geq \dfrac{\sqrt{2}}{2}$, we obtain $\Delta_{\infty}^{\beta}u\le 0$, thus
	\begin{align}\label{14}
		\Delta_{\infty}^{\beta}u\le a(x)(|x|+1)^{\alpha}|\nabla u|^m+\lambda|u|^{\gamma}e^u,\text{ for }|x|\geq\dfrac{\sqrt{2}}{2}.
	\end{align}
	For $|x|\le \dfrac{\sqrt{2}}{2},x\neq 0$, we derive the following estimates
	\begin{align*}
		\dfrac{\Delta_{\infty}^{\beta}u}{a(x)(|x|+1)^{\alpha}+\lambda|u|^{\gamma}e^u}\le \dfrac{\Delta_{\infty}^{\beta}u}{a(x)(|x|+1)^{\alpha}|\nabla u|^m}&=\dfrac{2^{3-\beta-m}}{a}(|x|+1)^{-\alpha}|x|^{2-\beta-m}e^{(m+\beta-3)|x|^2}(1-2|x|^2)\\
		&\le \dfrac{2^{3-\beta-m}}{a}2^{-\alpha}=\dfrac{2^{3-\beta-m-\alpha}}{a}.
	\end{align*}
	Choose $a=2^{3-\beta-m-\alpha}$, we readily obtain
	\begin{align}\label{15}
		\Delta_{\infty}^{\beta}u\le a(x)(|x|+1)^{\alpha}|\nabla u|^m+\lambda|u|^{\gamma}e^u,\text{ for }|x|\le \dfrac{\sqrt{2}}{2} \text{ and }x\neq 0.
	\end{align}
	From \eqref{14*}, \eqref{14}, and \eqref{15}, we conclude
	\begin{align*}
		\Delta_{\infty}^{\beta}u\le a(x)(|x|+1)^{\alpha}|\nabla u|^m+\lambda|u|^{\gamma}e^u,\text{ for }x\in\mathbb R^n.
	\end{align*}
	However, a straightforward calculation demonstrates that
	\begin{align*}
		\lim_{R\to\infty}{\dfrac{\mathrm{osc}_{B_R}u}{R}}=\lim_{R\to\infty}{\dfrac{1-e^{-R^2}}{R}}=0.
	\end{align*}
\end{corollary}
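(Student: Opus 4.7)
The plan is simply to verify the three ingredients required to confirm that $u(x)=1-e^{-|x|^2}$, paired with the constant coefficient $a(x)\equiv a=2^{3-\beta-m-\alpha}$, furnishes a genuine counterexample: (i) $u$ is a classical (hence viscosity) supersolution of the inequality on all of $\mathbb R^n$; (ii) $u$ obeys the oscillation condition \eqref{e}; (iii) $u$ is not constant. Since $u\in C^{\infty}(\mathbb R^n)$, no jet argument is needed; the verification is pointwise.

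First I would compute the relevant derivatives. Because $u$ is radial with $u(x)=h(|x|)$ where $h(r)=1-e^{-r^2}$, one has $h'(r)=2re^{-r^2}$ and $h''(r)=(2-4r^2)e^{-r^2}$, so $|\nabla u|=2|x|e^{-|x|^2}$ and by the radial identity $\Delta_\infty^\beta u=(h'(r))^{2-\beta}h''(r)$ valid for $x\neq 0$, I get
\begin{align*}
\Delta_\infty^\beta u(x)=2^{3-\beta}|x|^{2-\beta}\,(1-2|x|^2)\,e^{-(3-\beta)|x|^2},\quad x\neq 0,
\end{align*}
and one checks directly that $\Delta_\infty^\beta u(0)=0$ (here the convention $0^{2-\beta}=0$ is harmless since $m\le 2-\beta$). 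This matches the expressions recorded in the statement.

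Next I would establish the supersolution inequality by splitting into three regions. At $x=0$ every term vanishes, so \eqref{14*} is trivial. For $|x|\ge\sqrt{2}/2$ the factor $(1-2|x|^2)$ is non-positive, so $\Delta_\infty^\beta u\le 0$, while the right-hand side is non-negative since $a>0$, $(|x|+1)^{\alpha}>0$, and $\lambda|u|^{\gamma}e^{u}\ge 0$; this yields \eqref{14}. The only non-trivial range is $0<|x|<\sqrt{2}/2$, where I would simply drop the nonnegative term $\lambda|u|^\gamma e^u$ and estimate the ratio
\begin{align*}
\frac{\Delta_\infty^\beta u(x)}{a(|x|+1)^{\alpha}|\nabla u|^m}=\frac{2^{3-\beta-m}}{a}\,(|x|+1)^{-\alpha}\,|x|^{2-\beta-m}\,(1-2|x|^2)\,e^{(m+\beta-3)|x|^2}.
\end{align*}
The exponent $m+\beta-3<0$ (since $m\le 2-\beta$) makes the exponential factor $\le 1$; the factor $(1-2|x|^2)\le 1$; since $2-\beta-m\ge 0$ and $|x|<\sqrt{2}/2<1$ we have $|x|^{2-\beta-m}\le 1$; and since $-\alpha\in[0,1)$ and $|x|+1<\sqrt{2}/2+1<2$, we have $(|x|+1)^{-\alpha}\le 2^{-\alpha}$. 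The product is therefore bounded by $2^{3-\beta-m-\alpha}/a$, which equals $1$ by the choice $a=2^{3-\beta-m-\alpha}$, giving \eqref{15}. Combining the three cases yields the supersolution inequality on $\mathbb R^n$.

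Finally I would verify the oscillation condition and non-constancy. Since $u$ is radial and increasing in $|x|$, $\sup_{B_R}u=1-e^{-R^2}$ and $\inf_{B_R}u=u(0)=0$, hence $\mathrm{osc}_{B_R}u/R=(1-e^{-R^2})/R\to 0$ as $R\to\infty$, which is \eqref{e}. Evidently $u$ is not constant. Thus the hypothesis of Theorem~\ref{cxmodel} on the growth of the oscillation is satisfied while its conclusion fails, confirming that upgrading ``solution'' to ``supersolution'' is necessary. There is no real obstacle here; the argument is a direct computation, and the only mildly delicate point is the $0<|x|<\sqrt{2}/2$ bookkeeping, which is absorbed into the choice of $a$.
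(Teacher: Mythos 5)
Your argument is essentially identical to the paper's: the same radial computation of $\Delta_\infty^\beta u$ and $|\nabla u|^m$, the same three-region split at $x=0$, $|x|\ge \sqrt{2}/2$, and $0<|x|<\sqrt{2}/2$, the same ratio estimate leading to the choice $a=2^{3-\beta-m-\alpha}$, and the same verification of the oscillation condition. The only difference is that you spell out the factor-by-factor bounds ($e^{(m+\beta-3)|x|^2}\le 1$, $|x|^{2-\beta-m}\le 1$, $(|x|+1)^{-\alpha}\le 2^{-\alpha}$) that the paper leaves implicit, which is fine; nothing further is needed.
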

\begin{corollary}
	The result of Theorem \ref{cxmodel} is still valid if $u$ is a subsolution with the similar proof as above.
\end{corollary}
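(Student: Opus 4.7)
My plan is to use the doubling method in the spirit of the authors' reference to \cite{AAE2025}, combined with a carefully chosen coupling function whose derivative has a favorable sign. The key observation that allows the strong exponential nonlinearity to be tolerated is that, since $\lambda|u|^\gamma e^u\ge 0$, when $u$ is tested as a viscosity subsolution we may simply \emph{drop} the exponential term and retain only the inequality
\begin{align*}
|p|^{-\beta}\langle Xp,p\rangle \;\ge\; a(x)(|x|+1)^{\alpha}|p|^{m}
\end{align*}
at any point where a smooth test function touches $u$ from above. The doubling machinery will then allow us to control the left-hand side by an expression that vanishes as $R\to\infty$, whereas the right-hand side is bounded below by $\inf_{\mathbb R^n}a>0$, giving a contradiction unless the coupling function itself has nonpositive maximum for large $R$.

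Concretely, I would set $L_R:=\operatorname{osc}_{B_R}u/R$ and $\kappa_R:=L_R/R$, and consider
\begin{align*}
\Phi_R(x,y)\;=\;u(x)-u(y)-\kappa_R\omega_R(|x-y|)-2\kappa_R\bigl(|x|^2+|y|^2\bigr),
\end{align*}
where $\omega_R(t)=-t+\vartheta t^2$ with $\vartheta=\vartheta(R)>0$ chosen small enough that $\omega_R'\in[-1,-1/2]$ and $\omega_R''\ge 0$ on $[0,R]$. The quadratic penalty forces any maximum of $\Phi_R$ on $\overline{B_R}\times\overline{B_R}$ to lie strictly inside, while the negative-derivative choice of $\omega_R$ is what will let me bound the doubling matrix in the direction of the test gradient. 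I would then split into two cases: either $\max\Phi_R\le 0$ for all sufficiently large $R$, in which case the inequality $u(x)-u(y)\le \kappa_R|x-y|^2+2\kappa_R(|x|^2+|y|^2)$ holds on shrinking balls and, since $\kappa_R\to 0$ by the hypothesis \eqref{e}, forces $u$ to be constant; or else there is a subsequence $R_i\to\infty$ along which $M_i:=\max\Phi_{R_i}>0$.

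In the second case, I would first verify that the maximum is attained at an interior point $(x_i,y_i)$ with $x_i\ne y_i$; the positivity $M_i>0$ together with $|u(x_i)-u(y_i)|\le\operatorname{osc}_{B_{R_i}}u$ gives $2\kappa_i|x_i|^2<2\operatorname{osc}_{B_{R_i}}u$, whence $|x_i|<R_i$ (similarly for $y_i$), and the cross term then separates the points. I would then invoke the Crandall--Ishii lemma to obtain $(p_i,X_i)\in\overline J^{2,+}_{B_{R_i}}u(x_i)$ and $(q_i,Y_i)\in\overline J^{2,-}_{B_{R_i}}u(y_i)$ with the matrix inequality against the Hessian of the penalty. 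Taking the quadratic form of this inequality against the vector $(p_i,0)$ and exploiting $\omega_R''\ge 0$ to absorb the unfavorable piece, I expect to arrive at $\langle X_ip_i,p_i\rangle \le C\kappa_i|p_i|^2$, while the trivial bounds $|p_i|\le\kappa_i+4\kappa_iR_i\lesssim L_i$ come directly from the form of $\omega_R$ and the quadratic weight. Plugging into the subsolution inequality (with the exponential term discarded as above) yields
\begin{align*}
\inf_{\mathbb R^n}a\;\le\;a(x_i)\;\le\;C\,\kappa_i\,|p_i|^{2-\beta-m}(|x_i|+1)^{-\alpha}\;\lesssim\;\frac{L_i^{3-\beta-m}}{R_i}\,R_i^{-\alpha},
\end{align*}
and since $-\alpha\ge 0$ and $L_i\to 0$, the right-hand side tends to $0$, contradicting $\inf a>0$.

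The main obstacle I anticipate is calibrating the coupling function. The quadratic cutoff $2\kappa_R(|x|^2+|y|^2)$ must be strong enough to keep maximizers in the interior yet weak enough not to swamp the argument; the perturbation $\omega_R$ must be designed so that its derivative has a definite sign in order to get a clean one-sided bound on the relevant quadratic form of $X_i$. The exponent constraint $m\le 2-\beta$ is what makes the exponent $2-\beta-m$ in the final estimate nonnegative, so that the bound $|p_i|\lesssim L_i\to 0$ actually helps rather than blows up; the constraint $\alpha\in(-1,0]$ is used in the very last step to ensure the factor $(|x_i|+1)^{-\alpha}\le (R_i+1)^{-\alpha}$ does not overwhelm the decay $R_i^{-1}$. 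Once the coupling function is chosen correctly, the rest should be mechanical.
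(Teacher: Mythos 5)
Your proposal is correct and follows essentially the same route as the paper: you replicate the doubling construction (same $\Phi_R$, same $\omega_R$, same two-case split, same Crandall--Ishii application and final contradiction), and you correctly identify that the proof of Theorem~\ref{cxmodel} only ever invokes the subsolution inequality at the maximizer $x_i$, discarding the nonnegative exponential term $\lambda|u|^\gamma e^u$, so it transfers verbatim to subsolutions. Your remark making this discard explicit is exactly the observation the paper leaves implicit when it passes to its inequality \eqref{6.2}.
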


\textbf{Acknowledgment.}
The authors gratefully acknowledge the support of the Vietnam Institute for Advanced Study in Mathematics (VIASM)
through the program \textit{Summer School ``Research Experience for Undergraduates'' 2025}.

\end{document}